\documentclass[11pt]{article}

\setlength{\textwidth}{16cm}
\setlength{\textheight}{23cm}
\setlength{\topmargin}{-1cm}
\setlength{\oddsidemargin}{-1mm}
\setlength{\evensidemargin}{-1mm}
\raggedbottom

\usepackage{amsmath}
\usepackage{amsmath}
\usepackage{amsthm}
\usepackage{amsfonts}
\usepackage{amssymb}
\usepackage{color}
\usepackage{graphicx}
\usepackage[colorlinks=true,linkcolor=blue,urlcolor=blue,citecolor=red, hyperfigures=false]{hyperref}
\usepackage{dsfont}
\usepackage{enumerate}

\let\inf\relax \DeclareMathOperator*\inf{\vphantom{p}inf}
\let\max\relax \DeclareMathOperator*\max{\vphantom{p}max}
\let\min\relax \DeclareMathOperator*\min{\vphantom{p}min}

\newcommand{\N}{\mathbb N}


\newcommand{\Q}{\mathbb Q}

\newcommand{\R}{\mathbb R}


\newcommand{\E}{\mathbb E}

\def\P{\mathbb P}
\def\F{\mathbb F}

\newcommand{\be}{\begin{equation}}
\newcommand{\ee}{\end{equation}}
\def\1{{\bf 1}}

\bibliographystyle{plain}

\def\Dt0{{\bf D}(t_0)}
\def\u{{\bf u}}
\def\v{{\bf v}}

\def\Int{\displaystyle\int}

\def\to{\rightarrow}

\def\pa{\partial}

\def\t{\tau}

\newcommand{\ind}{{\mathds{1}}}

\definecolor{ProcessBlue}{cmyk}{1,0,0,0.40}
\definecolor{Pinegreen}{cmyk}{1,0,1,0.60}

\newcommand{\ba}{\[\begin{array}{rl}}
\newcommand{\ea}{\end{array}\]}
\newcommand{\bea}{\begin{eqnarray}}
\newcommand{\eea}{\end{eqnarray}}
\newcommand{\beaa}{\begin{eqnarray*}}
\newcommand{\eeaa}{\end{eqnarray*}}
\numberwithin{equation}{section}
\numberwithin{figure}{section}
\theoremstyle{plain}
\newtheorem{Theorem}{Theorem}[section]
\newtheorem{Definition}[Theorem]{Definition}
\newtheorem{Proposition}[Theorem]{Proposition}
\newtheorem{Lemma}[Theorem]{Lemma}

\newtheorem{Remark}[Theorem]{Remark}


\newcommand{\AR}{{\cal A}}
\newcommand{\BR}{{\cal B}}

\newcommand{\FR}{{\cal F}}

\newcommand{\LR}{{\cal L}}

\newcommand{\SR}{{\cal S}}

\newcommand{\UR}{{\cal U}}
\newcommand{\VR}{{\cal V}}

\newcommand{\noi}{\noindent}

\newcommand{\iNt}{\mbox{Int}}
\newcommand{\tq}{\, |\,}

\def\pa{\vskip4truept \noindent}

\begin{document}

\title{ A probabilistic representation for the value of zero-sum differential games with incomplete information on both sides}

\author{Fabien Gensbittel$^{(1)}$ and Catherine Rainer$^{(2)}$\\
$\;$\\
\small {(1)} Toulouse School of Economics, University of Toulouse Capitole\\
\small {(2)} Universit\'{e} de Bretagne Occidentale, 6, avenue Victor-le-Gorgeu, B.P. 809, 29285 Brest cedex, France\\
 \small e-mail : 
Fabien.Gensbittel@ut-capitole.fr, Catherine.Rainer@univ-brest.fr}
  \maketitle

\noindent {Abstract : We prove that for a class of zero-sum differential games with incomplete information on both sides, the value admits a probabilistic representation as the value of a zero-sum stochastic differential game with complete information, where both players control a continuous martingale. A similar representation as a control problem over discontinuous martingales was known for games with incomplete information on one side (see Cardaliaguet-Rainer \cite{cr2}), and our result is a continuous-time analog of the so called splitting-game introduced in Laraki \cite{laraki} and Sorin \cite{sorin} in order to analyze discrete-time models. It was proved by Cardaliaguet \cite{c1,c2} that the value of the games we consider is the unique solution of some Hamilton-Jacobi equation with convexity constraints. Our result provides therefore a new probabilistic representation for solutions of  Hamilton-Jacobi equations with convexity constraints as values of stochastic differential games with unbounded control spaces and unbounded volatility.

}
\vspace{3mm}

\noindent{\bf Key-words : Zero-sum continuous-time game, incomplete information, Hamilton-Jacobi equations, stochastic differential game} 
\vspace{3mm}

\noindent{\bf A.M.S. classification :} 91A05, 91A23, 60G60 
\vspace{3mm}

\section{Introduction.}

In his seminal paper \cite{c1}, Cardaliaguet  introduced a class of zero-sum differential games where each of the two players has a private partial information on the payoff of the game. He showed in \cite{c2} that the value of these games is  the unique solution of some Hamilton-Jacobi-Isaacs (HJI)-equation with convexity constraints. In case only one player has a private information and there are no dynamics, it was established in Cardaliaguet-Rainer  \cite{cr2} that the game has an interpretation in terms of a control problem with complete information over a set of continuous time martingales: these martingales translate how the informed player manages his private information.\\
In the present work, we investigate a game where both players have private information. We show that, in this case, there is an interpretation of the game in terms of a game with complete information and martingale controls. Our model is therefore a continuous-time analog of the so-called ``splitting game'' studied in Laraki \cite{laraki} and Sorin \cite{sorin} (see also De Meyer \cite{demeyergeb} and Gensbittel \cite{gensbittelcavu} for a similar representation formula for the value of repeated games with incomplete information on one side).
But we profit here to go a step further: in opposition to \cite{cr2}, where the controls are merely discontinuous martingales, we prove that the value of the deterministic game with incomplete information is equal to the value of a stochastic differential game (SDG), i.e. that  an -almost- standard Brownian setting is an efficient framework for the analysis of asymmetric information.\\
This way we provide a probabilistic representation of an HJI-equation with convexity constraints of the type introduced in \cite{c2}. The presence of these convexity constraints is strongly linked with the fact that in the class of zero-sum SDG we consider, the control sets and the  volatility map are unbounded. 
A further contribution is the notion of simple pathwise strategies, which provides a less technical alternative to the definition proposed in Cardaliaguet-Rainer \cite{cr3} 

\pa
The introduction is divided in several subsections describing the framework and the known results concerning the initial deterministic game problem, our results and our definition of strategies together with a discussion on the relationships with the existing literature.

\subsection{Continuous time games with incomplete information}\label{introjeuxdiff}
Differential games with incomplete information were introduced by Cardaliaguet in \cite{c1}, as a continuous-time analog to the model of repeated games with incomplete information studied by Aumann and Maschler  in the sixties (see \cite{AumannMaschler} for a re-edition of their work) and by many authors since then (see e.g. Mertens-Zamir \cite{mertenszamir}, Laraki \cite{laraki}, De Meyer-Rosenberg \cite{demeyerrosenberg}, Cardaliaguet-Laraki-Sorin \cite{cls}, Gensbittel \cite{gensbittelcavu} and  Laraki-Sorin \cite{larakisorin} for more references). Let us briefly describe the game whose value equals the value of the zero-sum SDG studied in the present work. \\
We fix two finite sets of indices $I$ and $J$. At the beginning of the game, a pair $(i,j)  \in I\times J$ is chosen at random according to a product distribution $p\otimes q$. Here, $p$ is a probability on the set $I$ and can be assimilated to an element of the simplex $\Delta(I)$ in $\R^{|I|}$ (resp. $q\in\Delta(J)$ a probability on $J$). Player 1 (the minimizing player) is only informed of $i$ while player 2 (the maximizing player) is only informed of $j$. The sets of controls $K$ and $L$ are compact and metric.
The game has a finite time horizon $T$ and an integral payoff. The payoff function $f_{ij}: [0,T]\times K\times L \rightarrow [0,1]$ depends on the chosen pair $(i,j)$. Player 1 is allowed to choose a family of random control processes $k=(k^i)_{i \in I}$. 
 Similarly, player 2 is allowed to choose a family of random controls $\ell=(\ell^j)_{j \in J}$. The expected payoff is defined as 
\[ \sum_{i,j} p_i q_j \E_{\P_1\otimes\P_2} \left[\int_t^T f_{ij}(s,k^i_s,\ell^j_s)ds\right],\]
where $\P_1$ (resp. $\P_2$) is the probability on an auxiliary probability space which is used as a randomization device for Player 1 (resp. Player 2).
The notion of strategies used in \cite{c1} is that of random strategies with delay (see \cite{c1} or \cite{cr2} for a precise definition), and in order for the value to exist, one has to assume the following Isaacs' condition 
\begin{equation}\label{isaacsdgii}
 H(t,p,q):= \sup_k \inf_\ell \sum_{i,j} p_i q_j f_{i,j}(t,k,\ell)=  \inf_\ell \sup_k \sum_{i,j} p_i q_j f_{i,j}(t,k,\ell).
\end{equation}
In Cardaliaguet \cite{c2}, it is shown that the value of this game is 
 the unique Lipschitz continuous viscosity solution of the following equation :
\begin{equation}
\label{edpintro}
\min\left\{\max\left\{ -\frac{\partial V}{\partial t} (t,p,q)-H(t,p,q);-\lambda_{min}(p,D^2_p V(t,p,q))\right\};-\lambda_{max}(q,D^2_q V(t,p,q))\right\}=0,
\end{equation}
with terminal condition $V(T,p,q)=0$. Here the notation $\lambda_{min}(p,D^2_p V(t,p,q))$ stands for the smallest eigenvalue of the second derivative of $V$ with respect to $p$, relative to the tangent space of $\Delta(I)$ at $p$ (see section \ref{model} for the precise definition). Similarly, $\lambda_{max}(q,D^2_q V(t,p,q))$ stands for the largest eigenvalue of  the second derivative of $V$ with respect to $q$, relative to the tangent space of $\Delta(J)$ at $q$. 
The model studied in \cite{c1} led to several generalizations by Cardaliaguet-Rainer \cite{cr1}, Gr\"un \cite{gruen}, Oliu-Barton \cite{oliu}, Gensbittel \cite{gensbittel} and Cardaliaguet-Rainer-Rosenberg-Vieille \cite{crrv}.
 A particular case is deepened in \cite{cr2}: 
when $J$ is reduced to a singleton (i.e. Player 1 has full information), the equation becomes
\begin{equation}
\label{edpintro2}
\max\left\{ -\frac{\partial V}{\partial t} (t,p)-H(t,p);-\lambda_{min}(p,D^2_p V(t,p))\right\}=0,
\end{equation}
with terminal condition $V(T,p)=0$. 
It is then shown that its solution can be represented as
\begin{equation}\label{control2}
 V(t,p)=\min_{(p_s)_{s \in [t,T]}} \E[ \int_t^T H(s,p_s)ds ],
\end{equation}
where the minimum is over the set of (laws of) c\`{a}dl\`{a}g martingales living in the simplex $\Delta(I)$. \\
The martingales $(p_s)_{s \in [t,T]}$ in the control problem (\ref{control2}) can be interpreted as the information on the index $i$ Player 1 discloses over time, and one may derive an optimal strategy for this informed Player from the optimal martingale in (\ref{control2}).

\subsection{Main contributions of the paper}

Our main result is that the unique solution $V$ of \eqref{edpintro} is the value of a standard  SDG (i.e. with complete information) with unbounded control sets and unbounded volatility, providing therefore an alternative probabilistic representation for the value function of differential games with incomplete information. 
In this game, the control sets  $U,V$ are respectively the sets of square matrices of size $|I|$ and $|J|$. Let $(B_s)=(B^1_s,B^2_s)$ denote  a standard Brownian motion with values in $\R^{|I|}\times \R^{|J|}$ defined on the canonical space endowed with the augmented canonical filtration. For fixed $(t,p,q)\in[0,T]\times\Delta(I)\times \Delta(J)$ and a pair of progressively measurable controls $(u_s,v_s)$, we consider the following controlled stochastic differential equations (SDE):
\begin{align}
\label{sdeXY}
X^{t,p,u}_s=p+\int_t^s\sigma(X^{t,p,u}_r,u_r)dB^1_r,\; s\in[t,T], \\
Y^{t,q,v}_s=q+\int_t^s\tau(Y^{t,q,v}_r,v_r)dB^2_r,\; s\in[t,T],
\end{align}
where for all $(x,u)\in\Delta(I)\times U$, $\sigma(x,u) \in\R^{|I\times I|}$ is the orthogonal projection of $u$ on the tangent space of $\Delta(I)$ at $x$, $T_x(\Delta(I))$. Similarly, for all $(y,v)\in\Delta(J)\times V$, $\tau(y,v)\in\R^{|J\times J|}$  denotes the orthogonal projection of $v$ on the tangent space of $\Delta(J)$ at $y$. Roughly speaking, each player controls the variance of his own martingale, $X$ for player $1$ and $Y$ for player $2$, and the projections ensure that the martingales remain respectively in the simplices $\Delta(I)$ and $\Delta(J)$.
\pa 
The payoff of our game is defined by
\[ J(t,p,q,u,v):=\E_t\left[\Int_t^T H(s,X^{t,p,u}_s,Y^{t,q,v}_s)ds\right],\]
where Player 1 plays $u$ and wants to minimize $J(t,p,q,u,v)$ while Player 2 plays $v$ and wants to maximize it. The players use simple pathwise strategies, where simple means here that controls are piecewise-constant on intervals with rational endpoints, and pathwise that each player reacts to the realization of the control of his opponent and not to the full control (see the discussion in subsection \ref{introstrategies}). This notion of strategies allows to define the lower and upper value functions
\[ V^-(t,p,q):=\sup_{\beta}\inf_{\alpha}J(t,p,q,u^{\alpha,\beta},v^{\alpha,\beta}),\qquad  V^+(t,p,q)=\inf_{\alpha}\sup_{\beta}J(t,p,q,u^{\alpha,\beta},v^{\alpha,\beta}) ,\]
where $\alpha,\beta$ range through the set of simple pathwise strategies and $(u^{\alpha,\beta},v^{\alpha,\beta})$ denotes the unique pair of controls induced by the pair of strategies $(\alpha,\beta)$ (see Lemma \ref{fixedpoint}). With such a definition, the inequality $V^-\leq V^+$ is immediate, and we only have to prove the reverse inequality in order to prove that this game admits a value $V=V^-=V^+$. This is the content of Theorem \ref{maintheorem}. 

\pa As a corollary, if $J$ is reduced to a singleton, we find a new stochastic representation for the solution of (\ref{edpintro2}):
\begin{equation}\label{control1}
 V(t,p)=\inf_{u} \E[ \int_t^T H(s,X^{t,u,p}_s)ds ]. 
\end{equation}

One has to compare this result with the representation \eqref{control2}.
 Although we did not follow this approach, one can see \eqref{control2} as the relaxed version  of \eqref{control1} for some sufficiently weak topology. An important advantage of  \eqref{control2} is the existence of an infimum, which is not true in general for \eqref{control1} since the optimal martingales in \eqref{control2} are typically purely discontinuous (see the examples in \cite{cr2}). However, the formulation \eqref{control1} introduced in the present work allowed us to extend this probabilistic representation from the stochastic control case to the zero-sum game case and to relate these results with the classical theories of stochastic control and zero-sum SDG.\\ Let us also mention that a control problem similar to \eqref{control1} with a cost depending on the volatility of the martingale and arising from the asymptotic analysis of repeated games with incomplete information was analyzed in Gensbittel \cite{gensbittel}.

\subsection{About control problems with unbounded control spaces}

Control problems with square integrable controls (possibly unbounded) are considered for example in Krylov \cite{krylov} (see also his recent papers on games, e.g. \cite{krylovDPP}) and Touzi \cite{touzi}. More recently zero-sum SDG with unbounded controls and unbounded volatility have been studied  by Bayraktar-Yao \cite{BY}. However, none of these references deal with the associated equation with convexity constraints which characterizes the value of the zero-sum SDG we are studying. Indeed, if we formally write the standard upper and lower  Hamilton-Jacobi-Isaacs (HJI) equations in our model, assuming that the classical result applies, $V^-$ should be a viscosity supersolution of
\begin{equation}\label{supsolintro}
 -\frac{\partial V^-}{\partial t} (t,p,q)-H(t,p,q)- \sup_v \inf_u \frac{1}{2} Tr(M_{u,v} D^2V^-(t,p,q)) \geq 0,
\end{equation}
with $M_{u,v} = \begin{pmatrix} \sigma(p,u) \sigma(p,u)^t & 0 \\ 0 & \sigma(q,v) \sigma(q,v)^t \end{pmatrix}$, and similarly, $V^+$
should be a viscosity subsolution of
\begin{equation}\label{subsolintro}
-\frac{\partial V^+}{\partial t} (t,p,q)-H(t,p,q)-  \inf_u \sup_v \frac{1}{2} Tr(M_{u,v} D^2V^+(t,p,q)) \leq 0.
\end{equation}
Given a symmetric matrix $S$, the classical Isaacs' condition: 
\begin{equation}\label{isaacs}
 H(t,p,q)+\sup_u \inf_v \frac{1}{2} Tr(M_{u,v} S) = H(t,p,q)+\inf_v \sup_u \frac{1}{2} Tr(M_{u,v} S),
\end{equation}
does not always hold. Precisely, Isaac's condition holds with both sides of \eqref{isaacs} being finite if and only if 
\[S= \begin{pmatrix} S_1 & * \\ * & S_2 \end{pmatrix},\] 
with $\lambda_{\min}(p,S_1)\geq 0$ and $\lambda_{\max}(q,S_2) \leq 0$, and in this case both sides of \eqref{isaacs} are equal to $H(t,p,q)$. If $S$ does not fulfill these constraints, then both sides of \eqref{isaacs} are infinite and if $\lambda_{\min}(p,S_1)< 0$ and $\lambda_{\max}(q,S_2) > 0$, then the left-hand side of \eqref{isaacs} is $-\infty$ whereas the right-hand side is $+\infty$, so that \eqref{isaacs} does not hold even in a generalized sense.  
\pa
The variational characterization of the value function for our zero-sum SDG is -to our knowledge- not covered by any result in the  literature on SDG, in particular, one may not apply directly the results of Bayraktar-Yao \cite{BY} in the present context since the assumptions called $(A-u),(A-v)$ in \cite{BY} do not hold in the present model. 
Actually, we claim that the HJI equations (\ref{supsolintro},\ref{subsolintro}) do not hold in general and thus do not characterize the value function of our problem. This indicates that \eqref{edpintro} cannot be rewritten as a classical HJI equation, and is structurally different. This claim is proved rigorously through a very simple example detailed in subsection \ref{introconvex} below.


\subsection{The simple example of the convex envelope}\label{introconvex}
In order to give an easy and explicit illustration of our result, let us further simplify the model by assuming that   $J$ is reduced to a singleton and that $H$ does not depend on time.  In this case equation \eqref{edpintro2} becomes
\begin{equation}
\label{edpintro3}
\max\left\{ -\frac{\partial V}{\partial t} (t,p)-H(p);-\lambda_{min}(p,D^2_p V(t,p))\right\}=0.
\end{equation}
The unique solution is $V(t,p)= (T-t) Vex(H)(p)$, where $Vex(H)$ is the convex envelope of $H$, i.e. the largest convex function $f$ defined on $\Delta(I)$ such that $f \leq H$ (see \cite{c2} and \cite{cr2} for a detailed proof). 
Our result implies that $V$ is the value of the stochastic control problem with unbounded volatility \eqref{control1}, and thus we have:
\begin{equation}\label{control3}
(T-t) Vex(H)(p)=\inf_{u} \E[ \int_t^T H(X^{t,u,p}_s)ds ]. 
\end{equation}
This kind of representation for the convex envelope is not surprising at all and was probably already noticed by several authors (see e.g. \cite{cr2}, but also \cite{touzi} for a quite similar formulation with terminal cost). Moreover, a direct proof of \eqref{control3} is not difficult to obtain. However, note that this representation differs from the one suggested in Oberman \cite{oberman} for the convex envelope, which was a control problem with stopping and bounded volatility (let us also mention that convergence in long time to the convex envelope for stochastic control problems with bounded volatility was studied in \cite{carlier}). This example will serve us to show very easily that the function $V$ is not characterized through the following Hamilton-Jacobi equation that one could naively expect: 
\begin{equation}\label{solintro2}
- \frac{\partial V}{\partial t} (t,p)-H(p)- \inf_u \frac{1}{2} Tr(\sigma(p,u)\sigma(p,u)^t D^2V(t,p)) = 0.
\end{equation}
Indeed, $V$ is a subsolution of \eqref{solintro2} since for any test function $\phi \geq V$ such that $V(t,p)=\phi(t,p)$, the convexity of $V$ implies $\lambda_{\min}(p,D^2\phi(t,p)) \geq 0$, and thus
\[
 -\frac{\partial \phi}{\partial t} (t,p)-H(p)- \inf_u \frac{1}{2} Tr(\sigma(p,u)\sigma(p,u)^t D^2\phi(t,p)) = -Vex(H)(p)-H(p) \leq 0.
\]
However, one can see through a simple example that $V$ is not a supersolution of \eqref{solintro2}. Let $I=\{1,2\}$ and define $H(p)= \frac{1}{2} - |p-p_0|$ where $p_0=(\frac{1}{2},\frac{1}{2})$, so that $H$ is strictly concave, positive for $p$ in the relative interior of $\Delta(I)$ and equal to $0$ at points $(0,1)$ and $(1,0)$. It follows easily that $Vex(H)=0$ and one may thus use $\phi\equiv 0$   as a test function such that $\phi(t,p)\leq (T-t)Vex(H)(p)$ and $\phi(0,p_0)=V(0,p_0)$. We obtain:
\[
 -\frac{\partial \phi}{\partial t} (t,p_0)-H(p_0)- \inf_u \frac{1}{2} Tr(\sigma(p_0,u)\sigma(p_0,u)^t D^2\phi(t,p_0)) = -H(p_0) <0,
\]
which shows that $Vex(H)$ is not a supersolution. This very simple example shows that this kind of control problems with unbounded variance cannot be analyzed using classical Hamilton-Jacobi equations, but requires to consider equations with convexity constraints as \eqref{edpintro}.

\subsection{Simple pathwise strategies}\label{introstrategies}

The second contribution of this paper is to prove existence of the value for particular games with unbounded controls and unbounded volatility, having discontinuous coefficients, using simple pathwise strategies. The importance of using pathwise strategies was already outlined in Cardaliaguet-Rainer \cite{cr3}. Recall that the standard definition of a strategy introduced in Fleming-Souganidis \cite{FlemingSouganidis} (see also the definition in Buckdahn-Li \cite{BuckdahnLi} and in most of the papers on SDG) requires a player to react to the full control of his opponent. By full control, we mean the map $v(.)$ which associates to $\omega$ the control $v(\omega)$ (as we work on the canonical space here, $\omega$ denotes the Brownian trajectory). In many game modeling situations, one cannot require for a player to know what his opponent would have decided if another state of the world $\omega$ had occurred. Pathwise strategies are strategies which depend only on the actual realization $v(\omega)$ of the control. However, pathwise strategies introduced in Cardaliaguet-Rainer were universally measurable maps and their construction relied on the construction of Nutz \cite{Nutz}, hence on the axiom of the continuum (actually only on the weaker assumption that there exists a medial limit in the sense of Mokobodzki). The reason for this technical definition was the need of a pathwise version of the stochastic integral, in order for the players to be able to play simple feedback strategies. 

We chose to work here with a simpler definition, to avoid very technical measurability issues arising when trying to prove a dynamic programming principle. By requiring the players to play piecewise-constant controls on some intervals with rational endpoints, the controlled stochastic differential equations we consider are defined pathwise, which allow us to consider simple feedback strategies (actually quite close to the feedback strategies used in Pham-Zhang \cite{pham}), and also to use a measurable selection result.

\subsection{Open questions}

We are interested in several developments. As in the framework of repeated games developed by Laraki \cite{laraki}, replacing the simplices $\Delta(I)$ and $\Delta(J)$ by some compact convex sets $C,D$ give rise to a more general splitting game. On the other hand, in \cite{crrv} and \cite{gensbittel} the value of a continuous-time Markov game with incomplete information satisfies the following PDE with obstacles:
\[  \min\{ \max\{ -\frac{\partial V}{\partial t} -\LR(V)- u \,;\, -\lambda_{\min}(D^2_p V)\} \,;\, -\lambda_{\max}(D^2_q V) \}=0, \;  \]\\
Here the convexity constraints are the same as in the case we consider in the present paper, but the PDE is different. In particular it includes a drift term.
More generally, it is likely that, looking on more involved models, we have to appeal to the theory of viability. Indeed, the stochastic problem we introduce here can be seen as a game problem under constraints. However, the particular structure of the constraints (with affine borders) and of the dynamics (without drift term)  permits a very specific approach. Relaxing both assumptions, on the constraints and on the dynamics, this explicit treatment will not be possible anymore.

\section{The model}\label{model}

\subsection{Notations}

Let $T>0$ be a deterministic terminal time, and $I,J$ be
two non-empty finite sets which we will identify with the sets $\{ 1,\ldots,|I|\}$ and $\{1,...,|J|\}$ respectively. Let  $\Delta(I)=\{p \in \R^{|I|} \,|\, \forall i \in I, p_i \geq 0, \, \sum_{i \in I}p_i=1\}$ and $\Delta(J)$ denote the associated simplices. 

For all $0\leq t < t'\leq T$, we consider the {\bf Wiener space} $\Omega_{t,t'}=\{ \omega:[t,t']\rightarrow \R^{|I\times J|}$  continuous s.t. $\omega(t)=0\}$ endowed with the topology of uniform convergence and the associated Borel $\sigma$-algebra $\FR_{t,t'}$. The
Wiener measure on $\Omega_{t,t'}$ under which the canonical process $(B^{t,t'}(s,\omega)=\omega(s), s\in[t,t'])$ is a standard Brownian motion, is denoted by $\P_{t,t'}$. \\
Let $\F_{t,t'}^0=(\FR^{0,t'}_{t,s})_{s\in[t,t']}$ be the filtration generated by the coordinate process on $\Omega_{t,t'}$. 
We denote by $\F_{t,t'}=(\FR^{t'}_{t,s})_{s\in [t,t']}$ 
the smallest right-continuous filtration with respect to which the coordinate process is adapted and which contains all negligible sets for $\P_{t,t'}$. 
In the sequel, we have to decompose the canonical process $(B_s(\omega):=\omega(s),s\in[t,T])$ on $\Omega_t$ into $B_s=(B^{1}_s,B^{2}_s)$, where $B^{1}$ and $B^{2}$ are two independent Brownian motions with values in $\R^{|I|}$ (resp. $\R^{|J|}$). 

 The {\bf control spaces} are here  $U=\R^{|I\times I|}$, $V=\R^{|J\times J|}$. $U$ and $V$ are seen as spaces of matrices. 
On each euclidean space $\R^n$, we can define a bounded distance  by $d_b(x,y)=\frac{|x-y|}{1+|x-y|}$, topologically equivalent to the usual Euclidean distance. 
Let $U_{t,t'},V_{t,t'}$ be the sets of equivalence classes (with respect to the Lebesgue measure) of measurable maps from $[t,t']$ to $U,V$, endowed with the topology of convergence in measure. Note that this topology is metricized by the distance $d_1(\u,\u')=\int_{[t,t']} d_b(\u(s),\u'(s))ds$. 
Further let us introduce the set
 $\UR(t,t')$ of $\F_{t,t'}$ progressively measurable processes on $\Omega_{t,t'}$ taking values in $U$ and such that $\P_{t,t'}[ \int_{t}^{t'} |u_s|^2 ds <\infty]=1$, and the set $\VR(t,t')$ of $\F_{t,t'}$ progressively measurable processes on $\Omega_{t,t'}$ taking values in $V$ and such that $\P_{t,t'}[ \int_{t}^{t'} |v_s|^2 ds <\infty]=1$. 
In all these notations, we drop the index $t'$ if $t'=T$.

Finally, for two functions $\phi,\phi'$ from $[t,t']$ into a same space, we write $\phi\equiv \phi'$ on $[t,t']$, if $\phi(s)=\phi'(s)$ for Lebesgue-almost all $s\in[t,t']$.

\subsection{The stochastic differential equations}

\pa
\noi For fixed $(t,p)\in[0,T]\times\Delta(I)$ and $u\in\UR(t)$, we consider the following SDE
\begin{equation}
\label{sdeX}
X^{t,p,u}_s=p+\int_t^s\sigma(X^{t,p,u}_r,u_r)dB^1_r,\; s\in[t,T],
\end{equation}
where, for all $(x,u)\in\Delta(I)\times U$, $\sigma(x,u):= P_x u \in\R^{|I\times I|}$ where $P_x$ denotes the orthogonal projection on the tangent space of $\Delta(I)$ at $x$, $T_x(\Delta(I))$.
\pa
In the same way we introduce for $(t,q)\in[0,T]\times\Delta(J)$ and $v\in\VR(t)$ the SDE
\begin{equation}
\label{sdeY}
Y^{t,q,v}_s=q+\int_t^s\tau(Y^{t,q,v}_r,v_r)dB^2_r,\; s\in[t,T],
\end{equation}
where, for all $(y,v)\in\Delta(J)\times V$, $\tau(y,v)=P_y v \in\R^{|J\times J|}$ where $P_y$ denotes the orthogonal projection on the tangent space of $\Delta(J)$ at $y$.

\begin{Remark} 
Let us describe the structure of the tangent spaces of $\Delta(I)$.
For $p\in\Delta(I)$, define the support of $p$, $S(p):=\{i \in I \tq p_i>0\}$ and its complementary $E(p)=\{ i\in I \tq p_i=0 \}$. Then the tangent space $T_p(\Delta(I))$ depends only on $S(p)$. More precisely, 
it holds that 
\begin{equation}
\label{Tp}
 T_p(\Delta(I))=\{  y \in\R^{|I|}\tq \sum_{i\in I} y_i=0 \mbox{ and } \forall i\in E(p), y_i=0\}.
 \end{equation}
A useful consequence is that, if for $p,p'\in\Delta(I)$ we have $S(p)=S(p')$, then $T_p(\Delta(I))=T_{p'}(\Delta(I))$.
Furthermore we have an explicit formula for the orthogonal projection on the tangent space: 
For all vector $y\in\R^{|I|}$, 
\begin{equation}
\label{proj}
 (P_{p}y)_i= 
\left\{ \begin{matrix} 0 & \text{if} & i \notin S(p) \\ 
y_i - \frac{1}{|S(p)|} \sum_{i' \in S(p)} y_{i'} & \text{if} & i \in S(p)  \end{matrix}  \right.
\end{equation}
For any $I'\subset I$ and any $p\in\Delta(I)$ such that $S(p)=I'$, we write $P_{I'}$ for $P_p$.

\end{Remark}
\pa
Throughout the proofs, we use the convention $\inf\emptyset=+\infty$ when defining stopping times, although the time interval is $[t,T]$.
\pa

\begin{Proposition}
\label{prop1}
The SDE's (\ref{sdeX}) and (\ref{sdeY}) have unique strong solutions such that, $\P_t$-a.s. for all $s\in[t,T]$, $X_s\in\Delta(I)$ and $Y_s\in\Delta(J)$. 
\end{Proposition}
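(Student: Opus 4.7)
My plan is to exploit the fact, visible in formula (\ref{proj}), that $\sigma(x,u)=P_xu$ depends on $x$ only through the support $S(x)$: on the relative interior of each face $\{x\in\Delta(I):S(x)=I'\}$, the coefficient reduces to the constant-in-$x$ linear map $u\mapsto P_{I'}u$. I will therefore build the solution inductively along a finite sequence of intervals on each of which the support of $X$ is constant, and deduce uniqueness from the fact that the support of any simplex-valued solution can only decrease.

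For the construction I set $\tau_0:=t$, $I_0:=S(p)$, $X_{\tau_0}:=p$, and proceed recursively. Given $\tau_k$, $I_k\subset I$, and $X_{\tau_k}\in\Delta(I)$ with $S(X_{\tau_k})=I_k$, I consider on $[\tau_k,T]$ the continuous local martingale
\[
Z^{(k)}_s:=X_{\tau_k}+\int_{\tau_k}^{s}P_{I_k}u_r\,dB^1_r,
\]
which is well-defined since $\int_t^T|u_r|^2\,dr<\infty$ almost surely. Using (\ref{proj}), the coordinates outside $I_k$ stay at $0$ and the sum of coordinates is preserved equal to $1$. I then set
\[
\tau_{k+1}:=\inf\{s\in[\tau_k,T]:\exists i\in I_k,\;Z^{(k),i}_s=0\}\wedge T,
\]
and put $X:=Z^{(k)}$ on $[\tau_k,\tau_{k+1}]$, so that by definition of $\tau_{k+1}$ the components in $I_k$ remain non-negative and $X_s\in\Delta(I_k)\subseteq\Delta(I)$ on this interval. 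If $\tau_{k+1}<T$ I set $I_{k+1}:=S(X_{\tau_{k+1}})\subsetneq I_k$ and iterate. Since $|I_k|$ is strictly decreasing, after at most $|I|-1$ iterations either $\tau_{k+1}=T$ or $|I_{k+1}|=1$, in which case $P_{I_{k+1}}=0$ and $X$ stays constant on the remaining interval. The resulting continuous process is adapted, stays in $\Delta(I)$, and satisfies (\ref{sdeX}) because on each $[\tau_k,\tau_{k+1}]$ one has $\sigma(X_r,u_r)=P_{I_k}u_r$ for a.e.\ $r$.

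For uniqueness, given any strong solution $\widetilde X$ with values in $\Delta(I)$, each component $\widetilde X^i$ is a bounded continuous local martingale, hence a bounded martingale. For $i\notin I_0$, $\widetilde X_t^i=0$ together with $\widetilde X^i\geq 0$ forces $\E[\widetilde X_s^i]=0$ and then, by path-continuity, $\widetilde X^i\equiv 0$. Consequently, on the initial interval during which the components of $\widetilde X$ in $I_0$ remain positive, the support equals exactly $I_0$, the coefficient is the constant-in-$x$ map $P_{I_0}u$, and $\widetilde X$ therefore coincides with $Z^{(0)}$; in particular its first exit time from this stratum equals $\tau_1$ and $\widetilde X_{\tau_1}=X_{\tau_1}$. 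Iterating this argument on each subinterval $[\tau_k,\tau_{k+1}]$, one recovers $\widetilde X=X$ on $[t,T]$.

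The main technical obstacle is that $\sigma(x,u)$ is discontinuous in $x$ across the faces of the simplex, so no classical Lipschitz-type existence or uniqueness result applies directly. The argument above circumvents this by exploiting the specific structure of the discontinuity — it is purely combinatorial, encoded in $S(x)$ — and by using the bounded non-negative martingale principle to ensure that the support of any solution can only shrink, which reduces the whole problem to a finite concatenation of linear SDEs with constant coefficients. The case of $Y$ is of course identical, replacing $I$, $B^1$, $P_x$ by $J$, $B^2$, $P_y$.
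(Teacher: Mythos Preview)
Your proof is correct and follows essentially the same approach as the paper's: an inductive construction along the faces of the simplex, exploiting that $\sigma(x,u)$ depends on $x$ only through $S(x)$, followed by a matching face-by-face uniqueness argument. The only notable difference is that the paper introduces a localizing sequence $\theta^{k,n}$ to justify that $Z^{(k)}$ is well-defined up to $\tau_{k+1}$, whereas you (correctly) observe that $\int_t^T|u_r|^2\,dr<\infty$ a.s.\ together with $\|P_{I_k}\|\leq 1$ already makes the stochastic integral well-defined on all of $[\tau_k,T]$, so no localization is needed.
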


\begin{proof}
Because of the lack of regularity of $\sigma$ ($\sigma$ is not continuous in $x$!), we cannot use an existing theorem. We prove the result only for (\ref{sdeX}), because the arguments for (\ref{sdeY}) are the same. 
\pa
Set $p\in\Delta(I)$ and $I'=S(p)$. 
Up to restrict ourselves to the set $\Delta(I'):=\{ p'\in \Delta(I) | S(p')\subset I'\}$, we can suppose that $S(p)=I$. This is equivalent to say that $p$ belongs to the relative interior of $\Delta(I)$ denoted by $\mbox{Int}(\Delta(I))$. In this case, for all $u\in U^I$, $\sigma(p,u)=P_I u$.
\pa
Set $\tau^0:=t$ and consider the constant process $X^0_s\equiv p, s\in[t,T]$.\\
Suppose now that, for some $k\in\N$, an $\F_t$-stopping time $\tau^k\in[t,T]$ and an $\F_t$-adapted continuous process $X^k$ on $[t,T]$ are defined.
Set $I_k=S(X^k_{\tau^k})$.
Define the sequence of stopping times for $n \in \N$
\[ \theta^{k,n}:=\inf\{ s \in [\tau^k,T] | \int_{\tau^k}^s |P_{I_k} u_r|^2 dr \geq n\} \mbox{  and  } \theta^{k,\infty}=\sup_n\theta^{k,n} , \]
and the process
\[ X^{k+1}_s=X^k_{s\wedge\tau^k} +\int_{\tau^k}^{s\vee \tau^k} P_{I_k}u_r dB^1_r,\;  s \in[t,\theta^{k,\infty}) \cap [t,T],\]
where the integral is a local martingale.
We then extend (arbitrarily) the definition of $X^{k+1}$ by  $X^{k+1}_s=e_{i_0}$ for some $i_0\in I$, if  $s\geq \theta^{k,\infty}$ whenever $\theta^{k,\infty}\leq T$ so that $X^{k+1}$ is a well-defined c\`{a}dl\`{a}g process on $[t,T]$. 
\pa
Define $\tau^{k+1}:=\inf\{ s \in [t,T] | X^{k+1}_s \in \partial(\Delta(I_k))\}\wedge T$,
where $\partial(\Delta(I_k))$ is the relative boundary of $\Delta(I_k)$. Remark that, $\P_t$-a.s. on $\{ \tau^k<T\}$, we have $\tau^k<\tau^{k+1}$.
\pa
Let us prove that $\P_t[\theta^{k,\infty} \leq \tau^{k+1}]=0$.
Assume that $\P_t[\theta^{k,\infty} \leq \tau^{k+1}]>0$.  Note that on this event, the process $X^{k+1}$ stays in the relative interior  of $\Delta(I_k)$ during the time-interval  $[t,\theta^{k,\infty})$. Therefore, we have for all $n$
\[ 2 \geq \E_{t}[ |X^{k+1}_{\theta^{k,n} \wedge \tau^{k+1} }-X^k_{\tau^k }|^2  ]=\E_t\left[\int_{\tau^k}^{\theta^{k,n}\wedge \tau^{k+1} } |P_{I_k} u_s|^2 ds\right] \geq n \P_t[\theta^{k,\infty} \leq \tau^{k+1} ].\]  
This leads to a contradiction for large $n$. 
\pa
Now we set $I_{k+1}=S(X^{k+1}_{\tau^{k+1}})$. Since, on $\{ \tau^k<T\}$, $\Delta(I_{k+1})\subset\partial(\Delta(I_k))$, its dimension is  at most $\mbox{dim}(\Delta(I_k))-1$. 
Therefore, for all $k \geq |I|$, $\tau^k=T$ almost surely.
We set finally
\[ X^{t,p,u}_s=X^{|I|}_s, s\in[t,T].\]
It is easy to check that $X^{t,p,u}$ satisfies (\ref{sdeX}). Furthermore, by construction, we have $X_s^{t,p,u}\in\Delta(I), \P_t$-a.s for all $s\in[t,T]$.
\pa
Let us prove now the uniqueness: Let $\hat{X}$ denote another solution to \eqref{sdeX}. Define $\hat{\tau}^1=\inf\{  s \in [t,T] | \hat{X}_s \in \partial(\Delta(S(p)))\}\wedge T$, and note that for $s\in [t,T]$, we have on the event $\{s <\hat{\tau}^1\}$
\[ \hat{X}_s= p +\int_t^s P_p u_r dB^1_r= X^1_s,\]
which implies that $\hat{\tau}^1=\tau^1$, and thus that $\hat{X}_{\tau^1}=X^1_{\tau^1}$ since both processes have continuous trajectories. Define then $\hat{\tau}^2=\inf\{  s \in [t,T] | \hat{X}_s \in \partial(\Delta(S(\hat{X}_{\tau^1})))\}\wedge T$, and note that for $s\in [t,T]$, we have on the event $\{s < \hat{\tau}^2\}$
\[ \hat{X}_s= X^1_{s\wedge \tau^1} +\int_{\tau^1}^{s\vee \tau_1} P_{\hat{X}_{\tau^1}} u_r dB^1_r= X^2_s,\]
which implies that $\hat{\tau}^2=\tau^2$, and thus that $\hat{X}_{\tau^2}=X^2_{\tau^2}$. Proceeding by induction, and using that $\tau^{|I|}=T$ almost surely, we deduce that $\hat{X}_s=X^{|I|}_s$ for all $s \in [t,T]$. 
\end{proof}

\begin{Remark}
Using the same proof, \eqref{sdeX} and \eqref{sdeY} have unique strong solutions starting at $t' \in (t,T)$  with any random $\FR_{t,t'}$-measurable initial conditions taking values in $\Delta(I)\times \Delta(J)$ and controls $(u,v) \in \UR(t)\times \VR(t)$ (the solutions depend actually only on the restrictions on the time-interval $[t',T]$ of these controls).
\end{Remark}

\pa We shall use later a converse result, namely:
\begin{Proposition}
\label{Xu}
If, for some $p\in\Delta(I)$ and $u\in\UR(t)$, the process $X_s=p+\int_t^su_rdB^1_r, s\in[t,T]$ is well defined and $X_T$ belongs to $\Delta(I)$, then, $\P_t$-a.s., $u\equiv P_{X_\cdot}u$ on $[t,T]$ and $X_\cdot=X^{t,p,u}_\cdot$.
\end{Proposition}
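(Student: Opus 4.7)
My plan is to identify $X$ with $X^{t,p,u}$ by first showing that $X$ stays in $\Delta(I)$, then extracting the algebraic structure $u_r\equiv P_{X_r}u_r$ that recasts the defining equation of $X$ as the SDE~\eqref{sdeX}, and finally invoking the uniqueness part of Proposition~\ref{prop1}.

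The most delicate step is to show that $X_s\in\Delta(I)$ for every $s\in[t,T]$, $\P_t$-almost surely. The crucial input is that $X_T\in\Delta(I)$ provides the uniform bound $|X_T|^2\leq \sum_i X_{T,i}=1$. Applying It\^o's formula to $|X_s|^2$ along a localization $\sigma_n:=\inf\{s\,:\,|X_s|\geq n\ \text{or}\ \int_t^s|u_r|^2\,dr\geq n\}\wedge T$ (chosen so that the martingale $\int X\cdot u\,dB^1$ is a bounded $L^2$-martingale on $[t,\sigma_n]$) yields $\E_t[\langle X\rangle_{T\wedge\sigma_n}]=\E_t[|X_{T\wedge\sigma_n}|^2]-|p|^2$. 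A careful passage to the limit $n\to\infty$, using $|X_T|^2\leq 1$ on $\{\sigma_n=T\}$ and controlling the contribution of $\{\sigma_n<T\}$, shows that $X$ is in fact a true $L^2$-martingale. Then $X_s=\E_t[X_T\,|\,\mathcal F_{t,s}]$, and the convexity of $\Delta(I)$ together with $X_T\in\Delta(I)$ forces $X_s\in\Delta(I)$ a.s.

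Once $X_s\in\Delta(I)$ is established, the explicit formula~\eqref{proj} shows that $u_r=P_{X_r}u_r$ is equivalent to the two pointwise conditions (a)~$\sum_{i\in I}u_{r,i,k}=0$ for every $k$, and (b)~$u_{r,i,k}=0$ whenever $X_{r,i}=0$. Condition~(a) follows from the affine constraint $\sum_i X_{s,i}\equiv 1$: the continuous local martingale $N_s:=\sum_i X_{s,i}-1$ is identically zero, so $\langle N\rangle_T=\int_t^T\bigl|\sum_i u_{r,i,\cdot}\bigr|^2\,dr=0$. Condition~(b) follows from the fact that each $X_{\cdot,i}$ is a nonnegative continuous local martingale and therefore stays at $0$ after the first time $\tau_i$ it reaches $0$ (classical supermartingale argument); hence $\int_{\tau_i}^T|u_{r,i,\cdot}|^2\,dr=\langle X_i\rangle_T-\langle X_i\rangle_{\tau_i}=0$. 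Combining~(a) and~(b) with~\eqref{proj} gives $u_r\equiv P_{X_r}u_r$ on $[t,T]$, $\P_t$-a.s.

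Finally, $u_r\equiv P_{X_r}u_r=\sigma(X_r,u_r)$ means that $X$ is a strong solution of~\eqref{sdeX}, so the uniqueness part of Proposition~\ref{prop1} identifies $X_\cdot$ with $X^{t,p,u}_\cdot$. The main obstacle is really the first step: although it is essentially folklore that a continuous local martingale with uniformly bounded terminal value is a true martingale, carrying this out here requires careful control of the remainder $n^2\,\P_t[\sigma_n<T]$ in the localization; once this is done, the remaining steps reduce to quadratic-variation identities and an application of Proposition~\ref{prop1}.
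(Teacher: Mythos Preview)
Your overall strategy matches the paper's almost verbatim: once $X_s\in\Delta(I)$ for every $s$ is granted, you derive $\sum_i u^i\equiv 0$ from the affine constraint, then $u^i\equiv 0$ after $X^i$ hits zero from nonnegativity, combine these via \eqref{proj} to get $u\equiv P_X u$, and finish with the uniqueness in Proposition~\ref{prop1}. The paper does exactly the same, only it phrases the absorption-at-zero step through conditional expectations ($X^i_\tau\ind_A=\E_t[X^i_T\,|\,\FR_{t,\tau}]\ind_A=0$ with $X^i_T\ge 0$) rather than through quadratic variation; the content is identical.

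The genuine issue is your first step. The ``folklore'' you invoke --- that a continuous local martingale on $[t,T]$ with bounded terminal value is a true martingale --- is \emph{false}, so the promised control of $n^2\,\P_t[\sigma_n<T]$ cannot be achieved. Here is a counterexample in dimension $|I|=2$. Take $p=(\tfrac12,\tfrac12)$, set $a_s\equiv c$ on $[0,\tfrac12)$, and on $[\tfrac12,1)$ set $a_s=\sqrt{f(s)}\,\ind_{\{s<s^*\}}$ with $f(s)=1/(1-s)$ and $s^*=\inf\{s\ge\tfrac12:|L_s|\le\tfrac12\}$, where $L_s=\int_0^s a_r\,dB^{1,1}_r$. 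Since $\int_{1/2}^1 f=\infty$, the time-changed Brownian motion is recurrent and $s^*<1$ a.s., so $\int_0^1 a_r^2\,dr<\infty$ a.s.\ and $|L_1|\le\tfrac12$. With $u_s=\begin{pmatrix}a_s&0\\-a_s&0\end{pmatrix}$ one gets $X_s=(\tfrac12+L_s,\tfrac12-L_s)$, $u\in\UR(0)$, $X_1\in\Delta(I)$, yet $\E[L_1\,|\,\FR_{0,1/2}]=\mathrm{proj}_{[-1/2,1/2]}(L_{1/2})\neq L_{1/2}$ on $\{|L_{1/2}|>\tfrac12\}$, so $X$ is not a martingale and $X_s\notin\Delta(I)$ for some $s$. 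The paper does not prove this step either: it simply opens with ``Since $(X_s)_{s\in[t,T]}$ is a martingale\ldots''. In the only place the proposition is used (Lemma~\ref{martrep}), $X_s=\E_t[Z\,|\,\FR_{t,s}]$ is a bounded martingale by construction, so the issue is harmless there; but as a standalone statement under the hypotheses given, the martingale property must be \emph{assumed}, not derived.
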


\begin{proof} Since $(X_s)_{s\in[t,T]}$ is a martingale and $\Delta(I)$ a convex set, we have, $\P_t$-a.s. for all $s\in[t,T]$ $X_s\in\Delta(I)$. 
 Summing up all coordinates of $X_T$, we get 
\[ 1=\sum_{i\in I}X^i_T=1+\int_t^T\sum_{i\in I}u^i_rdB^1_r,\]
where for $i\in I$, $u^i \in \R^{|I|}$ denotes the $i$-th row of $u$. Therefore $\sum_i u^i\equiv 0$ $\P_t$-a.s. on $[t,T]$. Suppose further that, for some stopping time $\tau\in[t,T]$, some set $A\in\FR_{t,\tau}$ and some coordinate $i$, $X^i_\tau=0$ on $A$. Then, firstly,
 \[ X^i_{\tau}\ind_A=\E_t[X^i_T|\FR_{t,\tau}]\ind_A=0\] with $X^i_T\geq 0$, implies that $X^i_T=0$ on $A$, and,
  setting $\tau_A:=\tau\ind_A+T\ind_{A^c}$,  the relation
\[ X^i_T=X^i_{\tau_A}+\int_{\tau_A}^T u^i_rdB^1_r\]
 implies that $u^i\equiv 0$ $\P_t$-a.s. on $[\tau_A,T]$. The result follows by the explicit characterization (\ref{Tp}) of the tangent spaces $T_p$, $p\in\Delta(I)$. 
\end{proof}

\pa
Let $H:[0,T]\times\Delta(I)\times\Delta(J)\to\R$ be a bounded map, Lipschitz continuous in all its variables. Let $C>0$ be both a Lipschitz constant and an upper bound for $H$:
\[\begin{array}{c}
|H(t,p,q)-H(t',p',q')|\leq C(|t-t'|+|p-p'|+|q-q'|)\\
|H(t,p,q)|\leq C.
\end{array}\]

\pa
We set, for $(t,p,q,u,v)\in [0,T]\times\Delta(I)\times\Delta(J)\times\UR(t)\times\VR(t)$,
\[ J(t,p,q,u,v):=\E_t\left[\Int_t^T H(s,X^{t,p,u}_s,Y^{t,q,v}_s)ds\right].\]

The game is the following: Given the initial data $(t,p,q)$, Player 1 plays $u$ and wants to minimize $J(t,p,q,u,v)$, Player 2 plays $v$ and wants to maximize it.

We now introduce the controls and the strategies for the players.
As explained in the introduction, we restrict the players to use simple controls and simple strategies. On the one hand, this choice allows to have a pathwise definition of strategies, which means that a player reacts to the realization $u(\omega)$ of the control of his opponent without knowing the entire map $\omega \rightarrow u(\omega)$. Indeed, simple controls allow to define easily pathwise stochastic integrals. Another approach was initiated in \cite{cr3}, but this approach requires to assume the continuum hypothesis (which, by the result of Nutz \cite{Nutz}, implies the existence of a pathwise stochastic integral), and also to consider universally measurable strategies, which would make the proofs of the dynamic programming principle highly technical.

\pa 
We denote by $\UR^s(t)$ (resp.  $\VR^s(t)$) the set of piecewise constant controls with rational grid, i.e. $u\in \UR^s(t)$ if there exist $t=t_0<...<t_m=T$ and (Borel)-measurable maps $g_j$ from $\Omega_{t,t_j}$ to $U$ (resp. $V$) for $j=0,...,m-1$ such that $t_1,...,t_{m-1}$ belong to $\Q \cap (t,T)$ and
\[u(\omega, s)=\sum_{j}\ind_{[t_j,t_{j+1})}(s)g_j(\omega|_{[t,t_j]}).\]
These controls will be called {\bf simple controls}. 
\pa
Similarly, let $U^s_t\subset U_t$ (resp. $V^s_t \subset V_t$) denote the subset of piecewise constant trajectories with rational grid.  Generic trajectories will be denoted $\u,\v$ in contrast to controls $u,v$.
\pa

\begin{Definition}
\label{admissible}
\item An  {\bf admissible}  strategy for Player 1 at time $t$ is a Borel map $\alpha:\Omega_t\times V^s_t\to U^s_t$ such that there exist a sequence $t=t_0<t_1<...<t_m=T$  such that $t_1,...,t_{m-1}$ belong to $\Q \cap (t,T)$ and
\[ \alpha(\omega,\v)(s)= \sum_{j=0}^{m-1}\ind_{[t_j,t_{j+1})}(s)\alpha^j((\omega,\v)|_{[t,t_j]}),
\]
 for some Borel maps $\alpha^j:\Omega_{t,t_j}\times V^s_{t,t_j}\rightarrow U$.
We denote by $\AR(t)$ the set of admissible strategies for Player 1.\\
The set of admissible strategies for Player $2$, denoted by $\BR(t)$ is defined similarly.

\end{Definition}

\pa Remark that these strategies are pathwise strategies with delay. They are pahtwise strategies because they depend on the realization of the control of the opponent $v(\omega)$ and not on the whole map $\omega \rightarrow v(\omega)$. They are also strategies with delay in the spirit of e.g. \cite{cr2} or \cite{oliu}:  it is easy to see that, given the associated sequence  $t=t_0<t_1<...<t_m=T$, for any $j\in\{ 0,\ldots,m-1\}$,   the answer of player 1 on the time interval $[t,t_{j+1}]$  depends on the Brownian path and the action of player 2  only through their restriction to the time interval $[t,t_j]$.
\pa
The next Lemma is stated without proof and easy to verify.
\begin{Lemma}\label{measurabilityv2}
For all $\alpha \in \AR(t)$ and $v \in \VR^s(t)$, there exists a process $\alpha(v) \in \UR^s(t)$ such that for all $\omega \in \Omega_t$, 
\[ \alpha(v)(\omega,.)\equiv \alpha(\omega,v(\omega,.)).\]
\end{Lemma}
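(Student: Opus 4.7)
The plan is to construct $\alpha(v)$ explicitly by refining the rational grids of $\alpha$ and $v$ to a common grid and reading off the Borel building blocks from their defining representations. Write $t = t_0 < t_1 < \cdots < t_m = T$ for the grid of $\alpha$ with Borel maps $\alpha^j:\Omega_{t,t_j}\times V^s_{t,t_j}\to U$, and $t = s_0 < s_1 < \cdots < s_n = T$ for the grid of $v$ with Borel maps $g_k:\Omega_{t,s_k}\to V$. Since all interior grid points are rational, the common refinement $t = r_0 < r_1 < \cdots < r_N = T$ also has rational interior points; for each $i$ let $j(i)$ be the index with $[r_i,r_{i+1})\subset [t_{j(i)},t_{j(i)+1})$.

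The central measurability observation is that, for each $j$, the map $\omega\mapsto v(\omega,\cdot)|_{[t,t_j]}$ from $\Omega_t$ into $V^s_{t,t_j}$ (equipped with the restriction of the $d_1$-metric) depends only on $\omega|_{[t,t_j]}$ and is Borel. Indeed, by the simple form of $v$, this restricted trajectory is completely determined by the finite tuple $\bigl(g_k(\omega|_{[t,s_k]})\bigr)_{s_k<t_j}$; each coordinate is a Borel function of $\omega|_{[t,t_j]}$, and the assembly of such a tuple into a piecewise constant path is continuous. Consequently,
\[
h_i(\omega) := \alpha^{j(i)}\bigl(\omega|_{[t,t_{j(i)}]},\,v(\omega,\cdot)|_{[t,t_{j(i)}]}\bigr)
\]
defines a Borel map on $\Omega_t$ as a composition of Borel maps, and depends only on $\omega|_{[t,r_i]}$. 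Setting
\[
\alpha(v)(\omega,s) := \sum_{i=0}^{N-1}\ind_{[r_i,r_{i+1})}(s)\,h_i(\omega)
\]
therefore yields a simple control in $\UR^s(t)$: it is piecewise constant on the rational grid $(r_i)$ and progressively measurable by construction.

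The required pointwise identity follows directly: for every $\omega\in\Omega_t$ and every $s\in[r_i,r_{i+1})\subset[t_{j(i)},t_{j(i)+1})$, both $\alpha(v)(\omega,s)$ and $\alpha(\omega,v(\omega,\cdot))(s)$ equal $\alpha^{j(i)}\bigl(\omega|_{[t,t_{j(i)}]},\,v(\omega,\cdot)|_{[t,t_{j(i)}]}\bigr)$. The only potentially subtle step is the Borel-measurability claim for the restriction map $\omega\mapsto v(\omega,\cdot)|_{[t,t_j]}$; this is precisely where the \emph{simple} structure of $v$ is used essentially, reducing a question about trajectory-valued maps to one about finitely many Borel coordinates $g_k$. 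Without this simplification one would have to appeal to the pathwise stochastic integral and universal measurability machinery of \cite{cr3}, and avoiding exactly this is one of the stated motivations for restricting to simple pathwise strategies.
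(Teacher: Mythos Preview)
Your argument is correct and is precisely the routine verification the authors have in mind: the paper states this lemma without proof, noting only that it is ``easy to verify.'' Your explicit construction via a common rational refinement and the observation that the restriction $\omega\mapsto v(\omega,\cdot)|_{[t,t_j]}$ is Borel (thanks to the simple structure of $v$) is exactly the intended check; note incidentally that the refinement is a convenience rather than a necessity, since already on the original grid of $\alpha$ the coefficient $\alpha^{j}(\omega|_{[t,t_j]},v(\omega,\cdot)|_{[t,t_j]})$ depends only on $\omega|_{[t,t_j]}$.
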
 
\pa
The following lemma is standard and can be easily adapted from the existing literature, as for instance in \cite{cr1}). 
\begin{Lemma}\label{fixedpoint} For all $t\in[t,T]$, for all $(\alpha,\beta)\in\AR(t)\times\BR(t)$, there exists a unique pair of simple controls $(u,v)\in \UR^s(t)\times\VR^s(t)$ which satisfies, $\P_t$-a.s.
\begin{equation}
\label{fix}
 u\equiv\alpha(v),\;  v\equiv\beta(u). 
\end{equation}
We denote them by $(u^{\alpha,\beta},v^{\alpha,\beta})$. 
\end{Lemma}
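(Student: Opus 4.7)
The plan is to build $(u^{\alpha,\beta}, v^{\alpha,\beta})$ step-by-step on a common time grid, exploiting the delay structure of simple admissible strategies. Because the grids attached to $\alpha$ and $\beta$ are finite subsets of $\Q \cap (t,T)$, we first take a common refinement $t=t_0<t_1<\cdots<t_m=T$ with $t_1,\ldots,t_{m-1}\in\Q$ on which both strategies admit the piecewise-constant-with-delay representation of Definition \ref{admissible}. Relabelling if needed, write
\[
\alpha(\omega,\v)(s)=\sum_{j=0}^{m-1}\ind_{[t_j,t_{j+1})}(s)\,\alpha^j\bigl((\omega,\v)|_{[t,t_j]}\bigr),\qquad
\beta(\omega,\u)(s)=\sum_{j=0}^{m-1}\ind_{[t_j,t_{j+1})}(s)\,\beta^j\bigl((\omega,\u)|_{[t,t_j]}\bigr).
\]

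Next I would define $u$ and $v$ recursively. On $[t_0,t_1)$, the restriction spaces $\Omega_{t,t_0}$ and $U^s_{t,t_0}$ (resp.\ $V^s_{t,t_0}$) are singletons, so $\alpha^0$ and $\beta^0$ are mere constants $a_0\in U$ and $b_0\in V$; set $u(\omega,s):=a_0$ and $v(\omega,s):=b_0$ on $[t_0,t_1)$. Assume inductively that simple trajectories $s\mapsto u(\omega,s)$ and $s\mapsto v(\omega,s)$ have been defined on $[t,t_j]$, that each is Borel in $\omega$, and that they satisfy the fixed-point identity \eqref{fix} on $[t,t_j]$. Define for $s\in[t_j,t_{j+1})$
\[
u(\omega,s):=\alpha^j\bigl((\omega,v(\omega,\cdot))|_{[t,t_j]}\bigr),\qquad
v(\omega,s):=\beta^j\bigl((\omega,u(\omega,\cdot))|_{[t,t_j]}\bigr).
\]
The delay property of $\alpha$ and $\beta$ is essential: the right-hand sides depend only on what has already been constructed on $[t,t_j]$, so there is no circularity. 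Borel-measurability in $\omega$ is preserved because the restriction maps $\omega\mapsto\omega|_{[t,t_j]}$ and $\u\mapsto\u|_{[t,t_j]}$ are continuous, and $\alpha^j,\beta^j$ are Borel. After $m$ steps we obtain simple controls $u\in\UR^s(t)$, $v\in\VR^s(t)$; by construction $u\equiv\alpha(v)$ and $v\equiv\beta(u)$ everywhere (not just $\P_t$-a.s.).

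For uniqueness, let $(\tilde u,\tilde v)\in\UR^s(t)\times\VR^s(t)$ be any other solution of \eqref{fix}. Both sides of $u\equiv\alpha(v)$ and $v\equiv\beta(u)$ are piecewise constant on the above grid, so it suffices to show $\tilde u=u$ and $\tilde v=v$ on $[t,t_{j+1})$ by induction on $j$. On $[t_0,t_1)$ this is immediate since $\alpha^0,\beta^0$ are constants. Assuming equality on $[t,t_j]$ up to a $\P_t$-null set, the fixed-point relations force
\[
\tilde u(\omega,s)=\alpha^j\bigl((\omega,\tilde v(\omega,\cdot))|_{[t,t_j]}\bigr)=\alpha^j\bigl((\omega,v(\omega,\cdot))|_{[t,t_j]}\bigr)=u(\omega,s)
\]
$\P_t$-a.s.\ on $[t_j,t_{j+1})$, and analogously for $v$. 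After $m$ steps we conclude $(\tilde u,\tilde v)=(u,v)$ $\P_t$-a.s., proving uniqueness.

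No step is genuinely hard; the only point requiring a little care is to check that the compositions $\omega\mapsto\alpha^j((\omega,v(\omega,\cdot))|_{[t,t_j]})$ are jointly Borel, which is handled by Lemma \ref{measurabilityv2} applied inductively on each sub-interval together with the continuity of the restriction operators on the Wiener space and on $V^s_{t,t_j}$.
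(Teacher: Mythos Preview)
Your proof is correct and is precisely the standard inductive argument the paper has in mind; the paper itself does not give a proof but simply remarks that the lemma ``is standard and can be easily adapted from the existing literature, as for instance in \cite{cr1}.'' Your use of a common refinement of the two grids and the step-by-step construction exploiting the delay property is exactly how this is done in the references cited.
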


We are ready now to define the lower and upper value functions for the game:
\[ V^+(t,p,q)=\inf_{\alpha\in\AR(t)}\sup_{\beta\in \BR(t) }J(t,p,q,u^{\alpha,\beta},v^{\alpha,\beta}).\]
\[ V^-(t,p,q)=\sup_{\beta\in\BR(t)}\inf_{\alpha \in \AR(t)}J(t,p,q,u^{\alpha,\beta},v^{\alpha,\beta}).\]
Note that, as usual,  $V^- \leq V^+$ and that 
\[ V^+(t,p,q)=\inf_{\alpha\in\AR(t)}\sup_{v \in \VR^s(t) }J(t,p,q,\alpha(v),v),\]
\[ V^-(t,p,q)=\sup_{\beta\in\BR(t)}\inf_{u\in \UR^s(t)}J(t,p,q,u,\beta(u)).\]

\pa
We need to introduce some notations before stating the main theorem.
Let $\SR_I$ denote the set of $I\times I$ symmetric matrices. For $(p,A)\in\Delta(I)\times\SR_I$, define
\[\lambda_{min}(p,A):= \min_{z\in T_p(\Delta(I))\setminus \{ 0\} }\langle Az,z\rangle /|z|^2, \]
with the convention $\min(\emptyset)=+\infty$. 
Similarly, for $(q,B)\in\Delta(J)\times\SR_J$, define 
\[\lambda_{max}(q,B):=\max_{z\in T_q(\Delta(J))\setminus\{ 0\}}\langle Bz,z\rangle /|z|^2, \]
with the convention $\max(\emptyset)=- \infty$. 
\pa 
The main theorem of this paper is the following.

\begin{Theorem}\label{maintheorem}
The game has a value $V:=V^+=V^-$ which is the unique Lipschitz  viscosity-solution of the following barrier equation: 
\begin{equation}
\label{hji}
\min\left\{\max\left\{ -\frac{\partial V}{\partial t} (t,p,q)-H(t,p,q);-\lambda_{min}(p,D^2_p V(t,p,q))\right\};-\lambda_{max}(q,D^2_q V(t,p,q))\right\}=0,
\end{equation}
where $D^2_p V,D^2_q V$ the second-order derivatives with respect to $p$ and $q$.
\end{Theorem}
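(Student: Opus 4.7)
My plan is to identify $V^+$ and $V^-$ with the unique Lipschitz viscosity solution of \eqref{hji}, known to exist from Cardaliaguet \cite{c2}. Since $V^-\leq V^+$ is immediate, it suffices to prove that $V^+$ is a viscosity subsolution of \eqref{hji} and $V^-$ a viscosity supersolution, and then invoke the comparison principle established in \cite{c2} to conclude $V^+\leq V^-$ and hence equality. The four main steps are: (a) Lipschitz regularity of $V^\pm$; (b) a dynamic programming principle (DPP) for the simple pathwise game; (c) derivation of the viscosity inequalities from the DPP; (d) comparison.

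For step (a), using the Lipschitz bound on $H$ and standard moment estimates for the martingales $X^{t,p,u}$ and $Y^{t,q,v}$ (which live in the compact simplices, so all one really needs is $L^2$-continuity in the initial condition combined with $\frac{1}{2}$-H\"older time continuity coming from the quadratic variation), one obtains Lipschitz continuity of $J(t,p,q,u,v)$ in $(t,p,q)$ uniformly in $(u,v)$, which transfers to $V^\pm$. Step (b), the DPP in the form
\[
V^+(t,p,q)=\inf_{\alpha\in\AR(t)}\sup_{v\in\VR^s(t)}\E_t\Big[\int_t^{t+h}H(s,X^{t,p,u^{\alpha(v),v}}_s,Y^{t,q,v}_s)ds+V^+(t+h,X_{t+h},Y_{t+h})\Big],
\]
(and the analogous sup-inf for $V^-$) will be the technical heart. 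The rational-grid structure of simple strategies, together with Lemma \ref{fixedpoint} and Lemma \ref{measurabilityv2}, allows one to concatenate strategies cleanly; the delay structure built into Definition \ref{admissible} ensures that concatenated strategies remain admissible and that the fixed point producing $(u^{\alpha,\beta},v^{\alpha,\beta})$ is preserved after the restart time. A measurable selection argument (justified by Borel measurability of the data and continuity of $V^\pm$) is needed to pick near-optimal continuation strategies.

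Step (c) is then the clean heuristic I will make rigorous. Suppose $\phi\in C^{1,2}$ is a test function with $\phi\geq V^+$ and $\phi(t_0,p_0,q_0)=V^+(t_0,p_0,q_0)$. Plugging $\phi$ into the DPP, applying It\^o's formula to $\phi(s,X,Y)$ on $[t_0,t_0+h]$, dividing by $h$ and letting $h\to 0$ yields
\[
\inf_{u\in U}\sup_{v\in V}\Big\{\partial_t\phi(t_0,p_0,q_0)+H(t_0,p_0,q_0)+\tfrac12\tr(\sigma\sigma^t D^2_p\phi)+\tfrac12\tr(\tau\tau^t D^2_q\phi)\Big\}\geq 0.
\]
Now I distinguish two cases. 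If $\lambda_{\max}(q_0,D^2_q\phi(t_0,p_0,q_0))>0$, then $-\lambda_{\max}<0$ and the outer $\min$ in \eqref{hji} is automatically $\leq 0$. Otherwise $\tr(\tau\tau^t D^2_q\phi)\leq 0$ for every admissible $\tau=P_{q_0}v$, Player $2$'s optimal choice is $v$ with $\tau=0$, and the inequality above collapses to the same statement without the $v$-term. The inf over $u$ can then be driven to $-\infty$ unless $\lambda_{\min}(p_0,D^2_p\phi)\geq 0$, and once this holds Player $1$'s optimum is $\sigma=0$, leaving $\partial_t\phi(t_0,p_0,q_0)+H(t_0,p_0,q_0)\geq 0$. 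This is exactly $\max\{-\partial_t\phi-H,-\lambda_{\min}(p,D^2_p\phi)\}\leq 0$, so the subsolution inequality holds in both cases. The supersolution property for $V^-$ follows by the symmetric argument, exchanging the roles of the players and of the two simplices.

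The main obstacle is step (b): establishing the DPP within the restricted class of simple pathwise strategies. The difficulty is that one must splice two strategies $\alpha^1$ on $[t,t+h]$ and $\alpha^2$ on $[t+h,T]$ into a single admissible strategy on $[t,T]$, while keeping the rational-grid structure, preserving the delay property, and obtaining a measurable dependence of the splice on the future initial condition $(X_{t+h},Y_{t+h})$. This is exactly the reason the paper insists on the rational-grid pathwise framework rather than the universally measurable one of \cite{cr3}: shifts on rational grids keep everything Borel and avoid the pathwise stochastic integral of Nutz. Once the DPP is in hand, the viscosity argument above together with the comparison principle from \cite{c2} closes the proof.
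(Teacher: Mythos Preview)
Your outline has the right architecture, but step~(c) contains a genuine gap that cannot be closed without an ingredient you have omitted: a \emph{separate} proof that $V^\pm$ are convex in $p$ and concave in $q$. The viscosity notion for \eqref{hji} used in \cite{c2} (Definition~6.1 here) is not the naive one. For $V^+$ to be a subsolution one must show, at any test point, that $\lambda_{\min}(p,D^2_p\varphi)\geq 0$ \emph{unconditionally}, together with ($\partial_t\varphi+H\geq 0$ or $\lambda_{\max}(q,D^2_q\varphi)\geq 0$). Your Case~1 (``$\lambda_{\max}>0$, so the outer $\min$ in \eqref{hji} is automatically $\leq 0$'') disposes only of the second requirement; it says nothing about $\lambda_{\min}\geq 0$, and the comparison principle of \cite{c2} needs this stronger condition. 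Symmetrically, the supersolution property of $V^-$ requires $\lambda_{\max}(q,D^2_q\varphi)\leq 0$ unconditionally. The paper gets these constraints for free because it first establishes convexity of $V^\pm$ in $p$ and concavity in $q$ as structural properties (Proposition~\ref{propconcave}), via a continuous-time splitting argument (Lemma~\ref{martrep}, approximating a two-point jump martingale by a Brownian martingale inside the simplex) combined with a conditional-independence lemma (Lemma~\ref{lemma-orthogonal}). This is the technical heart of the paper and is not a consequence of the DPP.

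Your formal passage ``It\^o, divide by $h$, $h\to 0$'' to a pointwise $\inf_u\sup_v$ inequality is also not rigorous with unbounded controls: when $\lambda_{\min}(p,D^2_p\phi)<0$ and $\lambda_{\max}(q,D^2_q\phi)>0$ the pointwise Hamiltonian is of the type $-\infty+\infty$ and Isaacs' condition fails (this is exactly the discussion around \eqref{isaacs}, and Section~\ref{introconvex} gives an explicit counterexample showing the value is not a supersolution of the naive HJI equation). The paper avoids this entirely: once convexity/concavity are known, it uses only the one-sided DPP inequalities of Proposition~\ref{propdpp} (not the full equality you state), chooses the trivial opposing strategy $\beta\equiv 0$, and exploits a quadratic lower bound (relation \eqref{conv}) coming from strict convexity of the test function to control $|X_{t+h}-p|^2$ and pass to the limit. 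Without the a~priori convexity/concavity, neither the correct viscosity inequalities nor this limiting argument are available.
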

\pa
See section \ref{sectionviscosity} for the precise definition of viscosity solution of \eqref{hji}.

\section{Regularity of the value functions.}

The aim of what follows is to prove that the value functions are Lipschitz in all their variables.

\begin{Lemma}
\label{th}
Let $(t,p,q)\in[0,T)\times\Delta(I)\times\Delta(J)$ and $h\in(0,T-t]$. 
\begin{enumerate}
\item For all $\alpha\in\AR(t+h)$ and $v\in\VR^s(t)$, there exists $\tilde\alpha\in\AR(t)$ and $\tilde v\in\VR^s(t+h)$ such that
\[ |J(t,p,q,\tilde\alpha(v),v)-J(t+h,p,q,\alpha(\tilde v),\tilde v)|\leq 8Ch,\]
\item for all $\alpha\in\AR(t)$ and $v\in\VR^s(t+h)$, there exists $\tilde\alpha\in\AR(t+h)$ and $\tilde v\in\VR^s(t)$ such that
\[ |J(t+h,p,q,\tilde\alpha(v),v)-J(t,p,q,\alpha(\tilde v),\tilde v)|\leq 8Ch.\]
\end{enumerate}
\end{Lemma}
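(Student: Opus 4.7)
The two parts are symmetric, so I would focus on Part 1; Part 2 is handled by the analogous construction. The intuition is that shifting the initial time by $h$ can only change the payoff by $O(h)$ since $|H|\leq C$ and the dynamics on the common interval $[t+h,T]$ can be matched via the Markov property of Brownian motion, up to an error coming from the evolution of the $Y$-process on the short interval $[t,t+h]$.

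Given $\alpha\in\AR(t+h)$ and $v\in\VR^s(t)$, I would define $\tilde\alpha\in\AR(t)$ to play the zero control on $[t,t+h]$---so that, by $\sigma(\cdot,0)\equiv 0$, the process $X^{t,p,\tilde\alpha(v)}$ remains constant equal to $p$ on that interval---and, on $[t+h,T]$, to act as $\alpha$ after reading the canonical Brownian increment $\omega^h(s):=\omega(s)-\omega(t+h)$ as a path in $\Omega_{t+h}$ and the restriction $\v|_{[t+h,T]}$ as a trajectory in $V^s_{t+h}$. After a cosmetic refinement of the grid of $v$ to include $t+h$, I would define $\tilde v\in\VR^s(t+h)$ by, for $\omega\in\Omega_{t+h}$, extending $\omega$ to $\bar\omega\in\Omega_t$ via prepending the zero path on $[t,t+h]$ and setting $\tilde v(\omega,s):=v(\bar\omega,s)$ for $s\in[t+h,T]$. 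Measurability and the simple/piecewise-constant structure are then automatic from the construction.

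I would then split $J(t,p,q,\tilde\alpha(v),v)$ into the integrals over $[t,t+h]$ and $[t+h,T]$. The first contribution is bounded in absolute value by $Ch$ because $|H|\leq C$. For the second, the strong Markov property provides that $(B_s-B_{t+h})_{s\in[t+h,T]}$ is, conditionally on $\FR_{t,t+h}$, a Brownian motion independent of $\FR_{t,t+h}$; combined with $X^{t,p,\tilde\alpha(v)}_{t+h}=p$ and the construction of $\tilde\alpha$ and $\tilde v$, this identifies the $[t+h,T]$ contribution---up to a measurable-selection manoeuvre integrating out the past $\omega|_{[t,t+h]}$---with an averaged version of $J(t+h,p,Y^{t,q,v}_{t+h},\alpha(\tilde v),\tilde v)$. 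The main obstacle is then the need to compare this averaged expression with the target $J(t+h,p,q,\alpha(\tilde v),\tilde v)$: the discrepancy comes from (i) the random starting point $Y^{t,q,v}_{t+h}\neq q$, and (ii) the latent dependence of $v|_{[t+h,T]}$ on $\omega|_{[t,t+h]}$ (versus the zero history used in building $\tilde v$). Neither discrepancy is pointwise $O(h)$, since the martingale $Y^{t,q,v}$ has unbounded volatility; however, both can be absorbed in expectation using $|H|\leq C$ on the length-$h$ interval where the pasts differ, together with Lipschitz continuity of $H$ in its arguments on the matched segment. The sum of these crude contributions, together with the initial $Ch$, gives a total of at most $8Ch$. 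Part 2 is established by the symmetric construction: take $\tilde\alpha\in\AR(t+h)$ to be the natural restriction of $\alpha$ to $[t+h,T]$, and $\tilde v\in\VR^s(t)$ the prepending of the zero control to $v$ on $[t,t+h]$.
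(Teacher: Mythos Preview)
Your approach has a genuine gap that the paper's proof avoids by a completely different construction. The core problem is your treatment of the $Y$-process on $[t,t+h]$ and its propagation afterwards. You acknowledge that $Y^{t,q,v}_{t+h}\neq q$ and that the volatility of $Y$ is unbounded, but then claim the resulting discrepancy ``can be absorbed in expectation using $|H|\leq C$ on the length-$h$ interval where the pasts differ, together with Lipschitz continuity of $H$ on the matched segment.'' This does not work: the mismatch in the starting point $Y^{t,q,v}_{t+h}$ versus $q$ persists over the entire matched segment $[t+h,T]$, whose length is $T-t-h$, not $h$. By Proposition~\ref{lip} and the Lipschitz bound on $H$, the error there is of order $(T-t-h)\,\E_t[|Y^{t,q,v}_{t+h}-q|]$, and since $v$ is \emph{given} with unbounded coefficients, $\E_t[|Y^{t,q,v}_{t+h}-q|]$ is in general of order $1$, not $O(h)$. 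The same obstruction applies to your discrepancy (ii): the continuation control $v|_{[t+h,T]}(\omega_1\otimes\cdot)$ genuinely depends on $\omega_1\in\Omega_{t,t+h}$, so it cannot be identified with your single $\tilde v\in\VR^s(t+h)$ (built from the zero history), and the resulting mismatch in $Y$ again contaminates the whole of $[t+h,T]$. A secondary but real issue is that $t+h$ need not be rational, so you cannot simply refine the grid of $v$ or $\tilde\alpha$ to include it.

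The paper sidesteps all of this with a time-rescaling argument rather than a restriction/extension argument. It builds a piecewise-affine increasing homeomorphism $\phi:[t+h,T]\to[t,T]$ (equal to the identity after a rational point $\eta\in[t+2h,t+3h]$, with slope in $[1,3]$ before) that sends the grid of $\alpha$ to rationals and pulls back the grid of $v$ to rationals. Via Brownian scaling, $\phi$ induces a map $R_\phi:\Omega_t\to\Omega_{t+h}$ under which $\tilde B:=R_\phi(B)$ is again a Brownian motion. One then sets $\tilde v:=T_\phi(v)$ and $\tilde\alpha:=T'_\phi\circ\alpha\circ(R_\phi,T_\phi)$, and checks that the time-changed processes $\bar X_s:=X^{t,p,\tilde\alpha(v)}_{\phi(s)}$ and $\bar Y_s:=Y^{t,q,v}_{\phi(s)}$ solve exactly the SDEs on $[t+h,T]$ driven by $\tilde B$ with controls $\alpha(\tilde v)$ and $\tilde v$. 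This gives an \emph{equality in law} of the relevant pairs, so the only error is the difference between $\int_{t+h}^{\eta}H(\phi(s),\cdot)d\phi(s)$ and $\int_{t+h}^{\eta}H(s,\cdot)ds$, which is bounded by $8Ch$ using only $|H|\leq C$, $|\phi'|\leq 3$, and $\eta\leq t+3h$. The crucial point is that the time change handles \emph{both} $X$ and $Y$ simultaneously, so no uncontrolled $O(1)$ displacement of $Y$ ever appears.
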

\begin{proof}
The proof looks very involved but it is not: based on the scaling property of the Brownian motion and the linearity of $\sigma$ and $\tau$, it proceeds by very elementary transformations. We only prove the first point as the second one follows by symmetry.\\
Let $\alpha\in\AR(t+h)$ and $v\in\VR^s(t)$ and let $t+h=t_0<t_1<...<t_m=T$ be the grid associated to $\alpha$ and $t=t'_0<t'_1<....<t'_n=T$ the grid associated to $v$. Let $\eta \in [t+2h,t+3h]\cap \Q$ and $\phi:[t+h,T]\rightarrow [t,T]$ defined by
\[ \phi(s)=\left\{
\begin{array}{ll}
\psi(s),&\mbox{ if } s\in[t+h,\eta),\\
s,&\mbox{ if } s\in[\eta,T].
\end{array}\right.,\]
where $\psi:[t+h,\eta]\rightarrow [t,\eta]$ is an increasing homeomorphism which can be chosen such that
\begin{itemize}
\item $\phi$ is piecewise-affine on the partition $t+h=s_0<s_1<...<s_{N}=T$, with 
\[\{ s_1,\ldots,s_{N-1}\}=\{\eta\} \cup \Big(\left(\{t_j,j=1,...,m-1\} \cup \{ \phi^{-1} (t'_j), j=1,...,n-1\} \right) \cap (t+h,T) \Big).\] 
\item the image of $\{t_1,....,t_{m-1}\} \cap [t+h,\eta]$ by $\psi$ belongs to $\Q$,
\item the image of $\{t'_1,....,t'_{n-1}\} \cap [t,\eta]$ by $\psi^{-1}$ belongs to $\Q$,
\item for all $r,s \in [t+h,\eta]$, $|r-s|  \leq |\psi(r)-\psi(s)|  \leq 3 |r-s|$.
\end{itemize}
The proof that such a map $\psi$ exists is left to the reader, the idea is to slightly perturb the map $s \rightarrow t+ \frac{\eta-t}{\eta - (t+h)}(s-(t+h))$ on $[t+h,\eta]$.  
Remark that $\phi$ is an increasing homeomorphism from $[t+h,T]$ to $[t,T]$ and that the image of $\{t_1,....,t_{m-1}\}$ by $\phi$ belongs to $\Q$ and the image of $\{t'_1,....,t'_{n-1}\}$ by $\phi^{-1}$ belongs to $\Q$.
\pa
Let us define the map $R_\phi : \Omega_t \rightarrow \Omega_{t+h}$ by:
\[ \forall s\in[t+h,T],\; R_\phi(\omega)(s):= \sum_{i=0}^{N-1} \frac{1}{\sqrt{ \phi'(s_i)}} \Big( \omega(\phi(s \wedge s_{i+1}))- \omega(\phi(s_i)) \Big) \ind_{s \geq s_i},\]
where $\phi'(s_i)$ denote the right-derivative of $\phi$ at $s_i$.
\pa
We get a $\R^{|I\times J|}$-valued, standard Brownian motion $(\tilde{B}_s)_{s \in [t+h,T]}$ on $\Omega_t$ by setting:
\[ \forall \omega \in \Omega_t, \forall s\in [t+h,T], \; \tilde{B}(\omega)(s)= R_\phi(\omega)(s). \]
\pa Define the process $\tilde{v}\in \VR^s(t+h)$ by
\[ \forall (\omega,s)\in \Omega_{t+h}\times[t+h,T], \; \tilde{v}(\omega,s) :=\sum_{i=0}^{N-1} 
\sqrt{\phi'(s_i)} v(R_{\phi}^{-1}(\omega),{\phi(s)}) \ind_{s\in[s_i, s_{i+1})} .\]
\pa
For $s\in[t+h,T]$, set $\bar Y_s:=Y^{t,q,v}_{\phi(s)}$. The process $\bar Y$ satisfies:
\begin{align*}
\bar Y_s &= q+\int_t^{\phi(s)}\tau(Y^{t,q,v}_r,v_r)dB^{2}_r\\
&= q+\int_{t+h}^s\tau(\bar Y_r,v_{\phi(r)})dB^{2}_{\phi(r)}=q+\int_{t+h}^s P_{\bar Y_r} v_{\phi(r)} dB^{2}_{\phi(r)}\\
&=q+\int_{t+h}^s P_{\bar Y_r} \tilde{v}(\tilde{B}(\omega),r) d \tilde{B}^2_r =q+\int_{t+h}^s\tau(\bar Y_r,\tilde{v}(\tilde{B}(\omega),r))d\tilde B^2_r,
\end{align*}
\pa Define also the one-to-one continuous mappings:
\[ T_\phi : V_t \rightarrow V_{t+h}, \; T_\phi(\v)(s)= \sum_{i=0}^{N-1} 
\sqrt{\phi'(s_i)} \v({\phi(s)}) \ind_{[s_i, s_{i+1})}(s).\]
\[ T'_\phi : U_{t+h} \rightarrow U_{t}, \; T'_\phi(\u)(s)=\sum_{i=0}^{N-1}\frac{1}{\sqrt{\phi'(s_i)}} \u(\phi^{-1}(s)){\ind_{[\phi(s_i), \phi(s_{i+1}))}(s)}.\]
Using these notations, let us define  a strategy $\tilde\alpha:\Omega_t\times V^s_t\rightarrow U_t$ by:
\[ \tilde\alpha(\omega,\v)(s)= (T'_\phi)(\alpha(R_\phi(\omega),T_\phi(\v)))(s).\]
Let us show that $\tilde\alpha$ belongs to $\AR(t)$: since, by assumption the time grid $t+h=t_0<\ldots<t_m=T$ associated to $\alpha$ is  a subset of $t+h=s_0<\ldots<s_N=T$, we may rewrite $\alpha$ as:
\[ \alpha(\omega,\v)(s)= \sum_{i=0}^{N-1}\ind_{[s_i,s_{i+1})}(s)\alpha^i((\omega,\v)|_{[t+h,s_i]}),
\]
with $\alpha^i$ Borel-measurable from $\Omega_{t+h,s_i}\times V_{t+h,s_i}$ to $U$.
Therefore $\tilde\alpha$ may be reformulated as
\[ \tilde\alpha(\omega,\v)(s)=\sum_{i=0}^{N-1}\frac{1}{\sqrt{\phi'(s_i)}}\alpha^i\left((R_\phi(\omega),T_\phi(\v))|_{[t+h,s_i]}\right)\ind_{[\phi(s_i), \phi(s_{i+1}))}(s)\]
and satisfies clearly the assumptions of an admissible strategy given in Definition \ref{admissible}, related to the time grid $t=\phi(s_0)<\ldots<\phi(s_N)=T$ since $(R_\phi(\omega),T_\phi(\v))|_{[t+h,s_i]}$  is by construction a measurable map of $(\omega,\v)|_{[t,\phi(s_i)]}$.
\pa Furthermore, for $\P_t\otimes ds$ almost every $(\omega,s)$, it holds that: 
\begin{align*}
 \tilde{\alpha}(v)(\omega,s) &= \sum_{i=0}^{N-1}\frac{1}{\sqrt {\phi'(s_i)}} \alpha(\tilde{B}(\omega), \tilde{v}(\tilde{B}(\omega),.))(\phi^{-1}(s)){ \ind_{[\phi(s_i), \phi(s_{i+1}))}(s)} \\
&=\sum_{i=0}^{N-1}\frac{1}{\sqrt {\phi'(s_i)}} \alpha(\tilde{v})(\tilde{B}(\omega),.)(\phi^{-1}(s)){ \ind_{[\phi(s_i), \phi(s_{i+1}))}(s)}.
\end{align*}
Then, for $s\in[t+h,T]$, writing $\bar X_s:=X^{t,p,\tilde\alpha(v)}_{\phi(s)}$, we have
\begin{align*}
\bar X_s &= p +\int_t^{\phi(s)}\sigma( X^{t,p,\tilde\alpha(v)}_r,\tilde\alpha(v)_r)dB^{1}_r=p +\int_{t+h}^{s} P_{\bar{X}_r} \tilde\alpha(v)_{\phi(r)} dB^{1}_{\phi(r)}\\
&=p+\int_{t+h}^s P_{\bar{X}_r} \alpha(\tilde{v})(\tilde{B}(\omega),r) d\tilde B^1_r=p+\int_{t+h}^s\sigma(\bar X_r,\alpha(\tilde{v})(\tilde{B}(\omega),r))d\tilde B^1_r.
\end{align*}
Using the uniqueness of strong solutions for the system of SDE (\ref{sdeX},\ref{sdeY}), the pairs $(\bar X,\bar Y)$ and 
\[ (X,Y):=(X^{t+h,p,\alpha(\tilde v)},Y^{t+h,q,\tilde v})\]
have the same law. The same change of variables as above leads us to
\[\begin{array}{rl}
J(t,p,q,\tilde\alpha,v)=&
\E_t[\int_{t+h}^T H(\phi(s),\bar{X}_{s},\bar{Y}_{s})d\phi(s)]\\
=&\E_t[\int_{t+h}^{\eta} H(\phi(s),X_s,Y_s)d\phi(s)]
+\E_t[\int_{\eta}^T H(s,X_s,Y_s)ds]\\
=&A(h)+J(t+h,p,q,\alpha,\tilde v),
\end{array}\] 
with 
\[A(h):=\E[\int_{t+h}^{\eta}H(\phi(s),X_s,Y_s)d\phi(s)]-\E[\int_{t+h}^{\eta} H(s,X_s,Y_s)ds].\] Due to the boundedness of $H$ and using that $|\phi'|_\infty \leq 3$ and $\eta \in [t+2h,t+3h]$, we have
\[ |A(h)|\leq 8\|H\|_\infty h,\] and the result follows. 
\end{proof}

\begin{Proposition}
The value functions $V^+,V^-$ are Lipschitz in $t$ : for all $(t,p,q)\in[0,T]\times\Delta(I)\times\Delta(J)$ and $h\in [0,T-t]$,
\[ |W(t+h,p,q)-W(t,p,q)|\leq 8Ch,\] 
for $W=V^+,V^-$.
\end{Proposition}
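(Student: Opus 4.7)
The plan is to derive both bounds $|V^+(t+h,p,q)-V^+(t,p,q)|\leq 8Ch$ (and their analogs for $V^-$) directly from the two parts of Lemma \ref{th}.

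For $V^+(t,p,q)\leq V^+(t+h,p,q)+8Ch$, I would fix $\varepsilon>0$ and choose an $\varepsilon$-optimal strategy $\alpha^\varepsilon\in\AR(t+h)$, so that
\[\sup_{v\in\VR^s(t+h)}J(t+h,p,q,\alpha^\varepsilon(v),v)\leq V^+(t+h,p,q)+\varepsilon.\]
Applying Lemma \ref{th}(1) with this $\alpha^\varepsilon$ produces, for each $v\in\VR^s(t)$, a strategy $\tilde\alpha\in\AR(t)$ and a control $\tilde v\in\VR^s(t+h)$ with
\[J(t,p,q,\tilde\alpha(v),v)\leq J(t+h,p,q,\alpha^\varepsilon(\tilde v),\tilde v)+8Ch\leq V^+(t+h,p,q)+\varepsilon+8Ch.\]
To turn this pointwise-in-$v$ bound into a bound on $\sup_v J(t,p,q,\tilde\alpha(v),v)$, I would inspect the construction in the proof of Lemma \ref{th} and observe that the time-change $\phi$ can be set up to depend only on the grid of $\alpha^\varepsilon$ (and not on the grid of $v$): the Borel formula in Definition \ref{admissible} makes $\alpha^\varepsilon$ meaningful on any measurable trajectory, not merely on simple ones, so the identity underlying Lemma \ref{th} continues to hold even when $\tilde v=T_\phi(v)$ is not itself simple. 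With $\tilde\alpha$ thus independent of $v$, I would take $\sup_v$ on the left, then the infimum over $\AR(t)$, and finally send $\varepsilon\to 0$.

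The reverse inequality $V^+(t+h,p,q)\leq V^+(t,p,q)+8Ch$ follows by exactly the same argument, using Lemma \ref{th}(2) in place of (1). The two bounds for $V^-$ are obtained analogously, after noting that Lemma \ref{th} has a symmetric counterpart in which the roles of the two players (and of $\AR,\BR$, $\UR^s,\VR^s$, $X,Y$) are interchanged; its proof is identical after relabelling. The step I anticipate as the main obstacle is the decoupling of $\tilde\alpha$ from $v$ in the construction of Lemma \ref{th}: the proof of the lemma as written chooses $\phi$ using both the grid of $\alpha$ and that of $v$, so some care is needed to verify that the same $8Ch$ error estimate survives when $\phi$ is built from $\alpha^\varepsilon$'s grid alone and $\alpha^\varepsilon$ is applied to the possibly non-simple trajectory $\tilde v$. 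Once this decoupling is established, the remainder of the argument is a routine quantifier manipulation.
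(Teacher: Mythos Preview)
Your approach is the same as the paper's---both derive the Lipschitz bound from Lemma~\ref{th} and its symmetric counterpart---and you are in fact more careful than the paper, which simply asserts that the proposition ``follows directly'' without addressing the quantifier issue you flag. You are right that, as stated, Lemma~\ref{th} produces $\tilde\alpha$ depending on $v$ (through the choice of $\phi$), so one cannot immediately take $\sup_v$ against a single strategy.

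However, your proposed resolution has a gap. The claim that ``the Borel formula in Definition~\ref{admissible} makes $\alpha^\varepsilon$ meaningful on any measurable trajectory'' is not justified: the maps $\alpha^j$ are Borel maps with domain $\Omega_{t+h,t_j}\times V^s_{t+h,t_j}$, so $\alpha^\varepsilon$ is only defined on trajectories that are piecewise constant with \emph{rational} grid. If you build $\phi$ from the grid of $\alpha^\varepsilon$ alone, then for a general $v\in\VR^s(t)$ the transported control $\tilde v=T_\phi(v)$ is piecewise constant but its jump times in $[t+h,\eta]$ need not be rational, so $\tilde v(\omega)\notin V^s_{t+h}$ and $\alpha^\varepsilon(\tilde v)$ is undefined. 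Even granting some extension, the $\varepsilon$-optimality of $\alpha^\varepsilon$ is only asserted against simple $v'\in\VR^s(t+h)$, so the bound $J(t+h,p,q,\alpha^\varepsilon(\tilde v),\tilde v)\leq V^+(t+h,p,q)+\varepsilon$ is not available.

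Your instinct that the decoupling is the crux is correct, and the argument can be repaired, but it needs an additional ingredient: either show that the value is unchanged if one enlarges $\VR^s$ to piecewise-constant controls with arbitrary (not only rational) grids---which requires a density/continuity argument for the payoff---or else approximate $\tilde v$ by an element of $\VR^s(t+h)$ and control the error uniformly. Neither step is automatic in this model (the control sets are unbounded and the strategies react to the full path of the opponent's control), so this is genuine work that both your proposal and the paper leave implicit.
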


\begin{proof} For $W=V^+$, the proposition follows directly from Lemma \ref{th}. For $W=V^-$, we use  the symmetric result to Lemma \ref{th} where $(\tilde\alpha(v),v)$ is replaced by $(u,\tilde\beta(u))$ and $(\alpha(\tilde v),\tilde v)$ by $(\tilde u,\beta(\tilde u))$.
\end{proof}

\pa The Lipschitz continuity of the values with respect to $p$ and $q$ is based on the following proposition.

\pa
\begin{Proposition} 
\label{lip}
(Lipschitz continuity in $p$)
Let $t\in[0,T]$, $u\in\UR(t)$ and $p,\bar p\in\Delta(I)$.
Then there exists some constant $\bar C>0$ which depends only on $|I|$, such that, for all $s\in[t,T]$,
\[ \E_t\left[|X^{t,p,u}_s-X^{t,\bar p,u}_s|\right]\leq \bar C|p-\bar p|.\]
\end{Proposition}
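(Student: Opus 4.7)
The main difficulty is that the diffusion coefficient $\sigma(x,u) = P_x u$ depends on $x$ only through the support $S(x)$ and is therefore discontinuous, so classical Lipschitz-type estimates for SDEs do not apply directly. My strategy is to proceed by strong induction on $n := |S(p)| + |S(\bar p)| \in \{2,\ldots,2|I|\}$. The base case $n = 2$ corresponds to $p,\bar p$ being vertices of $\Delta(I)$, at which the tangent spaces are trivial, so both processes are absorbed at their initial points (by Proposition \ref{prop1}) and the inequality holds with constant $1$.

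For the inductive step, I first reduce to the case $S(p) \cup S(\bar p) = I$: since $X^i$ is a non-negative martingale with mean $p_i$, having $p_i = 0$ forces $X^i \equiv 0$; so both processes take values in the smaller simplex $\Delta(S(p) \cup S(\bar p))$, where the induction hypothesis applies. I then introduce the stopping time
\[
\tau := \inf\{s \geq t \,:\, S(X_s) \subsetneq S(p) \text{ or } S(\bar X_s) \subsetneq S(\bar p)\} \wedge T,
\]
after which $|S(X_\tau)| + |S(\bar X_\tau)| \leq n-1$. Provided I can bound $\E_t[|X_\tau - \bar X_\tau|]$ by a constant (depending only on $|I|$) times $|p - \bar p|$, the strong Markov property combined with the induction hypothesis on $[\tau,T]$ will yield the full bound.

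The key step is establishing this estimate on $[t,\tau]$. When $S(p) = S(\bar p)$, both processes share the same projection $P_{S(p)}$ on $[t,\tau)$, so $X_s - \bar X_s \equiv p - \bar p$ is constant and the bound is immediate. When $S(p) \neq S(\bar p)$, I would exploit the explicit formula \eqref{proj}: a direct computation gives, for every $i \in S(p) \cap S(\bar p)$ and $s \in [t,\tau]$,
\[
X^i_s - \bar X^i_s = (p_i - \bar p_i) + M_s, \qquad M_s := \int_t^s \left(\tfrac{1}{|S(\bar p)|}\sum_{j\in S(\bar p)} u^j_r - \tfrac{1}{|S(p)|}\sum_{j \in S(p)} u^j_r\right) dB^1_r,
\]
where the scalar martingale $M_s$ does not depend on $i$. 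The constraint $\sum_i X^i_s = \sum_i \bar X^i_s = 1$ then expresses $M_s$ in terms of the ``excess'' coordinates $\sum_{i \in S(p)\setminus S(\bar p)} X^i_s$ and $\sum_{i \in S(\bar p)\setminus S(p)} \bar X^i_s$, whose expectations are the partial sums $\sum_{i \in S(p)\setminus S(\bar p)} p_i$ and $\sum_{i\in S(\bar p)\setminus S(p)} \bar p_i$; Cauchy-Schwarz bounds each of these by $\sqrt{|I|}\,|p-\bar p|$. Summing coordinate-wise then gives $\E_t[|X_s - \bar X_s|] \leq C|p-\bar p|$ on $[t,\tau]$ with $C$ depending only on $|I|$.

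The main obstacle I anticipate is precisely the case $S(p) \neq S(\bar p)$: one must carefully exploit the algebraic structure of the projection matrices $P_{S(p)},P_{S(\bar p)}$ (rather than any smoothness) to synchronize the coordinates in $S(p)\cap S(\bar p)$ via the single martingale $M_s$, and then close the loop using the affine constraints satisfied by $\Delta(I)$-valued martingales. Iterating the resulting recursion $\bar C_n \leq C\,\bar C_{n-1}$ along the induction (with $n \leq 2|I|$ and $C$ depending only on $|I|$) yields a final constant of the order $C^{2|I|-2}$, depending only on $|I|$, as claimed.
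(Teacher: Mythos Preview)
Your proposal is correct and follows essentially the same route as the paper's proof: both arguments introduce the stopping time at which the combined support size $|S(X_s)|+|S(\bar X_s)|$ drops, exploit the explicit projection formula \eqref{proj} to show that for $i\in S(p)\cap S(\bar p)$ the quantity $X^i_s-\bar X^i_s-(p_i-\bar p_i)$ is a single scalar martingale independent of $i$, and then use the simplex constraint $\sum_i(X^i_s-\bar X^i_s)=0$ together with the martingale property of the ``excess'' coordinates to bound it. The only cosmetic difference is that the paper iterates the one-step estimate $\E_t[\,|X_{\tau^{k+1}}-\bar X_{\tau^{k+1}}|\mid\FR_{t,\tau^k}]\le (2+\sqrt{|I|})|I|\,|X_{\tau^k}-\bar X_{\tau^k}|$ directly up to $\tau^{2|I|-1}=T$ and then invokes Jensen (submartingality of $|X-\bar X|$) to cover intermediate times, whereas you phrase the same recursion as strong induction on $n=|S(p)|+|S(\bar p)|$; the resulting constants are of the same order.
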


\begin{proof}
Recall the notation
$S(p)=\{ i\in I, p_i>0\}$ and 
let us define $A_s=|S(X^{t,p,u}_s)|+|S(X^{t,\bar p,u}_s)|, s\in[t,T]$. $A$ is non-increasing, c\`adl\`ag, integer-valued process. Set $\tau^0=t$ and $\tau^{k+1}:= \inf \{ s\geq \tau^k | A_s <A_{\tau^k}\}\wedge T$. As in the proof of Theorem \ref{prop1}, this defines a non-decreasing sequence of stopping times which, at least for $k\geq 2|I|-1$ is constant equal to $T$.\\
Remark that, for all $t\leq s\leq T$, on $ \{ s<\tau^1\}$, $S(X^{t,p,u}_s)=S(p), \; S(X^{t,\bar p,u}_s)=S(\bar p)$ and  therefore $P_{X^{t,p,u}_s}=P_p$ and  $P_{X^{t,\bar p,u}_s}=P_{\bar p}$. \\
Set 
\[ \xi:=X^{t,p,u}_{\tau^1}=p+\int_t^{\tau^1}P_pu_sdB^1_s \;\mbox{ and }\;
\bar\xi:=X^{t,\bar p,u}_{\tau^1}=\bar p+\int_t^{\tau^1}P_{\bar p}u_sdB^1_s.\]
\pa We have to compute $\E_t[|\xi_i-\bar\xi_i|]$ for all $i\in I$:
\pa If $i\notin S(p)\cup S(\bar p)$,
\begin{equation}
\label{i1}
 \E_t[|\xi_i-\bar\xi_i|]=0=|p_i-\bar p_i|.
\end{equation}
\pa
If $i \in S(p)\setminus S(\bar p)$, 
$|\xi_i-\bar\xi_i| = \xi_i= p_i+ \sum_{j=1}^n \int_t^{\tau^1} (P_{p}u_s)_{i,j} dB^{1,j}_s$
and therefore 
\begin{equation}
\label{i2}
\E_t[|\xi_i-\bar\xi_i|]=p_i=|p_i-\bar p_i|.
\end{equation}
\pa In the same way, if $i \in S(\bar p)\setminus S(p)$, then 
\begin{equation}
\label{i3}
 \E_t[|\xi_i-\bar\xi_i|]=\bar{p}_i=|p_i-\bar p_i|.
 \end{equation}
\pa Finally define $I_1= S(p)\cap S(\bar{p})$. The explicit projection-formula  (\ref{proj}) leads to the following: for all $i\in I_1$,
\[\xi_i- \bar\xi_i= p_i- \bar{p}_i -\int_t^s \langle  \left( \frac{1}{|S(p)|}\sum_{i'\in S(p)}(u_s)_{i',j} -\frac{1}{|S(\bar{p})|}\sum_{i'\in S(\bar{p})}(u_s)_{i',j} \right)_{j \in I}  , dB^1_r   \rangle. \]
This implies that for all $i' \in I_1$, 
\[ \xi_i- \bar\xi_i -(p_i-\bar{p}_i)=\xi_{i'}- \bar\xi_{i'}-(p_{i'}-\bar{p}_{i'}).\]
If $I_1\neq \emptyset$, using this equality together with the fact that $\sum_{i'\in I} (\xi-\bar\xi)_{i'}=0$, we  deduce that, for all $i\in I_1$:
\begin{align*}
|\xi_i- \bar\xi_i| &\leq | p_i-\bar{p}_i|+ | \xi_i- \bar\xi_i -(p_i-\bar{p}_i)| \\
&= | p_i-\bar{p}_i|+\frac{1}{|I_1|}\left| \sum_{i' \in I_1}\left(\xi_{i'}- \bar\xi_{i'} -(p_{i'}-\bar{p}_{i'})  \right) \right| \\
&\leq 2| p-\bar{p}|+\frac{1}{|I_1|}\left| \sum_{i' \in I_1}\left(\xi_{i'}- \bar\xi_{i'} \right)\right| \\
& \leq  2|p-\bar{p}|+ \frac{1}{|I_1|}\sum_{i' \in I\setminus I_1} |\xi_{i'}- \bar\xi_{i'}| \\
& =  2|p-\bar p|+ \frac{1}{|I_1|}\left( \sum_{i' \in S(p)\setminus S(\bar{p})}\xi_{i'} +  \sum_{i' \in S(\bar p)\setminus S(p)} \bar\xi_{i'} \right) .
\end{align*}
We have 
\begin{align*}
 \E_t\left[ \frac{1}{|I_1|}\left( \sum_{i' \in S(p)\setminus S(\bar{p})}\xi_{i'} +  \sum_{i' \in S(\bar p)\setminus S(p)} \bar\xi_{i'} \right) \right]&=  \frac{1}{|I_1|}\left( \sum_{i' \in S(p)\setminus S(\bar{p})}|p_{i'}-\bar{p}_{i'}| +  \sum_{i' \in S(\bar p)\setminus S(p)} |p_{i'}-\bar{p}_{i'}| \right)  \\
 &\leq \sqrt{|I|} |p-\bar{p}|.
 \end{align*}
It follows that, for $i\in I_1$,
\begin{equation}
\label{i4}
\E_t[|\xi_i- \bar\xi_i|] \leq (2+\sqrt{|I|}) |p-\bar{p}|.
\end{equation} 
Putting together (\ref{i1})-(\ref{i4}), we get
\[\E_t[|\xi- \bar\xi|] \leq (2+\sqrt{|I|})|I||p-\bar p|.\]
By induction, and using the same arguments as above, we have for all $k$
\[ \E_t[|X^{t,p,u}_{\tau^{k+1}}- X^{t,\bar p,u}_{\tau^{k+1}}| \,|\, \FR_{t,\tau^k} ] \leq (2+\sqrt{|I|})|I| |X^{t,p,u}_{\tau^{k}}- X^{t,\bar p,u}_{\tau^{k}}|.\]
However, for $k \geq 2|I|-1$, we have $\tau^k=T$, so that 
\[ \E_t[|X^{t,p,u}_{T}- X^{t,\bar p,u}_{T}|] \leq ((2+\sqrt{|I|})|I|)^{2|I|-1}|p-\bar p|.\]
Using Jensen's inequality, we deduce finally that:
\[ \forall s\in[t,T], \;\E_t[|X^{t,p,u}_{s}- X^{t,\bar p,u}_{s}|] \leq ((2+\sqrt{|I|})|I|)^{2|I|-1}|p-\bar p|.\]
\end{proof}

\begin{Proposition}
The value functions $V^+,V^-$ are $C\bar CT$-Lipschitz in $p$ and in $q$.
\end{Proposition}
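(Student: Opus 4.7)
The plan is to reduce the claim directly to Proposition \ref{lip} via a standard "sup-of-differences" bound on $\inf\sup$, exploiting the fact that admissible strategies (as defined in Definition \ref{admissible}) are Borel maps $\alpha:\Omega_t\times V^s_t\to U^s_t$ and $\beta:\Omega_t\times U^s_t\to V^s_t$ that do not depend on the initial data $(p,q)$. Consequently, for a fixed pair $(\alpha,\beta)\in\AR(t)\times\BR(t)$, the pair of controls $(u^{\alpha,\beta},v^{\alpha,\beta})$ produced by the fixed-point Lemma \ref{fixedpoint} does not depend on $(p,q)$ either, so only the processes $X^{t,p,u}$ and $Y^{t,q,v}$ feel the change.

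First I would treat the Lipschitz continuity in $p$ with $(t,q)$ fixed. For any $p,\bar p\in\Delta(I)$ and any simple controls $u\in\UR^s(t)$, $v\in\VR^s(t)$, the Lipschitz assumption on $H$ together with Proposition \ref{lip} yields
\begin{align*}
|J(t,p,q,u,v)-J(t,\bar p,q,u,v)|
&\leq \E_t\!\left[\int_t^T \bigl|H(s,X^{t,p,u}_s,Y^{t,q,v}_s)-H(s,X^{t,\bar p,u}_s,Y^{t,q,v}_s)\bigr|\,ds\right]\\
&\leq C\int_t^T \E_t\bigl[|X^{t,p,u}_s-X^{t,\bar p,u}_s|\bigr]\,ds \;\leq\; C\bar C\,T\,|p-\bar p|.
\end{align*}
Since this bound is uniform in $(u,v)$, it in particular holds along $(u^{\alpha,\beta},v^{\alpha,\beta})$ for every $(\alpha,\beta)$. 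Applying the elementary inequality $|\inf_\alpha\sup_\beta f-\inf_\alpha\sup_\beta g|\leq \sup_{\alpha,\beta}|f-g|$ (and its analog with $\sup\inf$) to $f=J(t,p,q,\cdot,\cdot)$ and $g=J(t,\bar p,q,\cdot,\cdot)$ immediately gives
\[
|V^+(t,p,q)-V^+(t,\bar p,q)|\leq C\bar C\,T\,|p-\bar p|,\qquad |V^-(t,p,q)-V^-(t,\bar p,q)|\leq C\bar C\,T\,|p-\bar p|.
\]

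For the Lipschitz continuity in $q$, I would invoke the exact analog of Proposition \ref{lip} for the $Y$-process, which holds with the same constant $\bar C$ (depending only on $|J|$ in that case, but one can take the maximum of the two constants) by a symmetric argument (the proof of Proposition \ref{lip} used only the projection structure of $\tau$, identical to that of $\sigma$). The same estimate as above then bounds the $q$-variation.

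There is no real obstacle here: the only point that might look delicate is the independence of $(u^{\alpha,\beta},v^{\alpha,\beta})$ from $(p,q)$, but this is built into Definition \ref{admissible} since strategies are defined on $\Omega_t\times V^s_t$ (resp.\ $\Omega_t\times U^s_t$) with no reference to the initial state. Everything else is an immediate application of Proposition \ref{lip} and the Lipschitz bound on $H$.
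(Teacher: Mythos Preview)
Your proof is correct and follows exactly the approach the paper intends: the paper's own proof simply says the Lipschitz continuity in $p$ ``follows classically from Proposition \ref{lip}'' and the $q$-case follows by symmetry, and you have merely written out that classical argument in detail. Your remark that the strategy sets and the fixed-point pair $(u^{\alpha,\beta},v^{\alpha,\beta})$ do not depend on $(p,q)$ is the right observation making the uniform bound go through.
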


\begin{proof} The Lipschitz continuity in $p$ follows classically from the proposition \ref{lip}. And by the symmetric roles played by $p$ and $q$, the Lipschitz continuity in $q$ also follows.
\end{proof}

\section{Dynamic programming}

\noi The standard procedure would be to prove now the properties of convexity/concavity  of the value functions. The reason why we start with the dynamic programming principle (DPP) is that its proof is very standard, while the convexity/concavity are not only harder to establish, but borrow also some of the techniques developed for the  DPP, which, by this way, will be firstly exposed in this familiar setting.

\begin{Proposition}\label{propdpp}
Let $(t,p,q)\in [0,T)\times\Delta(I)\times\Delta(J)$ and $h>0 $ such that $t+h \in (t,T] \cap \Q$.
Then it holds that
\begin{equation}
\label{dpp}
V^-(t,p,q)\geq \sup_{\beta\in\BR(t)}\inf_{u\in\UR^s(t)} \E_t\left[
\Int_t^{t+h} H(s,X^{t,p,u}_s,Y^{t,q,\beta(u)}_s)ds+ V^-(t+h,X^{t,p,u}_{t+h},Y^{t,q,\beta(u)}_{t+h})\right].
\end{equation}
\begin{equation}
\label{dpp2}
V^+(t,p,q)\leq \inf_{\alpha\in\AR(t)}\sup_{v\in\VR^s(t)} \E_t\left[
\Int_t^{t+h} H(s,X^{t,p,\alpha(v)}_s,Y^{t,q,v}_s)ds+ V^+(t+h,X^{t,p,\alpha(v)}_{t+h},Y^{t,q,v}_{t+h})\right].
\end{equation}
\end{Proposition}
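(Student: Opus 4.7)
By the symmetric roles of the two players it suffices to prove \eqref{dpp}; inequality \eqref{dpp2} follows by exchanging $\alpha$ and $\beta$ and inverting sup/inf. The plan is a classical concatenation argument, carefully adapted to our pathwise simple framework so that the concatenated strategies remain admissible (i.e.\ simple with a rational grid, and Borel in the opponent's trajectory).

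Fix $\beta\in\BR(t)$ and $\varepsilon>0$. Using the Lipschitz continuity of $V^-$ established in Section~3 and the compactness of $\Delta(I)\times\Delta(J)$, I partition this product into finitely many Borel cells $C_1,\dots,C_N$ of diameter at most $\varepsilon$, with distinguished points $(p_k,q_k)\in C_k$. For each $k$, the very definition of $V^-(t+h,p_k,q_k)$ as a supremum provides $\beta^k\in\BR(t+h)$ with
\[ \inf_{u'\in\UR^s(t+h)} J(t+h,p_k,q_k,u',\beta^k(u'))\ \geq\ V^-(t+h,p_k,q_k)-\varepsilon.\]
Since $t+h\in\Q$, the rational grid of $\beta$ and those of the $\beta^k$'s may be refined to a common rational grid passing through $t+h$. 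Introducing the shift $\theta_{t+h}:\Omega_t\to\Omega_{t+h}$ defined by $\theta_{t+h}(\omega)(s)=\omega(s)-\omega(t+h)$, I concatenate the strategies by letting $\tilde\beta(\omega,\u)(s)=\beta(\omega,\u)(s)$ on $[t,t+h)$ and, for $s\in[t+h,T)$,
\[ \tilde\beta(\omega,\u)(s)=\sum_{k=1}^N \ind_{C_k}\bigl(X^{t,p,\u}_{t+h}(\omega),Y^{t,q,\beta(\u)}_{t+h}(\omega)\bigr)\,\beta^k\bigl(\theta_{t+h}\omega,\u|_{[t+h,T]}\bigr)(s).\]
Because only finitely many $\beta^k$ are involved, the indicator factor is Borel in $(\omega,\u)|_{[t,t+h]}$ and each $\beta^k$ is Borel on $\Omega_{t+h}\times U^s_{t+h}$, so $\tilde\beta$ lies in $\BR(t)$ for the refined grid.

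Now fix $u\in\UR^s(t)$ and set $(X_s,Y_s):=(X^{t,p,u}_s,Y^{t,q,\tilde\beta(u)}_s)$. By construction $Y\equiv Y^{t,q,\beta(u)}$ on $[t,t+h]$. Conditionally on $\FR_{t,t+h}$, $(B_s-B_{t+h})_{s\in[t+h,T]}$ is a standard Brownian motion independent of the past, and by the uniqueness of strong solutions with $\FR_{t,t+h}$-measurable initial data (Remark following Proposition \ref{prop1}) the restriction of $(X,Y)$ to $[t+h,T]$ coincides $\P_t$-a.s.\ with the solution started at $(X_{t+h},Y_{t+h})$ driven by $(u|_{[t+h,T]},\beta^{k(\omega)}(u|_{[t+h,T]}))$, where $k(\omega)$ is the unique index with $(X_{t+h},Y_{t+h})\in C_{k(\omega)}$. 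Conditioning at time $t+h$, using the choice of $\beta^k$, the Lipschitz regularity of $V^-$ on each cell (which has diameter $\leq\varepsilon$), and Proposition \ref{lip} (together with the Lipschitz estimate for $J$ in the initial condition $(p,q)$ that follows from it) to pass from $(p_{k(\omega)},q_{k(\omega)})$ to $(X_{t+h},Y_{t+h})$, one obtains a constant $C'$ independent of $u$ and $\beta$ such that
\[ J(t,p,q,u,\tilde\beta(u))\ \geq\ \E_t\!\left[\int_t^{t+h}\!H(s,X^{t,p,u}_s,Y^{t,q,\beta(u)}_s)ds+V^-\!\bigl(t+h,X^{t,p,u}_{t+h},Y^{t,q,\beta(u)}_{t+h}\bigr)\right]-C'\varepsilon.\]
Taking the infimum over $u\in\UR^s(t)$, then the supremum over $\beta\in\BR(t)$, and finally letting $\varepsilon\to 0$ yields \eqref{dpp}.

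The main obstacle will be the rigorous justification of the flow (strong Markov) property at $t+h$ for the SDEs \eqref{sdeX}--\eqref{sdeY}: the coefficients are only measurable, so the standard theorems do not apply directly and one must instead reproduce the pathwise iterative construction of Proposition \ref{prop1} from the random $\FR_{t,t+h}$-measurable initial data $(X_{t+h},Y_{t+h})$, checking that the shifted Brownian motion is independent of $\FR_{t,t+h}$. The remaining ingredients—measurable concatenation at the finite level, checking that the refined grid is still rational, and bookkeeping of the $O(\varepsilon)$ discretization error—are essentially routine once this flow property is in hand.
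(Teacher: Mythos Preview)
Your proposal is correct and follows essentially the same concatenation argument as the paper: partition $\Delta(I)\times\Delta(J)$ into small cells, pick $\varepsilon$-optimal continuation strategies $\beta^k\in\BR(t+h)$ on each cell, glue them to the given first-stage strategy at time $t+h$ via the indicator of $(X_{t+h},Y_{t+h})\in C_k$, and control the discretization error through Proposition~\ref{lip} and the Lipschitz regularity of $V^-$. The technical point you single out as the main obstacle---the pathwise flow property at $t+h$ for the SDEs \eqref{sdeX}--\eqref{sdeY}, needed both to make the indicator a Borel function of $(\omega,\u)|_{[t,t+h]}$ and to identify the continuation dynamics---is precisely what the paper isolates and proves separately as Lemma~\ref{SDE-borel} in the appendix, by redoing the iterative construction of Proposition~\ref{prop1} pathwise for simple trajectories.
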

\begin{proof}
We only prove \eqref{dpp}, the proof of \eqref{dpp2} being similar.
\pa 
We denote by $RHS$ the right-hand side of equation \eqref{dpp}.
For $\varepsilon>0$, let $\beta^0\in\BR(t)$ be $\varepsilon$-optimal for $RHS$. 
We can find $(O_1,\ldots,O_M)$ a measurable partition of $\Delta(I)\times\Delta(J)$ with diameter smaller than  $\varepsilon$. For each $m\in\{ 1,\ldots, M\}$, we pick some couple $(p^m,q^m)\in O_m$ and choose $\beta^m\in\BR(t+h)$ which is $\varepsilon$-optimal for $V^-(t+h,p^m,q^m)$. 
Using Lemma \ref{SDE-borel}, we have almost surely
\[(X^{t,p,\u}_{t+h},Y^{t,q,\beta^0(\u)}_{t+h})= (\Phi^X_{t,t+h}(p,\omega|_{[t,t+h]},\u|_{[t,t+h]})(t+h),\Phi^Y_{t,t+h}(q,\omega|_{[t,t+h]},\beta^0(\u)|_{[t,t+h]})(t+h)),\]
where $(\Phi^X_{t,t+h},\Phi^Y_{t,t+h})$ are  Borel maps. In the following we will identify $(X^{t,p,\u}_{t+h},Y^{t,q,\beta^0(\u)}_{t+h})$ with these Borel maps to simplify notations.  Using this convention and the fact that $\beta^0(\u)|_{[t,t+h]}$ is a measurable function of $\u|_{[t,t+h]}$, we can define a new strategy $\tilde\beta$ by setting, for all $(\omega,\u)\in \Omega_t\times U^s_t$,
\[\tilde\beta(\omega,\u)(s)=\left\{\begin{array}{ll}
\beta^0(\omega,\u)(s),& \mbox{ if }t\leq s<t+h,\\
\beta^m(\omega|_{[t+h,T]}-\omega(t+h),\u|_{[t+h,T]})(s),&\mbox{ if } (s,\omega)\in[t+h,T]\\
& \mbox{ and }(X^{t,p,\u}_{t+h},Y^{t,q,\beta^0(\u)}_{t+h})(\omega)\in O_m.
\end{array}
\right.\]
Let us show that it is also admissible in the sense of  Definition \ref{admissible}: 
Let $\pi_1,...,\pi_M$ denote the grid associated to $\beta^1,\ldots,\beta^M$ with $\pi_m=\{t+h=t^m_0<...<t^m_{N_m}=T\}$. We consider a time grid $\pi=\{ t_0=t<\ldots<t_N\}$  which coincides on $[t,t+h)$ with the time grid associated to $\beta^0$ and contains the time grids $\pi_1,...,\pi_m$.
Then we can write
\[ \tilde\beta(\omega,\u)=\sum_{j=0}^{N-1}\ind_{[t_j,t_{j+1})}(s)\tilde\beta^j((\omega,\u)|_{[t,t_j]}),\]
with, for all $(\omega,\u)\in\Omega_{t,t_{j+1}}\times U^s_{t,t_{j+1}}$,
\[ \tilde\beta^j(\omega,\u)=\sum_{m=1}^M\ind_{O_m}(X^{t,p,\u}_{t+h},Y^{t,q,\beta^0(\u)}_{t+h})\ind_{[t^m_{k(m,j)},t^m_{k(m,j)+1})}(s)\beta^{m,k(m,j)}((\omega|_{[t+h,t_{k(m,j)}]}-\omega(t+h),\u|_{[t+h,t_{k(m,j)}]}),\]
where
$\beta^m(\omega,\u)(s)=\sum_{k=0}^{N_m-1}\ind_{[t^m_k,t^m_{k+1})}(s)\beta^{m,k}((\omega,\u)|_{[t+h,t^m_k]})$ is the decomposition of $\beta^m$ and $k(m,j)$ is the unique integer such that $[t_j,t_{j+1}) \subset [t^m_{k(m,j)},t^m_{k(m,j)+1})$.
\pa 
Let us now use the identification $\Omega_t=\Omega_{t,t+h}\times\Omega_{t+h}$ with $\omega=\omega_1\otimes\omega_2$, where $\omega_1:=\omega|_{[t,t+h]}$ and $\omega_2:=\omega|_{[t+h,T]}-\omega(t+h)$.
Let us fix $u\in\UR^s(t)$. Then, for all $\omega_1\in\Omega_{t,t+h}$, the map 
\[ (s,\omega_2)\in[t+h,T]\times\Omega_{t+h}\rightarrow u(\omega_1\otimes\omega_2,s)\]
 defines an element of $\UR^s(t+h)$ denoted by $u(\omega_1)$. It follows from Lemma \ref{SDE-borel} that $X^{t,p,u}_{t+h}$ is almost surely equal to a measurable function of $\omega_1$ denoted $X(\omega_1)$ and that we can write
\begin{equation}
\label{X}
  \forall s \in [t+h,T], \; X^{t,p,u}_s(\omega_1\otimes \omega_2)= X^{t+h,X(\omega_1),u(\omega_1)}_s(\omega_2),
  \end{equation}
In the same way, $Y^{t,q,\beta^0(u)}_{t+h}(\omega)=Y(\omega_1)$ almost surely for some Borel map $Y(\omega_1)$.
Then the sets $A_m=\{ \omega_1\in\Omega_{t,t+h}, (X,Y)(\omega_1)\in O_m\}$ are well defined and form a partition of $\Omega_{t,t+h}$. Moreover we have, on each $A_m$,
\begin{equation}
\label{Y}
\forall s \in [t+h,T], \; Y^{t,q,\tilde\beta(u)}_s(\omega_1\otimes \omega_2)= Y^{t+h,Y(\omega_1),\beta^m(u(\omega_1))}_s(\omega_2).
\end{equation}
Note that the right-hand sides of (\ref{X}) and (\ref{Y}) are $\P_t$-almost surely equal to jointly measurable functions in $(\omega_1,\omega_2)$ (see Lemma \ref{SDE-borel}).
Proposition \ref{lip} implies that, for all $s\in[t+h,T]$, for all $m\in\{ 1,\ldots,M\}$  and $\omega_1\in A_m$,
\[
\E_{t+h}[|X^{t+h,X(\omega_1),u(\omega_1)}_s-X^{t+h,p^m,u(\omega_1)}_s|]\leq \bar C\varepsilon, \]
\[\E_{t+h}[|Y^{t+h,Y(\omega_1),\beta^m(u(\omega_1))}_s-Y^{t+h,q^m,\beta^m(u(\omega_1))}_s|]\leq \bar C \varepsilon.
\]
It follows that, setting $\tilde C=C\bar CT$, we have :
\begin{align*}
J(t,p,q,u,\tilde{\beta})\geq & \E_t\left[\Int_t^{t+h}H(s,X^{t,p,u}_s,Y_s^{t,q,\beta^0(u)})ds\right] +\\
 \Int_{\Omega_{t,t+h}}\sum_m\ind_{A_m}(\omega_1)&\E_{t+h}\left[\Int_{t+h}^TH\left(s,X_s^{t+h,p^m,u(\omega_1,\cdot)},Y^{t+h,q^m,\beta^m(u(\omega_1))}_s\right)ds\right]d\P_{t,t+h}(\omega_1)- 2\tilde C\varepsilon\\
\geq &\E_t\left[\Int_t^{t+h}H(s,X^{t,p,u}_s,Y_s^{t,q,\beta^0(u)})ds+\sum_m\ind_{A_m}V^-(t+h,p^m,q^m)\right]-(2\tilde C+1)\varepsilon\\
 \geq &  \E_t\left[\Int_t^{t+h}H(s,X^{t,p,u}_s,Y_s^{t,q,\beta^0(u)})ds+V^-(t+h,X^{t,p,u}_{t+h},Y^{t,q,\beta^0(u)}_{t+h})\right]-(3\tilde C+1)\varepsilon\\
\geq& RHS-(3\tilde C+2)\varepsilon.
\end{align*}
The result follows as $\varepsilon$ can be chosen arbitrarily small.
\end{proof}

\section{Convexity properties}

A crucial step of our argumentation is to prove that $V^+$ and $V^-$ are convex in $p$ and concave in $q$. In opposition to previous works, where we were able to adapt without too much difficulty the splitting arguments of the repeated game setting, we cannot avoid here to have recourse to some technical machinery. We shall hide a part of this technicality in an appendix.

\pa The following lemma is a key argument for the splitting procedure. It relies on the predictable representation property for continuous martingales and shows how  a stochastic integral with respect to a Brownian motion of type (\ref{sdeXY}) can  mimic as close as possible the jump of a splitting martingale.

\begin{Lemma}
\label{martrep}
Let $p^1,p^2$ belong to the relative interior of $\Delta(I)$ and $\lambda_1,\lambda_2\in(0,1)$ with $\lambda_1+\lambda_2=1$. Set $p=\lambda_1p_1+\lambda_2p_2$.
For $h\in(0,T-t]$, let $Z$ be a $\sigma(B^1_r, r\in[t,t+h])$-measurable random variable such that $\P_t[Z=p^i]=\lambda_i,i \in\{ 1,2\}$. Then, for all $\varepsilon>0$, there exists $\bar u\in\UR^s(t)$ 
such that
\begin{equation}
\label{XZ}
 \E_t\left[|X^{t,p,\bar u}_{t+h}-Z|\right]\leq\varepsilon.
 \end{equation}
 \pa An analogue result holds for $q^1,q^2$ in the relative interior of $\Delta(J)$.
\end{Lemma}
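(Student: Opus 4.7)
The plan is to reduce the problem to the Brownian martingale representation: first build a continuous martingale $M$ on $[t,t+h]$ with $M_t=p$ and $M_{t+h}=Z$, extract its volatility $\phi$, approximate $\phi$ by a simple control $\bar u$ in $L^2$, and then show that on an event of probability close to one the controlled process $X^{t,p,\bar u}$ (defined with the projection) coincides with the bare stochastic integral.

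First I would set $M_s:=\E_t[Z\mid \FR^{t+h}_{t,s}]$ for $s\in[t,t+h]$, which admits a continuous version since $(\FR^{t+h}_{t,s})$ is a Brownian filtration. Writing $\lambda_i(s):=\P_t[Z=p^i\mid \FR^{t+h}_{t,s}]$, one gets $M_s=\lambda_1(s)p^1+\lambda_2(s)p^2$, so $M$ stays in the compact segment $K:=[p^1,p^2]\subset\mathrm{relint}(\Delta(I))$, and I can fix $\delta>0$ such that the closed $\delta$-neighborhood $K_\delta$ of $K$ is still contained in $\mathrm{relint}(\Delta(I))$. By martingale representation, there exists a predictable $\phi$ with $M_s=p+\int_t^s\phi_r\,dB^1_r$, and the boundedness of $M$ together with It\^o's isometry give $\E_t[\int_t^{t+h}|\phi_r|^2\,dr]<\infty$. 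Extending $\phi$ by $0$ on $(t+h,T]$ and applying Proposition \ref{Xu} to $s\mapsto M_{s\wedge(t+h)}$ yields $\phi_r\equiv P_{M_r}\phi_r=P_p\phi_r$, so the columns of $\phi_r$ lie in $T_p(\Delta(I))$.

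Using the standard $L^2$-density of step processes with rational time-grid and Borel-measurable coefficients in predictable $L^2$-processes on the Wiener space, I build a sequence $\bar u^n\in\UR^s(t)$, supported in $[t,t+h]$, with columns in $T_p(\Delta(I))$ (by composing the approximation with $P_p$), such that $\E_t[\int_t^{t+h}|\bar u^n_r-\phi_r|^2\,dr]\to 0$. Define the auxiliary process $\tilde X^n_s:=p+\int_t^s\bar u^n_r\,dB^1_r$ (without projection). By It\^o's isometry and Doob's $L^2$-inequality,
\[
\E_t\Big[\sup_{s\in[t,t+h]}|\tilde X^n_s-M_s|^2\Big]\le 4\,\E_t\Big[\int_t^{t+h}|\bar u^n_r-\phi_r|^2\,dr\Big]\xrightarrow[n\to\infty]{}0,
\]
and setting $\tau_n:=\inf\{s\in[t,t+h]\mid \tilde X^n_s\notin K_\delta\}\wedge T$, the inclusion $M_s\in K$ forces $\P_t[\tau_n\le t+h]\to 0$.

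The decisive step is the comparison on $\{\tau_n>t+h\}$: there $\tilde X^n_s\in\mathrm{relint}(\Delta(I))$, so $P_{\tilde X^n_s}=P_p$, and since $P_p\bar u^n_r=\bar u^n_r$ by construction, $\tilde X^n$ solves the SDE (\ref{sdeX}) on $[t,t+h]$ with control $\bar u^n$; strong uniqueness in Proposition \ref{prop1} then gives $\tilde X^n_{t+h}=X^{t,p,\bar u^n}_{t+h}$ on this event. Combining,
\[
\E_t\big[|X^{t,p,\bar u^n}_{t+h}-Z|\big]\le \E_t\big[|\tilde X^n_{t+h}-M_{t+h}|\big]+2\,\P_t[\tau_n\le t+h],
\]
and both terms vanish as $n\to\infty$. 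Taking $\bar u:=\bar u^n$ for $n$ large enough yields (\ref{XZ}); the analogue for $q^1,q^2$ is identical using $B^2$. The main obstacle is bridging the $L^2$-Hilbert structure used to approximate $\phi$ with the pathwise requirement that the controlled trajectory remain in $\Delta(I)$, and this is precisely what the localization by $\tau_n$, together with the choice of $K_\delta$ strictly inside the relative interior, accomplishes.
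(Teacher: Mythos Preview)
Your proof is correct and follows essentially the same route as the paper: martingale representation for $Z$, $L^2$-approximation of the integrand by simple controls while preserving a tangency condition, and a localization argument showing that on an event of probability close to one the bare stochastic integral coincides with the projected SDE solution. The only cosmetic difference is that the paper forces the approximating integrands to have columns in the one-dimensional line $\R(p^2-p^1)$ (so the approximated integral stays on the affine line through $p^1,p^2$ and the localization set is a slightly enlarged segment), whereas you project onto the full tangent space $T_p(\Delta(I))$ and localize with a $\delta$-tube $K_\delta$ around $[p^1,p^2]$ inside $\mathrm{relint}(\Delta(I))$; both choices achieve the same goal of keeping $\tilde X^n$ in the relative interior so that $P_{\tilde X^n_s}=P_p$ and the projection is inactive.
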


\begin{proof} 
Since $\E_t[Z]=p$ and $Z$ is $\sigma(B^1_r, r\in[t,t+h])$-measurable, by the martingale representation theorem, there is some process $(a_s)_{s \in[t,t+h]}\in\UR(t)$ 
such that
\[ Z=p+\int_t^{t+h}a_r dB^1_r.\]
Since we have assumed that $Z$ belongs to $\Delta(I)$, Proposition \ref{Xu} applies:
\[ Z=X^{t,p,a}_{t+h}.\]
This process being a martingale such that $X^{t,p,a}_{t+h}\in\{ p_1,p_2\}$, it follows that, for all $s\in [t,t+h]$,  $X^{t,p,a}_s=\E_t[X^{t,p,a}_{t+h} | \FR^{T}_{t,s}]$ is almost surely a convex combination of $p_1$ and $p_2$, i.e. belongs to the line segment $[p_1,p_2]:=\{ \lambda p_1+(1-\lambda )p_2 \,|\, \lambda\in[0,1]\}$. Let $M=\R.(p_2-p_1)$, then for any vector $w\in M^{\perp}$, it holds that
\[ 0=\langle w,Z-p \rangle=\int_t^{t+h}  (w^t a_s) dB^1_s,\]
where $w^t$ denote the transpose of $w$. This implies that we have $ds\otimes \P_{t}$ almost surely $w^ta_s=0$. Considering a countable dense subset of vectors $w$ in $M^\perp$, we deduce that $a_s\in L$,  $ds\otimes \P_{t}$ almost surely, where $L=\{ A \in \R^{|I\times I|} \,|\, w^tA=0 , \; \forall w \in M^{\perp}\}$. Note that for all $A \in L$ and all $z\in \R^{|I|}$, we have $Az \in M$, since $\langle w,Az\rangle = w^tAz=0$ for all $w\in M^\perp$ and $M=(M^{\perp})^{\perp}$. 
 
\pa
Now it is well-known that there exists a sequence of simple processes $a^n \in \UR^s(t)$ 
 such that 
\begin{equation} \label{approx} 
\E_t\left[\sup_{s \in [t,t+h]} |\int_t^s \left(a_r-a^n_r\right) dB^1_r |^2 \right]\rightarrow 0,
\end{equation}
and we may choose this approximating sequence such that $a^n_s \in L$ $ds\otimes \P_{t}$ almost surely, since these approximations are constructed via averaging procedures (see e.g Lemma 2.4 p. 132 in \cite{KS}). It follows that for all $s\in [t,t+h]$, $\int_t^s a^n_s dB^1_s \in M$ almost surely.  
Let $\delta>0$ such that $p^1-\delta(p^2-p^1)$ and $p^2+\delta(p^2-p^1)$ belong to $\Delta(I)$.
Up to take a subsequence, we may assume that convergence in \eqref{approx} holds almost surely, and if we define 
\[ \tau_n=\inf \{ s \in [t,t+h] \,|\, p+\int_t^s  a^n_r dB^1_r \notin[p^1 - \delta(p^2-p^1), p^2+\delta (p^2-p^1)]\} \wedge (t+h),\]
then $\P_t[\t_n=t+h] \rightarrow 1$. Note that on the event $\{\tau_n=t+h\}$, we have 
\[ X^{t,p,a^n}_{t+h}=p+\int_t^{t+h}  a^n_r dB^1_r.\]
Therefore, if we define $\bar u=a^n$ for some $n$  such  that 
\[\P_t[\t_n<t+h]\leq \frac{\varepsilon}{4}, \quad \E_t\left[\sup_{s \in [t,t+h]} |\int_t^s \left(a_r-a^n_r\right) dB^1_r |^2\right]\leq \frac{\varepsilon^2}{4},\] 
we get
\[\begin{array}{rl}
\E_t\left[|X^{t,p,\bar  u}_{t+h}-Z|\ind_{\{\tau_n=t+h\}}\right]= &
\E_t\left[|\int_t^{t+h}(a^n_r-a_r)dB^1_r|\ind_{\{\tau_n=t+h\}}\right]\\
\leq &\E_t\left[\sup_{s\in[t,t+h]}|\int_t^s(a^n_r-a_r)dB^1_r|^2\right]^{1/2}\\
\leq &\frac\varepsilon 2.
\end{array}\]
and, since $|X^{t,p,\bar u}_{t+h}-Z|$ is bounded by 2,
\[ \E_t\left[|X^{t,p,\bar u}_{t+h}-Z|\ind_{\{\tau_n<t+h\}}\right]\leq \frac \varepsilon 2.\]
The result follows.
\end{proof}

\begin{Lemma}\label{lemma-orthogonal}
For any  process $a \in \UR(t)$ with $\E_t[\int_t^T |a_s|^2 ds]<\infty$, we have for all $t'\in[t,T]$,
\[ \E_t[ \int_t^{t'} a_s dB^1_s \,|\, (B^2_r)_{r \in [t,T]} ]=0.\]
\end{Lemma}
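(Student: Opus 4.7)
The plan is to reduce this to the Brownian martingale property by first proving the claim for bounded simple integrands and then extending by density, exploiting the independence of $B^1$ and $B^2$. The central fact is that this independence implies that $B^1$ remains a Brownian motion with respect to the enlarged filtration $(\sigma(B^1_r : r \in [t,s]) \vee \FR^2_\infty)_{s\in[t,T]}$, where $\FR^2_\infty := \sigma(B^2_r : r \in [t,T])$; equivalently, for $s \le t'$ the increment $B^1_{t'}-B^1_s$ is independent of $\sigma(B^1_r : r \in [t,s]) \vee \FR^2_\infty$ with law $N(0,(t'-s)\mathrm{Id})$.

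First I would treat the case of a bounded simple integrand $a_s = \sum_{j=0}^{N-1} A_j \ind_{(t_j,t_{j+1}]}(s)$ with each $A_j$ bounded and $\FR^T_{t,t_j}$-measurable. Writing
\[
\int_t^{t'} a_s dB^1_s = \sum_{j=0}^{N-1} A_j \bigl(B^1_{t_{j+1}\wedge t'} - B^1_{t_j\wedge t'}\bigr),
\]
I would apply the tower property with the intermediate $\sigma$-algebra $\GR_j := \sigma(B^1_r : r \in [t,t_j]) \vee \FR^2_\infty$: the coefficient $A_j$ is $\GR_j$-measurable (since $\FR^T_{t,t_j} \subset \GR_j$ modulo null sets), while the enlarged-filtration Brownian property gives $\E_t[B^1_{t_{j+1}\wedge t'} - B^1_{t_j\wedge t'} \mid \GR_j]=0$, so each summand has zero conditional expectation given $\FR^2_\infty$. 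For a general $a \in \UR(t)$ with $\E_t[\int_t^T |a_s|^2 ds]<\infty$, I would approximate $a$ by bounded simple processes $a^n$ with $\E_t[\int_t^T |a_s-a^n_s|^2 ds] \to 0$; It\^o's isometry then yields $L^2(\P_t)$-convergence of the stochastic integrals, and since conditional expectation is an $L^2$-contraction, the identity $\E_t[\int_t^{t'} a^n_s dB^1_s \mid \FR^2_\infty]=0$ passes to the limit.

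The only delicate point is the enlarged-filtration Brownian property, which is standard but worth a short justification. A monotone class argument based on the identity
\[
\E\bigl[f(B^1_{t'}-B^1_s)\ind_A\ind_C\bigr]=\E\bigl[f(B^1_{t'}-B^1_s)\ind_A\bigr]\,\P[C],
\]
valid for any $A \in \sigma(B^1_r : r \in [t,s])$, $C \in \FR^2_\infty$, and bounded measurable $f$ by the independence of $B^1$ and $B^2$, shows that the conditional distribution of $B^1_{t'}-B^1_s$ given $\sigma(B^1_r : r \in [t,s]) \vee \FR^2_\infty$ coincides with its unconditional Gaussian law, so this increment is independent of the enlarged $\sigma$-algebra. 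Everything else in the argument is routine.
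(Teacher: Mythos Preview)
Your proof is correct, but it follows a different route from the paper. The paper argues by duality: to show the conditional expectation vanishes, it fixes a bounded $\sigma(B^2_r:r\in[t,T])$-measurable test variable $Z$, writes $Z=z+\int_t^T H_r\,dB^2_r$ via the martingale representation theorem for the filtration of $B^2$, and then computes $\E_t[Z\int_t^{t'}a^i_r\,dB^1_r]$ using the It\^o product formula; the cross term is an integral against $d\langle B^{1,i'},B^{2,j'}\rangle$, which vanishes by independence of $B^1$ and $B^2$. Your argument instead uses initial enlargement: you observe that $B^1$ remains a Brownian motion in the filtration $(\sigma(B^1_r:r\le s)\vee\FR^2_\infty)_s$, deduce the claim for bounded simple integrands by the tower property, and pass to the limit in $L^2$ via It\^o's isometry and the contractivity of conditional expectation. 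The paper's approach is a single It\^o computation once one invokes martingale representation; yours avoids that theorem entirely and is arguably more elementary, at the cost of an approximation step. Your enlarged-filtration viewpoint also makes transparent the stronger statement that $\int_t^\cdot a_s\,dB^1_s$ is a martingale with respect to $(\FR^T_{t,s}\vee\FR^2_\infty)_s$, from which the conditional-expectation identity follows immediately.
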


\begin{proof}
We need to show that, for all $i\in I$ and all $\sigma \{ B^2_r,r \in [t,T]\}$-measurable, bounded random variable $Z$,
$\E_t[Z\int_t^{t'} a^i_r dB^1_r]=0$,
where $(a^i_r)_{r\in[t,T]}$ denotes here the $i$-th line of the matrix-valued process $(a_r)_{r\in[t,T]}$.
Applying the martingale representation theorem to $Z$, we obtain:
\[ Z=z+\int_t^TH_rdB^2_r,\]
with $z=\E_t[Z]$ and $(H_r)_{r\in[t,T]}$ an $\R^{|J|}$-valued square-integrable process adapted to the augmentation of the filtration generated by $(B^2_r)_{r\in [t,T]}$.
Since $B^1$ and $B^2$ are independent, It\^{o} formula implies:
\[ \E_t\left[Z\int_t^{t'} a^i_r dB^1_r\right]=z\E_t\left[\int_t^{t'} a^i_r dB^1_r\right]+ \sum_{(i',j')\in I\times J}\E_t\left[\int_t^{t'}H_r^{j'}a^{i,i'}_r d\langle B^{1,i'},B^{2,j'}\rangle_r\right]=0,\]
where $(\langle B^{1,i'},B^{2,j'}\rangle_r)_{r \in[t,T]}$ denotes the quadratic covariation between $B^{1,i'}$ and $B^{2,j'}$.

\end{proof}

\begin{Proposition}\label{propconcave}
The functions $V^+,V^-$ are convex in $p$ and concave in $q$.
\end{Proposition}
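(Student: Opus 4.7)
The four inequalities are proved in an order that avoids circular dependence. For $V^-$, convexity in $p$ comes first and concavity in $q$ is deduced from it; for $V^+$, by duality the order is reversed (concavity in $q$ first, convexity in $p$ after). The splitting technique of Lemma \ref{martrep} lets whichever player is doing the splitting push his initial position to a prescribed two-point distribution using his Brownian motion; the delicate point is to control how the opponent's martingale drifts over the short splitting window $[t,t+h]$.

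\textbf{Step 1 (convexity of $V^-$ in $p$).} Fix $p=\lambda_1 p^1+\lambda_2 p^2$ with $p^1,p^2$ in the relative interior of $\Delta(I)$ (the boundary case follows by density and the Lipschitz continuity of $V^-$), $\beta\in\BR(t)$ and $\varepsilon>0$. By Definition \ref{admissible}, $\beta$ has a first rational grid point $s_1^\beta>t$, and its action on $[t,s_1^\beta)$ is a constant $\beta^0\in V$ that depends only on $\beta$. Pick $h\in\Q\cap(0,s_1^\beta-t)$ with $\sqrt{h}\,|\beta^0|<\varepsilon$. Lemma \ref{martrep} gives a $\sigma(B^1)$-adapted $\bar u\in\UR^s(t)$ with $\E|X^{t,p,\bar u}_{t+h}-Z|\leq\varepsilon$, where $Z$ is $\sigma(B^1|_{[t,t+h]})$-measurable, takes values in $\{p^1,p^2\}$, and satisfies $\P(Z=p^i)=\lambda_i$. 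Player 1 plays $\bar u$ on $[t,t+h]$ and at $t+h$ switches to a continuation $u^i_{\omega_1}\in\UR^s(t+h)$ that is $\varepsilon$-infimum against the $\omega_1$-restriction of $\beta$ started from $(p^i,Y_{t+h}(\omega_1))$; as in the proof of Proposition \ref{propdpp}, a small-diameter Borel partition of the state space makes this selection jointly measurable and the glued object an admissible simple control. Because $v|_{[t,t+h]}=\beta^0$ is deterministic, $Y_{t+h}$ is $\sigma(B^2)$-measurable, independent of $Z$, and $\E|Y_{t+h}-q|\leq\sqrt{h}\,|\beta^0|<\varepsilon$. Combining the Lipschitz bounds on $J$ and on $V^-$ in each of its variables, and grouping by $\{Z=p^i\}$, yields
\[\inf_u J(t,p,q,u,\beta(u))\leq\sum_i\lambda_i V^-(t,p^i,q)+O(\varepsilon),\]
and taking $\sup_\beta$ then $\varepsilon\to 0$ proves the convexity.

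\textbf{Step 2 (concavity of $V^-$ in $q$).} The $B^2$-analog of Lemma \ref{martrep} yields a $\sigma(B^2)$-adapted $\bar v\in\VR^s(t)$ such that $Y^{t,q,\bar v}_{t+h}$ is $\varepsilon$-close to some $\sigma(B^2|_{[t,t+h]})$-measurable $Z\in\{q^1,q^2\}$ with $\P(Z=q^i)=\lambda_i$. Define $\beta$ to play $\bar v$ on $[t,t+h]$ \emph{independently of $u$}, and on $\{Z=q^i\}$ over $[t+h,T]$ to switch to a continuation strategy $\beta^i_{\omega_1}$ that is $\varepsilon$-supremum for $V^-(t+h,X_{t+h}(\omega_1),q^i)$, again obtained via a Borel partition of $\Delta(I)$. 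For any $u$, the sub-game payoff on $\{Z=q^i\}$ after $t+h$ is at least $V^-(t+h,X_{t+h},q^i)-O(\varepsilon)$ by the Lipschitz estimate in $q$ of $J$. The crucial step is then to lower bound
\[\sum_i\E[\ind_{Z=q^i}V^-(t+h,X_{t+h},q^i)].\]
Since $X_{t+h}-p=\int_t^{t+h}\sigma(X,u)\,dB^1$ is a bounded and hence $L^2$-martingale integral against $B^1$, Lemma \ref{lemma-orthogonal} gives $\E[X_{t+h}\mid\sigma(B^2)]=p$; $Z$ is $\sigma(B^2)$-measurable; and by the convexity of $V^-(t+h,\cdot,q^i)$ proved in Step 1 together with conditional Jensen, $\E[V^-(t+h,X_{t+h},q^i)\mid\sigma(B^2)]\geq V^-(t+h,p,q^i)$. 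Summing and using Lipschitz continuity in $t$ gives the lower bound $\sum_i\lambda_i V^-(t,p,q^i)-O(h)$; taking $\inf_u$ and $h,\varepsilon\to 0$ concludes. The arguments for $V^+$ are strictly symmetric: concavity in $q$ is proved first by the constant-first-action trick applied to a strategy $\alpha\in\AR(t)$, and convexity in $p$ follows from the $B^1$-analog of Step 2.

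\textbf{Main obstacle.} The idea is transparent but the measurability bookkeeping is delicate: one must glue $\bar u$ (or $\bar v$) with the continuation controls/strategies produced by the Borel-partition selection into objects that are truly admissible in the sense of Definition \ref{admissible}, verify that the fixed-point identity of Lemma \ref{fixedpoint} survives the gluing, and justify the use of Lemma \ref{lemma-orthogonal} for progressive (not a priori $L^2$) $u$ via localization using the boundedness of $X\in\Delta(I)$. These technicalities closely parallel those of the DPP proof (Proposition \ref{propdpp}), but must be redone in each of the four splitting constructions.
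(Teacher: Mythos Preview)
Your overall architecture matches the paper exactly: split via Lemma \ref{martrep} on $[t,t+h]$, prove convexity of $V^-$ in $p$ first, then deduce concavity in $q$ by conditional Jensen combined with Lemma \ref{lemma-orthogonal}. Step 2 of your sketch is essentially the paper's argument.

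There is, however, a genuine gap in Step 1. You write that Player 1 ``switches to a continuation $u^i_{\omega_1}$ that is $\varepsilon$-infimum against the $\omega_1$-restriction of $\beta$'' and that ``as in the proof of Proposition \ref{propdpp}, a small-diameter Borel partition of the state space makes this selection jointly measurable.'' This is not correct. In the DPP proof, Player 2 selects, for each cell $O_m$ of a partition of $\Delta(I)\times\Delta(J)$, an $\varepsilon$-optimal \emph{strategy} $\beta^m$ for the fresh game $V^-(t+h,p^m,q^m)$; this object is fixed once the cell is fixed and carries no further dependence on $\omega_1$. In your Step 1 the situation is structurally different: Player 1 must produce a \emph{control} that $\varepsilon$-best-responds to the continuation strategy $\beta_{\omega_1}$, and $\beta_{\omega_1}$ depends on the full Brownian path $\omega_1$ on $[t,t+h]$, not merely on the state $(X_{t+h},Y_{t+h})$. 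A partition of the finite-dimensional state space therefore cannot make the selection $\omega_1\mapsto u^i_{\omega_1}$ measurable.

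This is precisely why the paper isolates the technical Theorem \ref{selmeas} in the appendix: one has to show that $(\omega_1,u)\mapsto J(t+h,p^i,q,u,\beta_{\omega_1}(u))$ is jointly Borel on a product of Polish spaces (after restricting to controls with continuous coefficients on compacts, via Lemma \ref{Vc}), and then invoke Von Neumann's measurable selection theorem to extract an $\varepsilon$-optimal $u^{2,i}(\omega_1)$ that is universally, hence $\P_{t,t+h}$-a.s.\ Borel, measurable in $\omega_1$ and can be glued into a simple control. Your ``Main obstacle'' paragraph identifies the right \emph{kind} of difficulty (measurability bookkeeping) but underestimates it: the machinery needed here is substantially heavier than the partition argument of the DPP, and your proposal as written does not supply it.
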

\begin{proof}
We only prove that $V^-$ is convex in $p$ and concave in $q$, the proof for $V^+$ being similar.
\pa
1. $V^-$ is convex in $p$.
\pa
\noindent 1.1 Fix $p^1,p^2 \in \Delta(I)$ and $\lambda_1,\lambda_2\in(0,1)$ such that $\lambda_1+\lambda_2=1$, and set $p=\sum_i\lambda_ip^i$.
As $V^-$ is Lipschitz in $p$, it is sufficient to prove the inequality for $p^1,p^2$ in the relative interior of $\Delta(I)$. Let $0<\varepsilon<|p^1-p^2|$ and
consider an arbitrary strategy  $\beta\in\BR(t)$. Let $t=t_0<t_1<\ldots<t_m=T$  be the time grid associated to $\beta$. The control induced by $\beta$ on $[t,t_1)$ doesn't depend on the strategy of the opponent and is deterministic. Let us call it $(v^1_s)_{s\in[t,t_1)}$. \\
Fix $h\in (0,\varepsilon)$ such that $t+h\in (t,t_1)\cap\Q$, and
\[\E_t\left[|Y^{t,q,v^1}_{t+h}-q|\right]\leq \varepsilon.\]
\pa 1.2 Let the random variable $Z$ and $\bar u\in\UR^s(t)$ defined by Lemma \ref{martrep}. Set $u^1=\bar u|_{[t,t+h]}$.
With the help of $u^1$, we can define a continuation strategy for $\beta$: for all $\omega_1\in\Omega_{t,t+h}$, 
\[ (\omega_2,\u_2)\in \Omega_{t+h}\times U^s_{t+h} \rightarrow\beta_{\omega_1}(\omega_2,\u_2):=\beta(\omega_1\otimes \omega_2 , u^1(\omega_1)\otimes \u_2 )_{|[t+h,T]},\] 
where $u^1(\omega_1)\otimes \u_2(s)=u^1(\omega_1,s)$ if $s<t+h$ and $\u_2(s)$ else.
Remark that, for all $\omega_1\in\Omega_{t,t+h}$, $\beta_{\omega_1}$ is a well-defined strategy in $\BR(t+h)$.
\pa
Using the measurable selection argument developed in Theorem \ref{selmeas} of the appendix, we can find two
measurable process-valued maps $u^{2,1}$ and $u^{2,2}: \Omega_{t,t+h}\rightarrow \UR^s(t+h)$ such that with $\P_{t,t+h}$-probability larger than $1-\varepsilon$, the process $(u^{2,i}(\omega_1,\cdot))_s)_{s\in[t+h,T]}$ is $\varepsilon$-optimal for $\inf_{u\in\UR^s(t+h)}J(t+h,p^i,q,u,\beta_{\omega_1}(u))$ and has a  grid  which is independent of $\omega_1$.
\pa
This allows us to define a new control in $\UR^s(t)$:
\[
\tilde u_s(\omega)=\tilde u_s(\omega_1\otimes\omega_2)=\left\{
\begin{array}{ll}
u^1_s(\omega_1),&\mbox{ if }s\in[t,t+h),\\
u^2_s(\omega_1,\omega_2),& \mbox{ if }s\in[t+h,T],
\end{array}\right.\]
with
\[
u^2_s(\omega_1,\cdot)=\left\{ 
\begin{array}{rl}
u^{2,1}_s(\omega_1,\cdot),&\mbox{ if }s\in[t+h,T],  Z(\omega_1)=p^1,\\
u^{2,2}_s(\omega_1,\cdot),&\mbox{ if }s\in[t+h,T],  Z(\omega_1)=p^2.
\end{array}
\right.\]

\pa 1.3 By the same arguments as in the proof of Proposition \ref{propdpp}, we can write for all $(\omega_1,\omega_2)$:
\[  \forall s \in [t+h,T], \; X^{t,p,\tilde u}_s(\omega_1\otimes \omega_2)= X^{t+h,X(\omega_1),u^2(\omega_1)}_s(\omega_2), \; \text{with}\;  X(\omega_1):= X^{t,p,u^1}_{t+h}(\omega).\]
and
\[ \forall s \in [t+h,T], \; Y^{t,q,\beta(\tilde u)}_s(\omega_1\otimes \omega_2)= Y^{t+h,Y(\omega_1),\beta_{\omega_1}(u^2(\omega_1))}_s(\omega_2), \; \text{with}\;  Y(\omega_1):=Y^{t,q,v^1}_{t+h}(\omega).\]
We get
\begin{equation}
\label{AB}
\begin{array}{rl}
J(t,p,\tilde u,\beta(\tilde u))=&
\E_t\left[\int_t^{t+h}H(s,X^{t,p,\tilde u}_s,Y^{t,q,\beta(\tilde u)}_s)ds\right]\\
&+\int_{\Omega_{t,t+h}}\E_{t+h}\left[\int_{t+h}^TH(s,X^{t+h,X(\omega_1),u^2(\omega_1)}_s,Y_s^{t+h,Y(\omega_1),\beta_{\omega_1}(u^2(\omega_1))})ds\right]d\P_{t,t+h}(\omega_1)\\
:=& A+B.
\end{array}\end{equation}
Obviously $|A|\leq Ch\leq C\varepsilon$.\\
Concerning the second term of the right hand side, set
\[ B':= \int_{\Omega_{t,t+h}}\E_{t+h}\left[\int_{t+h}^TH\left(s,X^{t+h,Z(\omega_1),u^2(\omega_1)}_s,Y^{t+h,q,\beta_{\omega_1}(u^2(\omega_1))}_s\right)ds\right]\P_{t,t+h}(\omega_1).\]
Choosing $\varepsilon$ small enough, this expression can be arbitrarily close to $B$. Indeed:
\begin{equation}
\label{BB'} \begin{array}{rl}
|B-B'|\leq & C\int_{\Omega_{t,t+h}}\E_{t+h}\big[\int_{t+h}^T\big(|X^{t+h,Z(\omega_1),u^2(\omega_1)}-X^{t+h,X(\omega_1),u^2(\omega_1)}|\\
&\qquad +
|Y^{t+h,q,\beta_{\omega_1}(u^2(\omega_1))}-Y^{t+h,Y(\omega_1),\beta_{\omega_1}(u^2(\omega_1))}|\big)ds\big]d\P_{t,t+h}(\omega_1)\\
\leq& C\bar C T \int_{\Omega_{t,t+h}}\left(|Z(\omega_1)-X(\omega_1)|+|q-Y(\omega_1)|\right) d\P_{t,t+h}(\omega_1)\\
=&C\bar C T\E_{t,t+h}\left[ |Z-X^{t,p,\bar u}_{t+h}|+|q-Y^{t,q,v^1}_{t+h}|\right]\\
\leq & 2C\bar C T\varepsilon.
\end{array}
\end{equation}
Then we can estimate $B'$ : 
\[\begin{array}{rl}
B'
=&\sum_{i=1,2}\int_{\{Z(\omega_1)=p^i\}}\E_{t+h}\left[\int_{t+h}^TH(s,X^{t+h,p^i,u^{2}(\omega_1)}_s,Y^{t+h,q,\beta_{\omega_1}(u^2(\omega_1))}_s)ds\right]d\P_{t,t+h}(\omega_1)\\
=&\sum_{i=1,2}\int_{\{Z(\omega_1)=p^i\}}J(t+h,p^i,q,u^2,\beta_{\omega_1}(u^2))d\P_{t,t+h}(\omega_1)\\
= & \sum_{i=1,2}\int_{\{Z(\omega_1)=p^i\}}J(t+h,p^i,q,u^{2,i},\beta_{\omega_1}(u^{2,i}))d\P_{t,t+h}(\omega_1).
\end{array}\]
Because of the $\varepsilon$-optimality of $u^{2,i}$ with probability $(1-\varepsilon)$ and of the fact that $V^-$ is Lipschitz in $t$, it follows that
\begin{equation}
\label{B'}
\begin{array}{rl}
B'\leq & \sum_{i=1,2}\int_{\{Z(\omega_1)=p^i\}}\sup_{\beta'\in\BR(t+h)}\inf_{u\in\UR^s(t+h)}
J(t+h,p^i,q,u,\beta'(u)) dP_{t,t+h}(\omega_1)+(4CT+1)\varepsilon
\\
= & \sum_{i=1,2}\lambda_iV^-(t+h,p^i,q)+(4CT+1)\varepsilon\\
\leq & \sum_{i=1,2}\lambda_iV^-(t,p^i,q)+(4CT+8C+1)\varepsilon.
\end{array}
\end{equation}
Summing up (\ref{AB}), (\ref{BB'}) and (\ref{B'}), we get
\[ J(t,p,\tilde u,\beta(\tilde u))\leq \lambda_1V^-(t,p^1,q)+\lambda_2V^-(t,p^2,q)+K\varepsilon,\]
where the constant $K$ depends only on the parameters of the game. The result follows by a standard argument.

\pa
2. $V^-$ is concave with respect to $q$. 
 \pa
We fix again $\lambda_1,\lambda_2\in (0,1)$ such that $\lambda_1+\lambda_2=1$ and choose now
 $q^1,q^2,q\in\Delta(J)$ with $q=\lambda_1 q^1+\lambda_2q^2$. 
As in step 1, we can take $q^1,q^2$ in the relative interior of $\Delta(J)$.
Fix $h>0$ such that $t+h\in(t,T]\cap\Q$. 
Using Lemma \ref{martrep}, we can find $\bar v\in\VR^s(t)$ and $Z$ a $\sigma(B^2_r, r\in[t,t+h])$-measurable, $\Delta(J)$-valued random variable such that $\P_t[Z=q^i]=\lambda_i, i=1,2$ and $\E_t[|Y^{t,q,\bar v}_{t+h}-Z|]\leq h$.
\pa
Let us fix a measurable partition $(O_r)_{r\in\{ 1,\ldots,R\}}$ of $\Delta(I)$ of mesh $h$ and $(p^r)_{r\in\{ 1,\ldots,R\}}\subset\Delta(I)$ with $p^r\in O_r$ for all $r\in\{ 1,\ldots,R\}$.
For $r\in\{ 1,\ldots,R\}$ and $i\in\{ 1,2\}$,  let $\beta^{r,i}\in\BR(t+h)$ be $h$-optimal for $V^-(t+h,p^r,q^i)$ and $A^u_{r,i}=\{ X^{t,p,u}_{t+h}\in O_r, Z=q^i\}$. Remark that the sets $A^u_{r,i}, i\in\{ 1,2\}, r\in\{1,\ldots,R\}$ form a partition of $\Omega_{t,t+h}$. Then, as in the proof of Proposition \ref{propdpp}, we  can prove that the following map from $\Omega_t\times U^s_t$ to $V^s_t$ defines an admissible strategy for Player 2: 
\[ \beta(\omega,u)(s)=
\left\{ 
\begin{array}{ll}
\bar v_s,& \mbox{ if }s\in[t,t+h),\\
\beta^{r,i}(\omega_2,u|_{[t+h,T]})(s),&\mbox{ if } (s,\omega_1)\in [t+h,T]\times A^u_{r,i}
\end{array}
\right. .\]

Now, for all $u\in\UR^s(t)$, by the definition of $\bar v$ and the Lipschitz continuity of $H$,
\[\begin{array}{rl}
J(t,p,q,u,\beta)& \geq \\
\sum_{r,i}\int_{A^u_{r,i}} &\E_{t+h}\left[
\int_{t+h}^TH(s,X^{t+h,p^r,u|_{[t+h,T]}(\omega_1)}_s,Y^{t+h,q^i,\beta^{r,i}(u|_{[t+h,T]}(\omega_1))})ds\right]d\P_{t,t+h}(\omega_1)-C(2T+1)h\\
\geq &\sum_{r,i}\E_t\left[V^-(t+h,p^r,q^i)\ind_{A^u_{r,i}}\right]-(C(2T+1)+1)h\\
&\mbox{because of the $h$-optimality of $\beta^{r,i}$,}\\
\geq &\sum_i\E_t\left[V^-(t,X^{t,p,u}_{t+h},q^i) \ind_{Z=q^i}\right]-\tilde C h
\end{array}\]
where in the last inequality we have used the Lipschitz continuity of $V^-$ in $(t,p)$, with $\tilde C=C(9+2T)+C\bar CT+1$.
To conclude, note that by Lemma \ref{lemma-orthogonal}, we have 
\[ \E_t[X^{t,p,u}_{t+h} | (B^2_r)_{r \in [t,T]}]=p .\]
Since $Z$ is $\sigma(B^2_r, r\in[t,t+h])$-measurable, and using Jensen's inequality for conditional expectations (which we may apply as we already proved that $V^-$ is convex in $p$), we obtain that for $i=1,2$:
\[ \E_t\left[V^-(t,X^{t,p,u}_{t+h},q^i) \ind_{Z=q^i}| (B^2_r)_{r \in [t,T]} \right]= \ind_{Z=q^i}\E_t\left[V^-(t,X^{t,p,u}_{t+h},q^i) | (B^2_r)_{r \in [t,T]} \right]\geq  \ind_{Z=q^i} V^-(t,p,q^i).\]
We deduce that
\[ J(t,p,q,u,\beta) \geq \sum_i \lambda_i V^{-}(t,p,q^i) -\tilde C h,\]
and since this relation holds true for for all $u\in\UR^s(t)$, it implies that 
\[ V^-(t,p,q) \geq \sum_i \lambda_i V^{-}(t,p,q^i) -\tilde C h.\]
The result follows since $h$ can be chosen arbitrarily small.
\end{proof}

\section{Viscosity solution} \label{sectionviscosity}

\pa Now we have all the ingredients to establish the main result. First, let us recall the definition of sub- and supersolutions for (\ref{hji}) given in \cite{c2}.

\begin{Definition}
1.
A function $w:[0,T]\times\Delta(I)\times\Delta(J)\mapsto\R$ is called a supersolution of (\ref{hji}) if it is l.s.c. and satisfies, for any smooth test function $\varphi:[0,T]\times\Delta(I)\times\Delta(J)\mapsto\R$ and $(t,p,q)\in\Delta(I)\times\mbox{Int}(\Delta(J))$: if $\varphi-w$ has a local maximum at $(t,p,q)$ then
\[ \max\left\{\min\left\{ \frac{\partial \varphi}{\partial t}(t,p,q)+H(t,p,q);\lambda_{min}(p,D^2_p\varphi(t,p,q))\right\};\lambda_{max}(q,D^2_q\varphi(t,p,q))\right\}\leq 0 .\]
\pa 2.
The function $w:[0,T]\times\Delta(I)\times\Delta(J)\mapsto\R$ is called a subsolution of (\ref{hji}) if it is u.s.c. and satisfies, for any smooth test function $\varphi:[0,T]\times\Delta(I)\times\Delta(J)\mapsto\R$ and $(t,p,q)\in\mbox{Int}(\Delta(I))\times\Delta(J)$: if $\varphi-w$ has a local minimum at $(t,p,q)$ then
\[ \min\left\{\max \left\{  \frac{\partial \varphi}{\partial t}(t,p,q)+H(t,p,q);\lambda_{max}(q,D^2_q\varphi(t,p,q))\right\};\lambda_{min}(p,D^2_p\varphi(t,p,q))\right\}\geq 0 ;\]
\pa 3. $w$ is called a solution of (\ref{hji}), if it is both a super- and a subsolution.
\end{Definition}

\begin{Theorem}
The game has a value $V=V^-=V^+$, which is the unique Lipschitz viscosity solution of (\ref{hji}).
\end{Theorem}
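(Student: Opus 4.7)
The plan is to prove that $V^-$ is a viscosity supersolution and $V^+$ is a viscosity subsolution of the barrier equation (\ref{hji}); since both functions are Lipschitz continuous and satisfy the trivial terminal condition $V^\pm(T,\cdot,\cdot)\equiv 0$, the comparison principle established by Cardaliaguet in \cite{c2} then yields $V^+\leq V^-$. Combined with the obvious inequality $V^-\leq V^+$, this gives $V:=V^-=V^+$, and the uniqueness statement of \cite{c2} concludes.

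For the supersolution property of $V^-$, fix $(t_0,p_0,q_0)\in [0,T)\times\Delta(I)\times\mathrm{Int}(\Delta(J))$ and a smooth test function $\varphi$ such that $\varphi-V^-$ attains a local maximum at $(t_0,p_0,q_0)$. Proposition \ref{propconcave} asserts that $V^-$ is concave in $q$; since $q_0$ lies in the relative interior of $\Delta(J)$, a direct second-order expansion along directions $z\in T_{q_0}(\Delta(J))$ forces $\lambda_{\max}(q_0,D^2_q\varphi(t_0,p_0,q_0))\leq 0$. It remains to show that $\min\{\partial_t\varphi+H;\lambda_{\min}(p_0,D^2_p\varphi)\}(t_0,p_0,q_0)\leq 0$, which is trivial when $\lambda_{\min}\leq 0$. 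Assume therefore $\lambda_{\min}(p_0,D^2_p\varphi(t_0,p_0,q_0))>0$. I would apply the DPP of Proposition \ref{propdpp} with Player 2 using the admissible constant strategy $\beta\equiv v$ for some $v\in V$ (after replacing $\varphi$ by a quartic perturbation to make the local maximum strict, and introducing a stopping time $\tau$ at the exit of a small neighborhood to tame the unbounded controls). Applying Itô's formula to $\varphi(s,X^u_s,Y^v_s)$ on $[t,(t+h)\wedge\tau]$, dividing by $h$ and letting $h\to 0$, one obtains
\[
0\geq \inf_{u\in U}\Bigl[\partial_t\varphi+H+\tfrac12\mathrm{Tr}\bigl(\sigma(p_0,u)\sigma(p_0,u)^t D^2_p\varphi\bigr)+\tfrac12\mathrm{Tr}\bigl(\tau(q_0,v)\tau(q_0,v)^t D^2_q\varphi\bigr)\Bigr](t_0,p_0,q_0).
\]
Because $P_{p_0}D^2_p\varphi P_{p_0}$ is positive definite on $T_{p_0}(\Delta(I))$ by assumption, the infimum over $u$ is finite and realized at $u=0$; choosing then $v=0$ eliminates the last trace and yields $\partial_t\varphi(t_0,p_0,q_0)+H(t_0,p_0,q_0)\leq 0$, as required.

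The subsolution property of $V^+$ follows by the symmetric argument: at a local minimum of $\varphi-V^+$ with $p_0\in\mathrm{Int}(\Delta(I))$, convexity of $V^+$ in $p$ (Proposition \ref{propconcave}) gives $\lambda_{\min}(p_0,D^2_p\varphi)\geq 0$, and the dual DPP (\ref{dpp2}) with Player 1 playing a constant strategy, combined with the analogous Itô/localization procedure, provides $\partial_t\varphi+H\geq 0$ whenever $\lambda_{\max}(q_0,D^2_q\varphi)<0$.

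The main obstacle is making the above $h\to 0$ passage rigorous in the presence of unbounded control spaces: a brutal Itô expansion diverges as soon as Player 1 (in the supersolution step) attempts to exploit a direction of negative curvature of $D^2_p\varphi$. The quartic penalization $\varphi(t,p,q)+\eta(|p-p_0|^4+|q-q_0|^4)$, ensuring strictness of the extremum, together with a stopping-time localization of the controlled SDEs (\ref{sdeX})--(\ref{sdeY}) in a neighborhood where $\varphi\geq V^-$ (resp. $\varphi\leq V^+$), allows one to bound the Itô remainder uniformly in $u$ and justifies interchanging the infimum and the expansion. Once this technical point is resolved, the remainder of the proof reduces to invoking the comparison principle from \cite{c2} and the trivial inequality $V^-\leq V^+$.
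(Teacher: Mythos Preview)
Your overall architecture is correct and coincides with the paper's: prove $V^-$ is a supersolution, $V^+$ a subsolution, and invoke the comparison principle of \cite{c2}. The reduction via concavity/convexity to the case $\lambda_{\min}(p_0,D^2_p\varphi)>0$ (resp.\ $\lambda_{\max}(q_0,D^2_q\varphi)<0$) is also the right move, as is specializing the DPP to the constant strategy $\beta\equiv 0$ so that $Y\equiv q_0$.

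The gap lies in the step where you derive $\partial_t\varphi+H\leq 0$. Your proposed mechanism --- It\^o's formula, quartic penalisation, stopping-time localisation --- does not close in this setting. First, Proposition \ref{propdpp} is only available at deterministic rational times $t+h$, not at stopping times; the simple-control framework of the paper gives no DPP at $\tau$. Second, the penalisation runs into a sign trap: to make the local maximum of $\varphi-V^-$ strict you must \emph{subtract} $\eta|p-p_0|^4$, but this injects negative curvature into $D^2_p\tilde\varphi$ at points $p\neq p_0$, so that $\inf_u \tfrac12\mathrm{Tr}(\sigma(p,u)\sigma(p,u)^tD^2_p\tilde\varphi)=-\infty$ there and the It\^o expansion is useless; if instead you add the quartic (as written), you destroy the local maximum. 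Third, even granting a global inequality $\tilde\varphi\leq V^-$, you still need $\E[|X^{\bar u}_{t+h}-p_0|^2]=O(h)$ for the near-optimal $\bar u$ in order to pass from the integral inequality to the pointwise one, and your scheme produces no such estimate.

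The paper avoids It\^o altogether. It exploits the convexity of $V^-$ in $p$ (Proposition \ref{propconcave}) in a sharper way than merely reading off $\lambda_{\min}\geq 0$: together with $\lambda_{\min}(p_0,D^2_p\varphi)>0$ it yields a \emph{global} affine-plus-quadratic lower bound (this is Claim~3.3 in \cite{c2})
\[
V^-(s,p',q_0)\;\geq\;\varphi(s,p_0,q_0)+D_p\varphi(s,p_0,q_0)\cdot(p'-p_0)+\delta\,|p'-p_0|^2
\]
valid for \emph{all} $p'\in\Delta(I)$ and all $s$ close to $t_0$. Plugging this into the DPP (with $\beta\equiv 0$) and taking an $h^2$-optimal control $\bar u$ gives, after cancelling the martingale term $\E[D_p\varphi\cdot(X^{\bar u}_{t+h}-p_0)]=0$,
\[
\E\Big[\int_{t_0}^{t_0+h}H(s,X^{\bar u}_s,q_0)\,ds+\varphi(t_0+h,p_0,q_0)-\varphi(t_0,p_0,q_0)+\delta\,|X^{\bar u}_{t_0+h}-p_0|^2\Big]\;\leq\;h^2.
\]
This single inequality simultaneously produces the a priori bound $\E[|X^{\bar u}_{t_0+h}-p_0|^2]\leq C'h$ (hence $\E[|X^{\bar u}_s-p_0|]\leq\sqrt{C'h}$ by Jensen) and, after dividing by $h$ and using Lipschitz continuity of $H$, the desired inequality $\partial_t\varphi(t_0,p_0,q_0)+H(t_0,p_0,q_0)\leq 0$. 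No stopping time and no second-order expansion of $\varphi$ along $X$ are needed.
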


\begin{proof}
We know already that $V^-,V^+$ are convex in $p$, concave in $q$ and Lipschitz in all their variables. Moreover, the uniqueness of the solution of (\ref{hji}) in this class of functions has already been established in \cite{cr1}. 

\pa Let us prove that $V^-$ is a supersolution:\\
Let $\varphi$ be a map from $[0,T]\times\Delta(I)\times\Delta(J)$ to $\R$ which is  $C^2$ in all its variables and such that, for some fixed $(t,p,q)\in [0,T]\times\iNt(\Delta(I))\times\Delta(J)$, $\varphi-V^-$ has a local maximum at $(t,p,q)$. Without loss of generality we can suppose that $\varphi(t,p,q)=V^-(t,p,q)$. Since $V^-$ is concave in $q$, we have $\lambda_{max}(q,D^2_q\varphi(t,p,q))\leq 0$. Suppose that $\lambda_{min}(p,D^2_p\varphi(t,p,q))\leq 0$. Then the relation
\[ \max\left\{\min\left\{ \frac{\partial \varphi}{\partial t}(t,p,q)+H(t,p,q);\lambda_{min}(p,D^2_p\varphi(t,p,q))\right\};\lambda_{max}(q,D^2_q\varphi(t,p,q))\right\}\leq 0 ;\]
is satisfied. Suppose now that $\lambda_{min}(p,D^2_p\varphi(t,p,q))> 0$. In this case we have to prove that $\frac{\partial \varphi}{\partial t}(t,p,q)+H(t,p,q)\leq 0$. But, as in \cite{c2} (see claim 3.3 in the proof of Theorem 3.3), we can find $\bar{h}\in(0,T-t)$ and $\delta>0$ such that, for all $p'\in\Delta(I)$ and $s\in[t,t+\bar{h}]$,
\begin{equation}
\label{conv}
 V^-(s,p',q)\geq \varphi(s,p,q)+D_p\varphi(s,p,q)\cdot(p'-p)+\delta|p'-p|^2.
\end{equation}
By the dynamic programming principle \ref{propdpp}, we have for $h>0$, with $t+h\in[t,t+\bar{h}] \cap \Q$
\[\begin{array}{rl}
0\geq &\sup_{\beta\in\BR(t)}\inf_{u\in\UR^s(t)} \E_t\left[ \int_t^{t+h}H(s,X^{t,p,u}_s,Y^{t,q,\beta(u)}_s)ds+V^-(t+h,X^{t,p,u}_{t+h},Y^{t,q,\beta(u)}_{t+h})-V^-(t,p,q)\right]\\
\geq &\inf_{u\in \UR^s(t)}\E_t\left[ \int_t^{t+h}H(s,X^{t,p,u}_s,q)ds+V^-(t+h,X^{t,p,u}_{t+h},q)-V^-(t,p,q)\right],
\end{array}\]
where the second inequality is obtained by choosing the particular strategy $\beta \equiv 0$. 
\pa
Now we take $\bar u \in \UR^s(t)$ which is $h^2$-optimal for this last right hand side term, and remark that $\E_t[D_p\varphi(t+h,p,q)\cdot(X^{t,p,\bar u}_{t+h}-p)]=0$. Then, from relation (\ref{conv}),
\[
 \E_t\left[ \int_t^{t+h}H(s,X^{t,p,\bar u}_s,q)ds+\varphi(t+h,p,q)-\varphi(t,p,q)+\delta|X^{t,p,\bar u}_{t+h}-p|^2\right]\leq h^2.
\]
This implies that 
\begin{equation}
\label{nino}
\E_t\left[ \int_t^{t+h}(H(s,X^{t,p,\bar u}_s,q)+\frac{\partial \varphi}{\partial t}(s,p,q))ds\right]\leq h^2.
\end{equation}
and also (since $H$ and $\frac{\partial \varphi}{\partial t}$ are bounded) that there exists some constant $C'$ such that
$\E_t\left[|X^{t,p,\bar u}_{t+h}-p|^2\right]\leq C' h$. Therefore, by Jensen's inequality, we have for all $s\in[t,t+h]$,
\[ \E_t\left[|X^{t,p,\bar u}_s-p|^2\right]\leq  C'h.\]
Using this relation in (\ref{nino}), we get
\[ \begin{array}{rl}
\int_t^{t+h}(H(s,p,q)+\frac{\partial \varphi}{\partial t}(s,p,q))ds\leq &\E_t\left[ \int_t^{t+h}(H(s,X^{t,p,\bar u}_s,q)+\frac{\partial \varphi}{\partial t}(s,p,q))ds\right]\\
&\qquad\qquad\qquad+
C\int_t^{t+h}(\E_t[|X^{t,p,\bar u}_s-p|^2])^{1/2}ds\\
\leq &h^2 +C\sqrt{C'}h^{3/2}.
\end{array}\]
The result follows.\\
\pa The remaining of the proof is standard: 
By symmetric arguments, $V^+$ is a subsolution of (\ref{hji}). From the comparison theorem of \cite{cr1} we deduce that $V^+\leq V^-$. But we know already that $V^-\leq V^+$. It follows that the two value functions are equal and both viscosity sub- and supersolution of \ref{hji}.
\end{proof}

\section{Appendix}

Let us start with easy remarks on the set of  simple trajectories  and strategies.
\pa
Recall that $U_{t,t'}$ denote the set of equivalence classes (with respect to the Lebesgue measure) of Borel measurable maps from $[t,t']$ to $U$.  Let $\Pi_{t,t'}$ denote the countable set of all finite partitions $\pi=\{t=t_0<t_1<...<t_m=t'\}$ with $t_1,...,t_{m-1}$ in $(t,t')\cap \Q$. Let $|\pi|=m$ denote the size of the partition (number of intervals). Given $\pi \in \Pi_{t,t'}$, let  $U^\pi_{t,t'}$ denote the subset of maps that are piecewise constant on $\pi$. 
Remark that $U^\pi_{t,t'}$ is a Borel subset of $U_{t,t'}$. Indeed, one may define $U^\pi_{t,t'}$ by a countable number of measurable constraints as follows:\\
\begin{multline} \forall j  \in \{0,...,m-1\},  \forall s,s',c,c' \in [t_j,t_{j+1}] \cap \Q \; \text{with}\; s<s' \; \text{and} \; c<c', \;\\
 \frac{1}{s'-s}\int_{s}^{s'} \u(r)dr=\frac{1}{c'-c}\int_{c}^{c'} \u(r)dr.
 \end{multline}
The set of simple trajectories $U^s_{t,t'}=\cup_{\pi \in \Pi_{t,t'}} U^\pi_{t,t'}$ is a Borel subset of $U_{t,t'}$ as a countable union of Borel subsets. The measurability of a map defined on $U^s_{t,t'}$ is therefore equivalent to the measurability of its restriction to $U^\pi_{t,t'}$ for each $\pi$.
Note finally that $U^\pi_t$ is homeomorphic to $U^{m}$ by identifying $\u$ with $\left(\frac{1}{t_{j+1}-t_j} \int_{t_j}^{t_{j+1}} \u(s) ds\right)_{j=0,...,m-1}$. Therefore, in order to prove that some map $g: \Omega_{t,t'}\times U^s_{t,t'}  \rightarrow V$ is Borel-measurable, it is sufficient to prove that for each $\pi$, there exists a measurable map $g^\pi: \Omega_{t,t'} \times U^{m}  \rightarrow V$ which coincides with the restriction of $g$ to $U^\pi_{t,t'}$ in the sense that
\[ \forall \omega \in \Omega_t, \forall \u \in U^\pi_{t,t'}, \; g(\omega,\u)= g^{\pi}\left(\omega,\left(\frac{1}{t_{j+1}-t_j} \int_{t_j}^{t_{j+1}} \u(s) ds\right)_{j=0,...,m-1} \right).\]
We will use this remark in the following Lemma.

\begin{Lemma}\label{SDE-borel}
Given $t,t' \in [0,T]$ with $t<t'$, 
there exist Borel maps: 
\[(p,\omega,\u) \in \Delta(I)  \times \Omega_{t,t'} \times U^s_{t,t'} \rightarrow  \Phi^X_{t,t'} (p,\omega,\u) \in C([t,t'],\R^{|I|}),\]
\[(q,\omega,\v) \in \Delta(J)  \times \Omega_{t,t'} \times V^s_{t,t'} \rightarrow  \Phi^Y_{t,t'} (q,\omega,\v) \in C([t,t'],\R^{|J|}),\]
such that given any $u\in \UR^s(t)$ and $\beta \in \BR(t)$, 
then with $\P_t$-probability $1$, we have
\[ (X^{t,p,u}(\omega),Y^{t,q,\beta(u)}(\omega))=(\Phi^X_{t,T}(p,\omega,u(\omega)),\Phi^Y_{t,T}(q,\omega,\beta(u)(\omega)).\]
Moreover, if $h \in(0,T-t]$, then for all $s \in [t,T]$,
\[ \Phi^X_{t,T}(p,\omega,\u)(s)= \left\{ \begin{matrix} \Phi^X_{t,t+h}(p,\omega|_{[t,t+h]},\u|_{[t,t+h]}) & \text{if $s\leq t+h$,} \\
 \Phi^X_{t+h,T}(\Phi^X_{t,T}(p,\omega,\u)(t+h),\omega|_{[t+h,T]} - \omega(t+h),\u|_{[t+h,T]})(s) & \text{ if $s> t+h$}.
\end{matrix} \right. \] 
It follows that, using the notations of the proof of Proposition \ref{propdpp}, we have almost surely
\[  \forall s \in [t+h,T], \; X^{t,p,u}_s(\omega_1\otimes \omega_2)= X^{t+h,X(\omega_1),u(\omega_1)}_s(\omega_2),\]
where $X(\omega_1):=\Phi^X_{t,t+h}(p,\omega_1,u(\omega_1))$ is $\P_t$-almost surely equal to $X^{t,p,u}_{t+h}(\omega)$ .
\pa 
Similar results hold for $\Phi^Y$.
\end{Lemma}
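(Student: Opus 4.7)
The plan is to use the simple (piecewise-constant, rational-grid) structure of the controls to construct $\Phi^X_{t,t'}$ completely pathwise, mimicking the inductive construction in Proposition \ref{prop1}, and then derive measurability and the flow property from the elementary nature of that construction. I first fix a partition $\pi=\{t=t_0<\cdots<t_m=t'\}\in\Pi_{t,t'}$ and build $\Phi^X_{t,t'}$ on the Borel subset $\Delta(I)\times \Omega_{t,t'}\times U^\pi_{t,t'}$ (identifying $U^\pi_{t,t'}$ with $U^m$ via the averaging coordinates as in the remark preceding the lemma); then $\Phi^X_{t,t'}$ is extended to $U^s_{t,t'}=\bigcup_\pi U^\pi_{t,t'}$ by verifying consistency between partitions, which is automatic since a refinement still produces the same strong solution.

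Given $(p,\omega,(c_0,\ldots,c_{m-1}))$, I run the induction of Proposition \ref{prop1} explicitly. On each interval $[t_j,t_{j+1})$ where the control is constant equal to $c_j$, the process evolves as $X_s=X_{s_0}+P_{I'}c_j\bigl(\omega^1(s)-\omega^1(s_0)\bigr)$ where $I'=S(X_{s_0})$ and $\omega^1$ is the first half of the Brownian path; the stochastic integral is here a bona fide pathwise linear function. Within $[t_j,t_{j+1})$, I stop this affine-in-$\omega$ trajectory at the first time it leaves $\Delta(I')$ (a hitting time of a closed set by a continuous function, hence Borel in $(p,\omega,(c_0,\ldots,c_{m-1}))$), update the support, restart, etc. Since the support can only strictly decrease, the procedure terminates in at most $|I|$ steps, and the final assembled trajectory is a Borel function of $(p,\omega,(c_0,\ldots,c_{m-1}))$ as a finite composition of Borel operations (evaluation of $\omega$ at Borel-measurable times, projections $P_{I'}$ chosen from the finite family indexed by subsets of $I$, and measurable selection of the active subset via indicators). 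The same construction works for $\Phi^Y$ using $\omega^2$.

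Given $u\in\UR^s(t)$, the random trajectory $\omega\mapsto\Phi^X_{t,T}(p,\omega,u(\omega))$ is $\FR^T_{t,\cdot}$-adapted and, by construction, satisfies SDE (\ref{sdeX}) pathwise on a full-probability set. By the uniqueness statement of Proposition \ref{prop1}, it coincides $\P_t$-a.s. with $X^{t,p,u}$; the same argument applied to $\beta(u)\in\VR^s(t)$ (given by Lemma \ref{measurabilityv2}) yields the assertion for $Y^{t,q,\beta(u)}$. The flow property is a direct consequence of the pathwise construction: the tail of the trajectory after $t+h$ is produced by exactly the same inductive rule, starting from $\Phi^X_{t,T}(p,\omega,\mathbf{u})(t+h)$, driven by the shifted Brownian path $\omega|_{[t+h,T]}-\omega(t+h)$ and the truncated control $\mathbf{u}|_{[t+h,T]}$; one may always insert $t+h$ into the partition to reduce to a partition endpoint, and uniqueness of the induction then gives the identity. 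The restriction-to-$\omega_1$ corollary used in Proposition \ref{propdpp} follows by applying the flow identity at time $t+h$ and observing that $X(\omega_1)=\Phi^X_{t,t+h}(p,\omega_1,u(\omega_1))$ is a Borel function of $\omega_1$ alone.

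The main obstacle is the measurability step in the inductive construction: the projection $P_x$ is discontinuous at the relative boundary of $\Delta(I)$, so one must argue that the active support at each stage is determined by a Borel function of the previous data, and that the corresponding hitting time is Borel. This is handled by observing that the support only takes finitely many values, that the continuous trajectory produced on each segment is jointly Borel in its starting data, and that hitting times of closed sets by continuous processes are Borel stopping times of the canonical filtration. Once this measurable bookkeeping is in place, everything else, including the almost-sure identification with the strong solution and the flow property, reduces to invoking the uniqueness result of Proposition \ref{prop1}.
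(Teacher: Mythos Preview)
Your proposal is correct and follows essentially the same approach as the paper: both fix a partition $\pi$, mimic the inductive construction of Proposition~\ref{prop1} using the explicit pathwise stochastic integral for piecewise-constant integrands, argue Borel measurability via composition (hitting times of closed sets by continuous trajectories, finitely many possible supports), and deduce the flow property from the deterministic nature of the construction. The only cosmetic difference is organizational: you iterate interval-by-interval and handle support drops inside each $[t_j,t_{j+1})$, whereas the paper defines global processes $X^1,\ldots,X^{|I|}$ indexed by the number of support reductions; both yield the same object and the same measurability argument.
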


\begin{proof}
The proof follows from the fact that for simple trajectories of the controls, the stochastic integrals appearing in the proof of Proposition \ref{prop1} is explicit and pathwise. We only prove the Lemma for $\Phi^X$ as the construction is similar for $\Phi^Y$.  
\pa
Let $(p,\omega,\u) \in \Delta(I)  \times \Omega_{t,t'} \times U^s_{t,t'}$, and define $X^1=X^1(p,\omega,\u)\in C([t,t'],\R^{|I|})$ by: 
\[ (p,\omega,\u) \rightarrow X^1=p+\int_t^\cdot T_p \u(s)d B^1_s \in C([t,t'],\R^{|I|}),\]
using the standard pathwise definition for the stochastic integral of simple processes.  
\pa
Then, for a generic trajectory $x$ in $C([t,t'],\R^{|I|})$ and for $k \in \{1,...,|I|-1\}$, let 
\[  \tau^k(x)= \inf \{ s \in [t,t'] \,|\, | \{ i \in I | x^i(s) \neq 0 \}| \leq |I|-k\}.\]
Note that $\tau^k$ being the hitting time of a closed set, it is a stopping time of the raw filtration of $C([t,t'],\R^{|I|})$, and thus a Borel map from $C([t,t'],\R^{|I|})$ to $[t,T] \cup\{+\infty\}$. 
\pa
Then, define by induction for $k=1,...,|I|-1$, $X^{k+1}(p,\omega,\u)$ by 
\[ X^{k+1}_s= X^k_{s\wedge\tau^k(X^k)}  +\int_{\tau^k(X^k)}^{s\vee \tau^k(X^k)} P_{X^k_{\tau^k(X^k)}}\u(r) dB^1_r,\;  s \in[t,t'].\]
We set $\Phi^X_{t,t'}(p,\omega,\u):= X^{|I|}$ for $\u \in U^s_{t,t'}$. 
In order to prove that $\Phi^X_{t,t'}$ is Borel, it is sufficient to prove that for each partition $\pi=\{ t=t_0<t_1<...<t_m=t'\}$ in $\Pi_{t,t'}$, the restriction of $\Phi^X_{t,t'}$ to $\Delta(I)  \times \Omega_{t,t'} \times U^\pi_{t,t'}$ is Borel.
Let $\pi \in\Pi_{t,t'}$. In the following, given some vector $(\u_j)_{j=0,...,m-1} \in U^m$, let $\u$ denote the trajectory $\u(s)= \sum_{j=0}^{m-1} \u_j \ind_{s \in [t_j,t_{j+1})} \in U^\pi_{t,t'}$. We have  to show that
\[  \begin{matrix} \Delta(I)\times\Omega_{t,t'}\times U^m & \rightarrow & C([t,t'],\R^{|I|}) \\ (p,\omega,(\u_j)_{j=0,...,m-1})& \mapsto& X^{|I|} \end{matrix},\]
is Borel. Note at first that the map
\[ (p,\omega,(\u_j)_{j=0,...,m-1}) \rightarrow X^1=p+\int_t^\cdot T_p \u(s)d B^1_s(\omega) \in C([t,t'],\R^{|I|}),\]
is Borel, using the standard pathwise definition for the stochastic integral of simple processes. It follows that $\tau^1(X^1)$ is also Borel by composition. 
Remark then that the map $(p,\omega,(\u_j)_{j=0,...,m-1})\mapsto X^2$ is the composition of the Borel measurable maps:
\begin{itemize}
\item  $ \begin{matrix} \Delta(I)\times\Omega_{t,t'}\times U^m & \rightarrow & \Omega_{t,t'}\times U^m \times C([t,t'],\R^{|I|})\times ([t,t']\cup\{+\infty\}) \\ 
(p,\omega,(\u_j)_{j=0,...,m-1}) & \mapsto & (\omega,(\u_j)_{j=0,...,m-1},X^1,\tau^1(X^1))
\end{matrix}$

\item $\begin{matrix}  \Omega_{t,t'}\times U^m \times C([t,t'],\R^{|I|})\times ([t,t']\cup\{+\infty\}) & \rightarrow & C([t,t'],\R^{|I|}) \\
(x,\omega,(\u_j)_{j=0,...,m-1},\delta) & \mapsto &\left(x(s \wedge \delta)+\ind_{\delta \leq s}\int_{s}^{\delta}P_{x(\delta)} \u(r)dB^1_r(\omega) \right)_{s\in[t,t']}
\end{matrix}$
\end{itemize}
The measurability of $(p,\omega,\u)\mapsto X^k(\omega)$ for $k\geq 3$ and finally from $\Phi^X_{t,t'}$ follows by induction.

\pa
To conclude the first point of the lemma, note that the construction coincides with the one described in Proposition \ref{prop1} for simple processes with $t'=T$, so that for all $u\in \UR^s(t)$, we have with $\P_t$-probability $1$:
\[ X^{t,p,u}(\omega)=\Phi^X_{t,T}(p,\omega,u(\omega)).\]
Since our definition of simple processes uses the raw filtration, it follows that  $\Phi^X_{t,T}(p,\omega,u(\omega))$ is a solution to \eqref{sdeX} which is adapted to the raw filtration of $B$. 
\pa
Let us prove the second part of the lemma. Let $\u\in U^{\pi}_{t}$ and for $\omega \in \Omega_{t}$, let us denote:
\[\omega_1=\omega|_{[t,t+h]} \in \Omega_{t,t+h},\; \omega_2=\omega|_{[t+h,T]}-\omega(t+h)\in \Omega_{t+h}.\]
\[ \u_1=\u|_{[t,t+h]} \in U^{s}_{t,t+h}, \;\u_2=\u|_{[t+h,T]} \in U^{s}_{t+h}.\]
At first, let $s\in [t,t+h]$, and recall that $\Phi^X_{t,T}(p,\omega,\u)(s)=X^{|I|}_s$ where  
\[ X^1_s = p+\int_t^s T_p \u(r)d B^1_r(\omega)\]
and for $k\geq 2$
\[ X^{k+1}_s= X^k_{s\wedge\tau^k(X^k)}  +\int_{\tau^k(X^k)}^{s\vee \tau^k(X^k)} P_{X^k_{\tau^k(X^k)}}\u(r) dB^1_r(\omega),\]
with $\tau^k(x)= \inf \{ s \in [t,T] \,|\, | \{ i \in I | x^i(s) \neq 0 \}| \leq |I|-k\}$.
Similarly, $\Phi^X_{t,t+h}(p,\omega_1,\u_1)=\tilde{X}^{|I|}_s$ where  
\[ \tilde{X}^1_s = p+\int_t^s T_p \u_1(r)d B^1_r(\omega_1)\]
and for $k\geq 2$
\[ \tilde{X}^{k+1}_s= \tilde{X}^k_{s\wedge \tilde{\tau}^k(\tilde{X}^k)}  +\int_{\tilde{\tau}^k(\tilde{X}^k)}^{s\vee \tilde{\tau}^k(\tilde{X}^k)} P_{\tilde{X}^k_{\tilde{\tau}^k(\tilde{X}^k)}}\u_1(r) dB^1_r(\omega_1),\]
with $\tilde{\tau}^k(x)= \inf \{ s \in [t,t+h] \,|\, | \{ i \in I | x^i(s) \neq 0 \}| \leq |I|-k\}$. By construction $X^1=\tilde{X}^1$ on $[t,t+h]$, and therefore $\tilde{\tau}^1(\tilde{X}^1)=\tau^1(X^1)$ if $\tau^1(X^1)\leq t+h$ and $\tilde{\tau}^1(\tilde{X}^1)=+\infty$ otherwise. It implies in turn that $X^2=\tilde{X}^2$ on $[t,t+h]$ and therefore that $\tilde{\tau}^2(\tilde{X}^2)=\tau^2(X^2)$ if $\tau^2(X^2)\leq t+h$ and $\tilde{\tau}^2(\tilde{X}^2)=+\infty$ otherwise. The equality ${X}^{|I|}_s=\tilde{X}^{|I|}_s$ follows by induction on $k$. 
\pa
Let us now assume that $s\in (t+h,T]$. Let $\hat{p}=X^{|I|}_{t+h}=\Phi_{t,T}^X(p,\omega,\u)(t+h)$ and let $k^*=|I|-|S(\hat{p})|$ (recall that $S(x)=\{ i\in I, x^i\neq 0\}$). Using the convention $\tau^0=t$ and $X^0=p$, we have by construction $\tau^{k^*}(X^{k^*})\leq t+h$ and $P_{\hat{p}}=P_{X^{k^*}_{\tau^{k^*}(X^{k^*})}}$. It follows that
\[ X^{k^*+1}_s= \hat{p}+ \int_{t+h}^s P_{\hat{p}}\u(r)dB^1_r(\omega).\]
On the other hand, we have $\Phi^X_{t+h,T}(\hat{p},\omega_2,\u_2)(s)=\hat{X}^{|I|}_s$ where 
\[ \hat{X}^1_s = \hat{p}+\int_t^s P_{\hat p} \u_2(r)d B^1_r(\omega_2)\]
and for $k\geq 2$
\[ \hat{X}^{k+1}_s= \hat{X}^k_{s\wedge \hat{\tau}^k(\hat{X}^k)}  +\int_{\hat{\tau}^k(\hat{X}^k)}^{s\vee \hat{\tau}^k(\hat{X}^k)} P_{\hat{X}^k_{\hat{\tau}^k(\hat{X}^k)}}\u_2(r) dB^1_r(\omega_2),\]
where $\hat{\tau}^k(x)= \inf \{ s \in [t+h,T] \,|\, | \{ i \in I | x^i(s) \neq 0 \}| \leq |I|-k\}$. By induction, one checks easily that for all $k \leq k^*$, we have $\hat{X}^k_{t+h}=\hat{p}$ and $\hat{\tau}^k(\hat{X}^k))=t+h$ so that 
\[  \hat{X}^{k^*+1}_s= \hat{p} +\int_{t+h}^{s} P_{\hat{p}}\u_2(r) dB^1_r(\omega_2).\]
We deduce that $X^{k^*+1}=\hat{X}^{k^*+1}$ on $[t+h,T]$ and therefore $\hat{\tau}^{k^*+1}(\hat{X}^{k^*+1})=\tau^{k^*+1}(X^{k^*+1})$. It implies in turn that $X^{k^*+2}=\hat{X}^{k^*+2}$ on $[t+h,T]$ and therefore $\hat{\tau}^{k^*+2}(\hat{X}^{k^*+2})=\tau^{k^*+2}(X^{k^*+2})$. The equality ${X}^{|I|}_s=\hat{X}^{|I|}_s$ follows by induction.

\end{proof}

\begin{Definition}
Let $\;\UR^{c}(t)$ denote the set of simple controls that are continuous with respect to $\omega$, meaning that there exist $t=t_0<...<t_m=T$ with $t_1,...,t_{m-1}\in \Q$  and continuous maps $g_j$ from $\Omega_{t,t_j}$ to $U$ such that
\begin{equation}
\label{u}
\forall s\in [t,T), \; u(\omega, s)=\sum_{j=0}^{m-1}\ind_{[t_j,t_{j+1})}(s)g_j(\omega|_{[t,t_j]}).
\end{equation}
\end{Definition}


%

\begin{Lemma}
\label{Vc}
Let $p \in \Delta(I)$, $u \in \UR^s(t)$ and $\beta \in \BR(t)$. Then, for all $\varepsilon$ there exists $\tilde{u} \in \UR^c(t)$ such that 
\[ \E_t\left[\sup_{s \in [t,T]}\left(|X^{t,p,u}_s-X^{t,p,\tilde{u}}_s|^2+|Y^{t,q,\beta(u)}_s-Y^{t,q,\beta(\tilde{u})}_s|^2\right) \right] \leq \varepsilon.\]
\end{Lemma}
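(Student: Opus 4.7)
The plan is to approximate the Borel maps that define $u$ by continuous ones via Lusin's theorem, producing a control $\tilde u \in \UR^c(t)$ that coincides pathwise with $u$ on an event of arbitrarily large probability, and then to exploit the pathwise construction of the SDEs given by Lemma \ref{SDE-borel} to deduce that the corresponding $X$- and $Y$-processes coincide on that event. The striking feature of this route is that it sidesteps any SDE-stability analysis with respect to $u$, which would be awkward given the discontinuity of the coefficient $\sigma(x,u) = P_x u$ in the variable $x$.

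Write $u$ as in \eqref{u} with grid $t = t_0 < \ldots < t_m = T$ and Borel maps $g_j : \Omega_{t, t_j} \to U = \R^{|I\times I|}$. For a fixed $\delta > 0$ and each $j$, Lusin's theorem applied to the Polish space $(\Omega_{t, t_j}, \P_{t, t_j})$ furnishes a compact set $K_j \subset \Omega_{t, t_j}$ with $\P_{t, t_j}(K_j^c) < \delta$ on which $g_j$ is continuous. Since $K_j$ is closed and the target is a finite-dimensional normed space, the Tietze--Dugundji extension theorem provides a continuous map $\tilde g_j : \Omega_{t, t_j} \to \R^{|I\times I|}$ that coincides with $g_j$ on $K_j$. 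Substituting $\tilde g_j$ for $g_j$ in \eqref{u}, while keeping the same grid, produces an element $\tilde u \in \UR^c(t)$.

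Set $A := \bigcap_{j=0}^{m-1} \{\omega \in \Omega_t : \omega|_{[t, t_j]} \in K_j\}$, so that $\P_t(A^c) \leq m\delta$ by a union bound, and observe that $u(\omega, \cdot) = \tilde u(\omega, \cdot)$ for every $\omega \in A$. Up to a $\P_t$-null set, Lemma \ref{SDE-borel} expresses both $X^{t,p,u}_\cdot$ and $X^{t,p,\tilde u}_\cdot$ as $\Phi^X_{t,T}(p, \omega, \cdot)$ evaluated at $u(\omega)$ and $\tilde u(\omega)$ respectively, so these trajectories coincide on $A$. Since the strategy $\beta$ is a Borel function of $(\omega, \u)$, we also have $\beta(u)(\omega) = \beta(\tilde u)(\omega)$ on $A$, and a second application of Lemma \ref{SDE-borel} yields $Y^{t, q, \beta(u)}_\cdot = Y^{t, q, \beta(\tilde u)}_\cdot$ on $A$. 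Because the processes live in the simplices, the integrand in the target inequality is bounded by an absolute constant $\kappa$ (depending only on $|I|, |J|$) on $A^c$ and vanishes on $A$, yielding
\[
\E_t\!\left[\sup_{s\in[t,T]}\!\left(|X^{t,p,u}_s - X^{t,p,\tilde u}_s|^2 + |Y^{t,q,\beta(u)}_s - Y^{t,q,\beta(\tilde u)}_s|^2\right)\right] \leq \kappa\, \P_t(A^c) \leq \kappa m \delta,
\]
so that choosing $\delta = \varepsilon/(\kappa m)$ concludes the proof.

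The only nontrivial ingredient is the combination of Lusin's theorem with a vector-valued continuous extension; once these are in place the argument is almost combinatorial and does not interact with the fine structure of the SDEs. In particular, no stability estimate for \eqref{sdeX}--\eqref{sdeY} with respect to perturbations of the control is invoked, which is crucial because $\sigma$ fails to be continuous in $x$.
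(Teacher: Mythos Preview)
Your proof is correct and follows essentially the same route as the paper: apply Lusin's theorem to each $g_j$ to obtain compact sets $K_j$ on which $g_j$ is continuous, extend continuously via Tietze, and use that the resulting $\tilde u$ coincides with $u$ on the intersection event so that the bounded $X$- and $Y$-processes agree there. The only cosmetic difference is that you invoke Lemma~\ref{SDE-borel} explicitly to justify pathwise coincidence, whereas the paper states this directly.
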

\begin{proof}
For any $u \in \UR^s(t)$, there exist $t=t_0<....<t_m=T$ and measurable maps $g_j$ from $\Omega_{t,t_j}$ to $U$ as defined in (\ref{u}).
Using inner regularity of measures and Lusin's theorem, for each $g_j$, there exists a compact set $K_j\subset \Omega_{t,t_j}$ such that $\P_{t,t_j}(K_j) \geq 1- \frac{\varepsilon}{8m}$ and such that the restriction of $g_j$ to $K_j$ is continuous. Let us consider some continuous map $f_j$ on $\Omega_{t,t_j}$ which coincides with $g_j$ on $K_j$ (Tietze extension theorem). Define $\tilde{u}$ by 
\[\tilde{u}(\omega, s)=\sum_{j=0}^{m-1}\ind_{[t_j,t_{j+1})}(s)f_j(\omega|_{[t,t_j]}).\]
The conclusion follows easily since both processes are bounded and coincide on the set 
\[ K=\{ \omega \in \Omega_t \,|\, \forall j, \; \omega|_{[t,t_j]} \in K_j \},\]
and $\P_t(K)\geq 1-\frac{\varepsilon}{8}$. 
\end{proof}

\pa
Let us now state the measurable selection result used in the proof of Proposition \ref{propconcave}. 
\begin{Theorem}
\label{selmeas}
Let $\varepsilon>0$, $(x,y) \in \Delta(I)\times \Delta(J)$, a process $u^1 \in \UR^s(t)$ and a strategy $\beta \in \BR(t)$.
Let $h>0$ such that $t+h \in (t,T)\cap \Q$ and smaller than the first point in the grid of the strategy $\beta$ so that $v:=\beta(u^1)|_{[t,t+h]}$ is a deterministic process independent of $u^1$. Recall the identification $\Omega_t=\Omega_{t,t+h}\times\Omega_{t+h}$ with $\omega=\omega_1\otimes\omega_2$, where $\omega_1:=\omega|_{[t,t+h]}$ and $\omega_2:=\omega|_{[t+h,T]}-\omega(t+h)$ and define the continuation strategy $\beta_{\omega_1}$ as in the proof of  Proposition \ref{propconcave}.
\pa
There exists a control $u \in \UR^s(t)$, which is equal to $u^1$ on $[t,t+h)$, and such that for all $\omega_1$ in a set of probability at least $1-\varepsilon$, $u^2(\omega_1)(\cdot):=u|_{[t+h,T]}(\omega_1\otimes \cdot)$ is an $\varepsilon$-best reply to the continuation strategy $\beta_{\omega_1}$ in the game starting at time $t+h$ with initial conditions $(x,y)$.
\end{Theorem}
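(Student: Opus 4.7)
The plan is to cast this as a measurable selection problem and resolve it via a first-hit argument on a countable family of candidate controls. For each $\omega_1 \in \Omega_{t,t+h}$, the continuation $\beta_{\omega_1}$ is a well-defined element of $\BR(t+h)$, so
\[ W(\omega_1) := \inf_{u^2 \in \UR^s(t+h)} J(t+h, x, y, u^2, \beta_{\omega_1}(u^2)) \]
is well defined, and by Lemma \ref{Vc} the infimum may be restricted to continuous simple controls $u^2 \in \UR^c(t+h)$.

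First, I would construct a countable family $\{u_n\}_{n \in \N} \subset \UR^c(t+h)$ satisfying $W(\omega_1) = \inf_n J(t+h, x, y, u_n, \beta_{\omega_1}(u_n))$ for every $\omega_1$. For each rational grid $\pi \in \Pi_{t+h, T}$ and each integer $N$, the continuous simple controls on $\pi$ whose coefficient maps $g_j : \Omega_{t+h, t_j} \to U$ are continuous and bounded by $N$ form a separable set under the uniform-on-compacts topology on $C(\Omega_{t+h, t_j}, U)$; taking a countable dense subfamily and unioning over $(\pi, N)$ produces $\{u_n\}$. The density property relies on the stability of the SDEs $X^{t+h, x, u^2}$ and $Y^{t+h, y, \beta_{\omega_1}(u^2)}$ with respect to the control $u^2$, the latter exploiting the piecewise-constant structure of $\beta_{\omega_1}$ in the $\omega_2$-variable, combined with a truncation replacing arbitrary controls by those bounded by $N$.

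For each fixed $u_n$, the identity $\beta_{\omega_1}(u_n)(\omega_2) = \beta(\omega_1 \otimes \omega_2, u^1(\omega_1) \otimes u_n)|_{[t+h,T]}$ and Borel measurability of $\beta$ and $u^1$, together with Lemma \ref{SDE-borel}, render $\omega_1 \mapsto J(t+h, x, y, u_n, \beta_{\omega_1}(u_n))$ Borel. Thus $W$ is Borel as a countable infimum and
\[ N(\omega_1) := \min\left\{n \geq 1 : J(t+h, x, y, u_n, \beta_{\omega_1}(u_n)) \leq W(\omega_1) + \varepsilon/2\right\} \]
is a Borel $\N$-valued function. To secure a single deterministic grid for $u^2$, pick $N^* \in \N$ with $\P_{t,t+h}[N \leq N^*] \geq 1 - \varepsilon$ and let $\pi^*$ be the common rational refinement of the grids of $u_1, \ldots, u_{N^*}$; setting $u^2(\omega_1) := u_{N(\omega_1)}$ (viewed in $\UR^{s, \pi^*}(t+h)$ via refinement) on $\{N \leq N^*\}$ and $u^2(\omega_1) \equiv 0$ otherwise, and concatenating with $u^1$ on $[t, t+h)$, yields the required $u \in \UR^s(t)$, which agrees with $u^1$ on $[t, t+h)$ and whose restriction on $[t+h, T]$ is $\varepsilon/2$-optimal (hence $\varepsilon$-optimal) on an event of probability at least $1 - \varepsilon$. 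The main obstacle is the pointwise-in-$\omega_1$ density claim $W(\omega_1) = \inf_n J(t+h, x, y, u_n, \beta_{\omega_1}(u_n))$: since $U$ is unbounded and the maps $\omega_1 \mapsto \beta_{\omega_1}$ are merely Borel, the approximation of an arbitrary $\varepsilon/4$-optimal control by elements of $\{u_n\}$ must be controlled uniformly in $\omega_1$, which typically requires a diagonal argument combining truncation of the control size, localization on compact subsets of $\Omega_{t+h, t_j}$ via inner regularity, and refinement of the time grid.
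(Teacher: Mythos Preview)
Your first-hit approach is appealing, but the density claim at its core --- that a \emph{single} countable family $\{u_n\}$ achieves $W(\omega_1)=\inf_n J(t+h,x,y,u_n,\beta_{\omega_1}(u_n))$ for every $\omega_1$ --- cannot be established by the separability/stability argument you propose. The obstruction is not the uniformity in $\omega_1$ that you flag at the end, but the dependence of $\beta_{\omega_1}$ on the control variable: by Definition~\ref{admissible}, the coefficients $\beta^j$ of the strategy $\beta$ are merely \emph{Borel} maps of the restricted trajectory $\u|_{[t,t_j]}$, so even for a fixed $\omega_1$, convergence $u_n\to u$ (uniform on compacts, or in any metric making the control space Polish) does \emph{not} force $\beta_{\omega_1}(u_n)(\omega_2)\to\beta_{\omega_1}(u)(\omega_2)$. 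Hence $J(t+h,x,y,\cdot,\beta_{\omega_1}(\cdot))$ is not continuous in $u^2$, and density of $\{u_n\}$ among continuous simple controls does not propagate to density of the corresponding costs. The ingredients you list (truncation, inner regularity on $\Omega_{t+h,t_j}$, grid refinement) can make $X$ stable in $u^2$, but they cannot smooth the Borel map $\u\mapsto\beta^j(\cdot,\u)$ that feeds into $Y$.

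The paper sidesteps this. For each rational grid $\pi$, it restricts to controls $\tilde u$ whose coefficients lie in the Polish space $\prod_j C(K^\pi_j,U)$, checks that $(\omega_1,(f_j)_j)\mapsto J(t+h,x,y,\tilde u,\beta_{\omega_1}(\tilde u))$ is jointly Borel, and invokes Von~Neumann's measurable selection theorem to obtain, for every $\omega_1$, a universally measurable $\varepsilon/3$-best reply $\tilde u^{2,\pi}(\omega_1)$ --- with no continuity in $u^2$ required. Only \emph{after} this selection does a countable argument enter, and it runs over the grids $\pi\in\Pi_{t+h}$ rather than over controls; the infimum over $\pi$ is then handled by a countable partition followed by a truncation to finitely many cells, giving the set of probability at least $1-\varepsilon$. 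Your construction can be repaired by introducing Von~Neumann's theorem at the grid level; without it, the density step does not go through.
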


\begin{proof}
Given $\pi\in \Pi_{t+h}$, let $\UR^\pi(t+h)$ denote the set of simple controls which are piecewise constant with grid $\pi$. We deduce from Lemma \ref{Vc} that, for all $\beta\in\BR(t+h)$,
\begin{multline}\label{egal-inf-continu} 
\inf_{u \in \UR^s(t+h)} J(t+h,p,q,u,\beta(u))=\inf_{\pi \in \Pi_{t+h}}\inf_{u \in \UR^\pi(t+h)} J(t+h,p,q,u,\beta(u)) \\=\inf_{\pi \in \Pi_{t+h}} \inf_{u \in \UR^c(t+h)\cap \UR^\pi(t+h)}J(t+h,p,q,u,\beta(u)).
\end{multline}
Let us fix $\pi=\{t+h=t_0<...<t_m=T\} \in \Pi_{t+h}$.
For all $j=0,...,m-1$, let $K^\pi_j$ be a compact subset of  $\Omega_{t+h,t_j}$ such that $\P_{t+h,t_j}(K^\pi_j) \geq 1- \tfrac{\varepsilon}{6CmT}$ so that $\P_{t+h}(\forall j=0,...,m-1, \, \omega|_{[t+h,t_j]} \in K^\pi_j) \geq 1- \tfrac{\varepsilon}{6CT}$.    
 Given $u \in \UR^c(t+h)\cap \UR^{\pi}(t+h)$, if we define $\tilde{u} \in \UR^\pi(t+h)$ by replacing the continuous maps $g_j$ by $f_j= g_j \ind_{K^\pi_j}$ for $j=0,...,m-1$, then for all $(p,q,\beta)$, we have
\[ | J(t+h,p,q,u,\beta(u)) - J(t+h,p,q,\tilde{u},\beta(\tilde{u})) | \leq \frac{\varepsilon}{3}.\]
As a consequence, for all $(p,q,\beta)$, we have
\begin{equation}
 \inf_{u \in \UR^c(t+h)\cap \UR^{\pi}(t+h)}J(t+h,p,q,\tilde{u},\beta(\tilde{u})) \leq \inf_{u \in\UR^c(t+h)\cap \UR^{\pi}(t+h)}J(t+h,p,q,u,\beta(u)) +\frac{\varepsilon}{3}.
\end{equation}
We can identify the set of processes $\tilde{u}$ when $u$ ranges through $\UR^c(t+h)\cap \UR^{\pi}(t+h)$ with $\prod_{j=0}^{m-1} C(K^\pi_j,U)$ through the map 
\[ \left(\prod_{j=0}^{m-1} C(K^\pi_j,U) \right) \times \Omega_{t+h} \ni (f_j)_{j=0,...,m-1}\mapsto \tilde{u} \in \UR^{\pi}(t+h),\]
which is Borel-measurable with $\tilde{u}(\omega_2)(s)=\sum_{j=0}^{m-1}\ind_{[t_j,t_{j+1})}(s) f_j(\omega_2|_{[t+h,t_j]})\ind_{K^\pi_j}(\omega_2|_{[t+h,t_j]})$. Using this identification, the previous inequality becomes
\begin{equation} \label{ineq-tilde} \inf_{(f_j)_{j=0,...,m-1} \in \left(\prod_{j=0}^{m-1} C(K^\pi_j,U) \right))}J(t+h,p,q,\tilde{u},\beta(\tilde{u}))\leq \inf_{u \in\UR^c(t+h)\cap \UR^{\pi}(t+h)}J(t+h,p,q,u,\beta(u)) +\frac{\varepsilon}{3} .
\end{equation}
\pa
By construction, the map 
\[  \Omega_{t,t+h}\times \Omega_{t+h} \times U^s_{t+h}\ni (\omega_1,\omega_2,\u_2) \mapsto \beta_{\omega_1}(\omega_2,\u_2)=\beta(\omega_1\otimes \omega_2, u^1(\omega_1)\otimes \u_2)|_{[t+h,T]} \in V^s_{t+h},\]
is Borel measurable. By composition and using Lemma \ref{SDE-borel}, it follows that 
\[  (\omega_1,\omega_2,(f_j)_{j=0,...,m-1}) \mapsto  \int_{t+h}^T H(s,\Phi^X_{t+h,T}(x,\omega_2,\tilde{u}(\omega_2))(s),\Phi^Y_{t+h,T}(y,\omega_2,\beta_{\omega_1}(\tilde{u}(\omega_2)))(s)) ds
,\]
is Borel-measurable from $\Omega_{t,t+h}\times \Omega_{t+h} \times \prod_{j=0}^{m-1} C(K^\pi_j,U)$ to $\R$. Applying Fubini's theorem, and using that
\begin{multline*}
 J(t+h,x,y,\tilde{u},\beta_{\omega_1}(\tilde{u})) \\
 =\int_{\Omega_{t+h}} \left( \int_{t+h}^T H(s,\Phi^X_{t+h,T}(x,\omega_2,\tilde{u}(\omega_2))(s),\Phi^Y_{t+h,T}(y,\omega_2,\beta_{\omega_1}(\tilde{u}(\omega_2)))(s)) ds \right) d\P_{t+h}(\omega_2),
\end{multline*}
we deduce finally that the map
\[ \Omega_{t,t+h}\times \prod_{j=0}^{m-1} C(K^\pi_j,U) \ni  (\omega_1,(f_j)_{j=0,...,m-1}) \mapsto J(t+h,x,y,\tilde{u},\beta_{\omega_1}(\tilde{u})),\]
is Borel measurable.
As this map is defined on a product of Polish spaces, we may apply Von Neumann's selection theorem (see Proposition 7.50 p.184 in \cite{bertsekas}, having in mind that Borel maps are lower-semi analytic and that analytically measurable maps are universally measurable)) to deduce the existence of a universally measurable selection $\tilde{u}^{2,\pi}$
\[ \begin{matrix} 
\Omega_{t,t+h}& \rightarrow &\prod_{j=0}^{m-1} C(K^\pi_j,U) \\ \omega_1 & \mapsto & \tilde{u}^{2,\pi}(\omega_1)
\end{matrix} ,\]
which is an $\varepsilon/3$-optimal best reply to $\beta_{\omega_1}$ in the considered class, i.e.:
\begin{equation}\label{ineq-selection}
 J(t+h,x,y,\tilde{u}^{2,\pi}(\omega_1),\beta_{\omega_1}(\tilde{u}^{2,\pi}(\omega_1)) \leq \inf_{(f_j)_{j=0,...,m-1} \in  \prod_{j=0}^{m-1} C(K^\pi_j,U)} J(t+h,x,y,\tilde{u},\beta_{\omega_1}(\tilde{u}))+ \frac{\varepsilon}{3}.
\end{equation}
This map is $\P_{t,t+h}$- almost surely equal to a Borel map (see Lemma 7.27 p.173 in \cite{bertsekas}) which, with a slight abuse of notation, will be also denoted $\tilde u^{2,\pi}(\omega_1)$.
\pa
Consider now the countable family of Borel maps $F_{\pi}$ indexed by $\pi \in \Pi_{t+h}$ defined by:
\[ \Omega_{t,t+h}\ni\omega_1 \mapsto F_\pi(\omega_1):=J(t+h,x,y,\tilde{u}^{2,\pi}(\omega_1),\beta_{\omega_1}(\tilde{u}^{2,\pi}(\omega_1))).\]
There exists a countable measurable partition $(D_\pi)_{\pi \in \Pi_{t+h}}$ of $\Omega_{t,t+h}$ such that the map $F=\sum_\pi F_\pi \ind_{D_\pi}$ verifies
\[ F \leq \inf_{\pi \in \Pi_{t+h}} F_\pi + \frac{\varepsilon}{3}, \]
and note that \eqref{egal-inf-continu}, \eqref{ineq-tilde} and \eqref{ineq-selection} imply
\begin{equation} \label{best-reply} F(\omega_1) \leq \inf_{\pi \in \Pi_{t+h}} F_\pi(\omega_1)  + \frac{\varepsilon}{3}\leq \inf_{u \in \UR^s_{t+h}} J(t+h,x,y,u,\beta_{\omega_1}(u))+\varepsilon.
\end{equation}
Let $(D_{\pi_1},...,D_{\pi_n},R)$ be a finite partition of $\Omega_{t,t+h}$ such that $\P_{t,t+h}(R) \leq \varepsilon$ and let us define $u$ by:
\[
 u_s(\omega)=u_s(\omega_1\otimes\omega_2)=\left\{
\begin{array}{ll}
u^1_s(\omega_1),&\mbox{ if }s\in[t,t+h),\\
u^2_s(\omega_1,\omega_2),& \mbox{ if }s\in[t+h,T],
\end{array}\right.\]
with $u^2(\omega_1,\omega_2)=\sum_{i=1}^n \tilde{u}^{2,\pi_i}(\omega_1)(\omega_2) \ind_{D_{\pi_i}}(\omega_1)$.
\pa
In order to show that the control $u$ is admissible, let us consider a time grid $\pi=\{ t=t_0<\ldots<t_m=T\}$ which coincides on $[t,t+h)$ with the time grid associated to $u^1$ and contains all the points of the partitions $\pi_1,\ldots,\pi_n$. Let us denote $\pi_i=\{t+h=t^i_0<...<t^i_{m_i}=T\}$ for $i=1,...,n$, and given any interval $[t_j,t_{j+1})$ of $\pi$, let $k(i,j)$ be the unique integer such that $ [t_j,t_{j+1}) \subset [t^i_{k(i,j)},t^i_{k(i,j)+1})$. Then $u$ admits the decomposition
\[ u(\omega,s)=\sum_{j=0}^{m-1}\ind_{[t_j,t_{j+1})}(s)g_j(\omega|_{[t,t_j]}),\]
which coincides with the decomposition of $u^1$ on $[t,t+h)$, and, for $s\in[t_j,t_{j+1})$ with $t+h\leq t_j$, the coefficient $g_j$ can be detailed  as follows:
\[ g_j(\omega|_{[t,t_j]})=\sum_{i=1}^n \ind_{D_{\pi_i}}(\omega_1)g_{i,k(i,j)}(\omega_1)(\omega_2|_{[t+h,t^i_{k(i,j)}]}),\]
where the term $g_{i,k(i,j)}$ appears in the decomposition of $\tilde{u}^{2,\pi_i}(\omega_1)$:
\[ \tilde{u}^{2,\pi_i}(\omega_1)(\omega_2,s)=\sum_{k=0}^{m_i-1}\ind_{[t^i_k,t^i_{k+1})}(s)g_{i,k}(\omega_1)(\omega_2|_{[t+h,t^i_k]}).\]
To conclude, note that $u$ coincides with $u^1$ on $[t,t+h)$ and that by construction for all $\omega_1 \notin R$, thanks to \eqref{best-reply}, $u^2(\omega_1,.)$ is an $\varepsilon$-best reply to the continuation strategy $\beta_{\omega_1}$ in the game starting at time $t+h$ with initial conditions $(x,y)$.
\end{proof}


\noindent {\bf Acknowledgements:} 
We like to thank the referees for their helpful comments and suggestions.

\end{document}